\newtheorem{theorem}{Theorem}
\newtheorem{corollary}[theorem]{Corollary}
\newtheorem{lemma}[theorem]{Lemma}
\newtheorem{proposition}[theorem]{Proposition}
\theoremstyle{definition}
\newtheorem{example}[theorem]{Example}
\newtheorem{definition}[theorem]{Definition}
\newtheorem{remark}[theorem]{Remark}
\newcommand{\rank}{\operatorname{rk}}
\newcommand{\MSC}{\noindent \textit{2010 MSC:} }
\begin{document}

\title{Dissonant points and the region of influence of non-saddle sets  
\footnote{The authors are supported by MINECO (MTM2012-30719). The first author is also supported by the FPI grant BES-2013-062675.}}

\author{H\'ector Barge}
\address{E.T.S. Ingenieros inform\'{a}ticos, Universidad Polit\'{e}cnica de Madrid,  Madrid 28660, Spain}
\email{h.barge@upm.es}

\author{Jos\'e M.R. Sanjurjo}
\address{Facultad de C. C. Matem\'aticas, Universidad Complutense de Madrid, Madrid 28040, Spain}
\email{email:jose\_sanjurjo@mat.ucm.es}

\maketitle


\begin{abstract}
 The aim of this paper is to study dynamical and topological properties of a
flow in the region of influence of an isolated non-saddle set. We see, in
particular, that some topological conditions are sufficient to guarantee
that these sets are attractors or repellers. We study in detail the
existence of dissonant points of the flow, which play a key role in the
description of the region of influence of a non-saddle set. These points are
responsible for much of the dynamical and topological complexity of the
system. We also study non-saddle sets from the point of view of the Conley
index theory and consider, among other things, the case of flows on
manifolds with trivial first cohomology group. For flows on these manifolds,
dynamical robustness is equivalent to topological robustness. We carry out a
particular study of 2-dimensional flows and give a topological condition
which detects the existence of dissonant points for flows on surfaces. We
also prove that isolated invariant continua of planar flows with global
region of influence are necessarily attractors or repellers.
\end{abstract}




\MSC{37C75, 37B30.}
\vspace{3pt}

\keywords{Non-saddle set, region of influence, dissonant point, Conley index, homoclinic orbit, robustness.}

\section{Introduction}

This paper is devoted to the study of a flow near a compact invariant set. This was one of the first classical subjects dealt with by H. Poincar\'{e}, I.
Bendixson, A. Andronov and S. Lefschetz at the beginning of the qualitative theory of {di\-ffe\-ren\-tial} equations, with further contributions by authors such as D.M. Grobman, P. Hartman, J.K. Hale and A. Stokes and many others. T. Ura and I. Kimura were able to provide in \cite{Ura-Kimura} a rather general description. Wazewski's and Conley's index theories can be considered an important chapter of this general topic since they deal essentially with local properties of the flow near (isolated) {in\-va\-riant} sets. The local character of the flow is, to a large extent, determined by the perturbation properties of attraction and repulsion induced by the invariant set in its vicinity and, for this reason, we study in detail its region of influence. Isolated non-saddle sets are important in this context since they have an open region of influence. Moreover, this region has a particularly nice structure near the compactum since it is organized into a purely attracted part and a purely repelled part, where the properties of attraction and repulsion are uniform. In other words, isolated non-saddle sets have isolating blocks consisting of the asymptotic (positive and negative) parts only. Saddle and non-saddle sets were first
studied by N.P. Bhatia \cite{Bhatia} and by T. Ura \cite{Ura} but, according to Ura, they were introduced before by P. Seibert in an oral communication. The
theory of non-saddle sets can be considered as a general theory of stability and attraction, which extends the classical one and encompasses recent
developments such as the theory of unstable attractors with no external explosions \cite{Athanassopoulos, Moron, Sanchez-Gabites Transactions}.

We study in the paper the topological aspects of non-saddle sets and of their region of influence. We see, in particular, that some topological
conditions are sufficient to guarantee that these sets are attractors or repellers. We examine the general structure of a flow having an isolated non-saddle
set. This structure can be considerably more complicated than the one for attractors and repellers and even than the one for unstable attractors with no external
explosions. We show that the reason for this complexity lies in the existence of dissonant points in the region of influence of the non-saddle set. We
illustrate this phenomenon with an example based on a saddle-node bifurcation in the torus. We identify topological conditions which prevent
the existence of dissonant points. For instance, flows on connected closed manifolds $M$ with trivial first cohomology group $H^{1}(M)$ do not have dissonant
points and the region of influence of the non-saddle set has a nice description. We also give some conditions, in terms of the Conley index of the invariant set, for it to be non-saddle. This result is a source of many interesting examples of flows on manifolds admitting all degrees of dynamical complexity. We carry out a particular study of 2-dimensional flows. We describe the structure of a class of non-saddle sets in the torus. We find a condition, involving the Euler characteristic of the region of
influence, to detect the existence of dissonant points of flows on surfaces. We prove that isolated invariant continua of planar flows with global
region of influence are attractors or repellers (in particular, they enjoy either positive or negative stability). Some results about existence of fixed points for planar flows are also established. Finally, we study the continuation properties of non-saddle sets for parametrized families of flows. We prove that, for flows on compact connected and oriented differentiable manifolds with trivial 1-dimensional cohomology, dynamical robustness is equivalent to topological robustness and, as a consequence, the preservation of non-saddleness can be detected by topological means. It can be seen that non-saddle sets are involved in generalized Andronov-Poincar\'{e}-Hopf bifurcations (in the sense of \cite{Sanjurjo JDE}) but this will be the subject of a different paper. Our point of view is mainly topological and has deep connections with the Conley index theory. We stress
the richness of the topological aspects of dynamical
systems and differential equations, often presenting unexpected and strange properties. As was remarked
by Kennedy and Yorke in \cite{Kennedy-Yorke} \textquotedblleft bizarre topology is natural in
dynamical systems.\textquotedblright 

Through the paper we shall consider flows $\varphi:M\times\mathbb{R}\to M$ defined on locally compact metric spaces. However, we will often focus on the particular cases of flows defined on locally compact ANRs and $n$-dimensional manifolds. A metrizable space is said to be an \emph{absolute neighborhood retract} (or $M\in ANR$) if for every {ho\-meo\-mor\-phism} $h$ mapping $M$ onto a closed subset $h(M)$ of a metrizable space $X$ there is a neighborhood $U$ of $h(M)$ in $X$ such that $h(M)$ is a retract of $U$. The class of ANRs includes a lot of nice spaces such as polyhedra, CW-complexes, manifolds, etc... Besides, open subsets of ANRs and retracts of ANRs are also ANRs. For more information about the theory of retracts see \cite{Hu}.

  A form of homotopy theory, namely \emph{shape theory}, which is the most {sui\-ta\-ble} for the study of global topological properties in dynamics, will be {occa\-sio\-na\-lly used}. Although a deep knowledge of shape theory is not necessary to understand the paper we {re\-co\-mmend} to the reader Borsuk's monography \cite{Borsuk} for an exhaustive treatment of the subject and \cite{Kapitanski, Robin y Salamon,Robinson Shape, Sanjurjo Transactions,  Sanjurjo London, Sanjurjo JMA, Sanjurjo LN, Wang-Li} for a concise introduction and some applications to dynamical systems. 

We make use of some notions of algebraic topology. A good reference for this material is the book of Spanier \cite{Spanier}. We will use the notation $H_*$ and $H^*$ for the singular homology and cohomology respectively and we will denote by $\check{H}^*$ the \v Cech cohomology, all of them with integer coefficients unless otherwise specified. We recall that \v Cech and singular cohomology theories agree on ANRs and, from this fact, combined with a simple argument involving the long exact sequences of \v Cech and singular cohomology of a pair, the natural homomorphism between both cohomologies and the five lemma, it can be proved that it also holds for pairs of ANRs. Another nice property of \v Cech cohomology is that it is a shape invariant. A recent paper showing some  applications of homological techniques to dynamical systems is \cite{Li}.

If a pair of spaces $(X,A)$ satisfies that its cohomology $\check{H}^k(X,A)$ is finitely generated for each $k$ and is non-zero only for a finite number of values of $k$, its \emph{Euler characteristic} is defined as
\[
\chi(X,A)=\sum_{k\geq 0} (-1)^k\rank \check{H}^k(X,A).
\]

A useful property of the Euler characteristic which can be found in \cite{Spanier} is that 
 \[
\chi(X)=\chi(X,A)+\chi(A). 
 \]
 
 A different Euler characteristic can be defined by using singular homology and cohomology (both produce the same result by the Universal Coefficients Theorem) in the same way. This singular Euler characteristic agrees with the Euler characteristic previously defined for ANRs and pairs of ANRs by the previous remarks.

The main reference for the elementary concepts of dynamical systems will be \cite{Bhatia-Szego} but we also recommend \cite{Robinson dynamics, Palis, Pilyugin}. In the sequel we shall use the notation $\gamma(x)$ for the \emph{trajectory} of the point $x$, i.e. $\gamma(x)=\{xt\mid t\in\mathbb{R}\}$. Similarly for the \emph{positive semi-trajectory} $\gamma^+(x)=\{xt\mid t\in\mathbb{R}^+\}$ and the \emph{negative-semitrajectory} $\gamma^-(x)=\{xt\mid t\in\mathbb{R}^-\}$. By the \emph{omega-limit} of a point $x$ we understand the set $\omega(x)=\bigcap_{t>0}\overline{x[t,\infty)}$ while the \emph{negative omega-limit} is the set $\omega^*(x)=\bigcap_{t<0}\overline{x(-\infty,t]}$. On the other hand, the \emph{positive prolongational limit set} of a point $x$ is the set
\[
 J^+(x)=\bigcap_{U\in \mathcal{E}(x),t>0}\overline{U[t,\infty)},
\]
where $\mathcal{E}(x)$ denotes the system of neighborhoods of the point $x$. The \emph{negative prolongational limit set} of a point $x$, $J^-(x)$ is defined in an analogous fashion. Besides, we will need to introduce the concept of \emph{two-sided prolongational limit set}
\begin{definition}
Given $x\in M$, the \emph{two-sided prolongational limit set} of $x$, is defined to be
\begin{align*}
J^*(x):=\{(y,z)\in M\times M \mid \mbox{there exist}\; x_n\to x,\;t_n\to\infty,\;s_n\to-\infty\;\\
\mbox{such that},\; x_nt_n\to y\;\mbox{and}\;x_ns_n\to z\}.
\end{align*}
\end{definition}

It is easy to see that $J^*$ is closed in $M\times M$ and  that $(yt,zs)\in J^*(x)$ for every $(y,z)\in J^*(x)$ and every $t,s\in \mathbb{R}$.

Another notion which will be useful is that of \emph{parallelizable flow}. A flow $\varphi:M\times\mathbb{R}\to M$ is said to be \emph{parallelizable} if there exists a subset $S\subset M$ such that the map $h:S\times\mathbb{R}\to M$ defined by $(x,t)\mapsto xt$ is a homeomorphism. The set $S$ is called a \emph{section} of $\varphi$ and a direct consequence of the definition is that it is a strong deformation retract of $M$.

An important class of invariant compacta is the so-called \emph{isolated invariant sets} (see \cite{Conley,Conley-Easton,Easton} for details). These are compact invariant sets $K$ which possess an \emph{isolating neighborhood}, i.e. a compact neighborhood $N$  such that $K$ is the maximal invariant set in $N$. 

 A special kind of isolating neighborhoods will be useful in the sequel, the so-called \emph{isolating blocks}, which have good topological properties. More precisely, an isolating block $N$ is an isolating neighborhood such that there are compact sets $N^i,N^o\subset\partial N$, called the entrance and the exit sets, satisfying
\begin{enumerate}
\item $\partial N=N^i\cup N^o$;
\item for each $x\in N^i$ there exists $\varepsilon>0$ such that $x[-\varepsilon,0)\subset M-N$ and for each $x\in N^o$ there exists $\delta>0$ such that $x(0,\delta]\subset M-N$;
\item for each $x\in\partial N-N^i$ there exists $\varepsilon>0$ such that $x[-\varepsilon,0)\subset \mathring{N}$ and for every $x\in\partial N-N^o$ there exists $\delta>0$ such that $x(0,\delta]\subset\mathring{N}$.
\end{enumerate} 

These blocks form a neighborhood basis of $K$ in $M$. If the flow is {diffe\-ren\-tia\-ble}, the isolating blocks can be chosen to be differentiable manifolds which contain $N^i$ and $N^o$ as submanifolds of their boundaries and such that $\partial N^i=\partial N^o=N^i\cap N^o$. In particular, for flows defined on $\mathbb{R}^2$, the exit set $N^o$ is a disjoint union of a finite number of intervals $J_1,\ldots,J_m $ and circumpherences $C_1,\ldots,C_n$ and the same is true for the entrance set $N^i$.

Given an isolating block $N$ of an isolated invariant set $K$, its \emph{positively invariant part} is the set $N^+=\{x\in N\mid \gamma^+(x)\subset N\}$. In an analogous way, the \emph{negatively invariant part} of $N$, is the set $N^-=\{x\in N\mid \gamma^-(x)\subset N\}$. It is clear that  $N^+$ and $N^-$ are, respectively, positively and negatively invariant closed subsets of $N$ (and hence compact) whose intersection is $K$.

Let $K$ be an isolated invariant set, its \emph{Conley index} $h^+(K)$ is defined as the pointed homotopy type of the topological space $(N/N^o,[N^o])$. It is also possible to define its \emph{negative Conley index} $h^-(K)$, which agrees with the Conley index of $K$ for the reverse flow, as the pointed homotopy type of the topological space $(N/N^i,[N^i])$. A weak version of the Conley index which will be useful for us is the \emph{cohomological index} defined as $CH_+^*(K)=\check{H}^*(h^+(K))$.  The \emph{negative cohomological index} $CH_-^*(K)$ is defined in an analogous fashion. Observe that, by the strong excision property of \v Cech cohomology, $CH_+^*(K)\cong\check{H}^*(N, N^o)$ and $CH_-^*(K)\cong\check{H}^*(N,N^i)$. Our main references for the Conley index theory are \cite{Conley, Conley-Zehnder, Salamon}. We also recommend the survey \cite{Izydorek} where some connections with classical Morse theory and Brouwer degree are stated.    

We will deal through the paper with a special class of invariant sets, the so-called \emph{non-saddle} sets \cite{Bhatia}. A compact invariant set $K$ is said to be  \emph{saddle} if it admits a neighborhood $U$ such that every neighborhood $V$ of $K$ contains a point $x\in V$ with $\gamma^+(x)\nsubseteq U$ and $\gamma^-(x)\nsubseteq U$. Otherwise we say that $K$ is \emph{non-saddle}. For instance, attractors (asymptotically stable sets), repellers (negatively asymptotically stable sets) and unstable attractors with no external explosions (see \cite{Athanassopoulos,Moron,Sanchez-Gabites Transactions}) are non-saddle sets. 

If $K$ is an invariant set, its stable manifold $\mathcal{A}(K)$ is the set of all points $x\in M$ such that $\emptyset\neq\omega(x)\subset K$. Similarly, the unstable manifold $\mathcal{R}(K)$ is the set of all points $x\in M$ such that $\emptyset\neq\omega^*(x)\subset K$ . The region of influence of an invariant set $K$ is the set $\mathcal{I}(K)=\mathcal{A}(K)\cup\mathcal{R}(K)$. 

An isolated non-saddle set is said to be \emph{simple} if $\mathcal{A}(K)\cap\mathcal{R}(K)=K$. It is well known that the inclusion $i:K\hookrightarrow\mathcal{A}(K)$ of an attractor in its basin of attraction is a shape equivalence, and the same is true for the inclusion $j:K'\hookrightarrow\mathcal{R}(K')$ of a repeller in its basin of repulsion \cite{Kapitanski,Sanjurjo LN}. By combining these two facts it is easy to see that if $K$ is simple, the inclusion $i:K\hookrightarrow \mathcal{I}(K)$ is a shape equivalence.

We will also make use of a classical result of C. Guti\'{e}rrez about smoothing of $2$-dimensional flows.

\begin{theorem}[Guti\'{e}rrez \cite{Gutierrez}] Let $\varphi :M\times \mathbb{R}\rightarrow M$ be a continuous flow on a compact $C^{\infty }$ 2-manifold $M$. Then there
exists a $C^{1}$ flow $\psi $ on $M$ which is topologically equivalent to $\varphi $. Furthermore, the following conditions are equivalent:
\begin{enumerate}
\item any minimal set of $\varphi $ is trivial;

\item $\varphi $ is topologically equivalent to a $C^{2}$ flow;

\item $\varphi $ is topologically equivalent to a $C^{\infty }$ flow.
\end{enumerate}
\end{theorem}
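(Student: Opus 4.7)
The statement splits into two essentially independent parts: (a) the existence of a topologically equivalent $C^{1}$ flow in the continuous category, and (b) the equivalence of the three conditions governing higher regularity. My plan is to attack (a) by a local-to-global smoothing construction based on flow boxes, and to attack (b) by combining Schwartz's generalization of Denjoy's theorem (which blocks $C^{2}$ smoothing in the presence of exceptional minimal sets) with a direct $C^{\infty}$ construction when only trivial minimal sets are present.

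For part (a), the plan is as follows. Away from the rest points, the rectification theorem produces local flow boxes in which the flow is conjugate to a translation; in these coordinates the flow is already $C^{\infty}$, so the issue is only how to patch them. I would cover $M$ by finitely many sets of two types: small closed disks around each isolated fixed point and flow boxes around regular points. On a flow box, I can choose a continuous local cross-section and push forward a $C^{\infty}$ structure from the section times an interval. Around a fixed point, I would build a family of local cross-sections whose union forms an annular neighborhood; the continuous return structure across these sections, together with the fact that the flow is already smooth in the tangential direction of each box, lets me define a $C^{1}$ atlas in which the flow is $C^{1}$. The patching is carried out with a partition of unity taken only in the transverse direction to the orbits, so that it does not destroy the smooth parametrization along trajectories. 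Topological equivalence is preserved because the orbit structure is never altered, only the parametrization.

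For part (b), the implication (3)$\Rightarrow$(2) is trivial. The key nontrivial implication is (2)$\Rightarrow$(1): I would invoke the Denjoy--Schwartz theorem, which states that a $C^{2}$ flow (or $C^{2}$ vector field) on a compact surface has no exceptional (Cantor-like) minimal set. Hence every minimal set of a $C^{2}$ flow on $M$ is a fixed point, a periodic orbit, or all of $M$; and, by the classification of surfaces together with Poincar\'{e}--Bendixson-type arguments, the ``all of $M$'' case forces $M$ to be the torus with a minimal flow, which is again ``trivial'' in Gutierrez's sense. For (1)$\Rightarrow$(3), I would use the hypothesis that all minimal sets are trivial to obtain a Morse-type decomposition: fixed points and periodic orbits can be put in $C^{\infty}$ local normal form (Grobman--Hartman at hyperbolic fixed points, tubular neighborhood theorem around periodic orbits), and the complement is filled by wandering or recurrent orbits carried by flow boxes. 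Since there is no exceptional recurrence to obstruct smoothness along the transverse direction, the local $C^{\infty}$ models can be glued via smooth flow boxes into a global $C^{\infty}$ flow topologically equivalent to $\varphi$.

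The hard part, both technically and conceptually, is the jump from $C^{1}$ to $C^{2}$: it is genuinely obstructed by dynamics, not merely by the quality of the patching. Denjoy's classical counterexample on the torus shows that (1) really fails for some $C^{1}$ flows, and the only serious ingredient ensuring the equivalence is Schwartz's extension of Denjoy's theorem to arbitrary compact surfaces. On the constructive side, the subtlest point is controlling the transverse regularity of the flow across fixed points and the entrance/exit sets of isolating blocks, where the local flow-box picture breaks down; this is where most of the case analysis would concentrate, and where I would expect to lean most heavily on Gutierrez's original surface-specific techniques.
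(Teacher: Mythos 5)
This statement is not proved in the paper at all: it is quoted verbatim as a classical result of Guti\'errez, with the proof delegated entirely to the reference \cite{Gutierrez}. So there is no in-paper argument to compare yours against; what can be assessed is whether your sketch would actually deliver the theorem, and there it has genuine gaps. The one implication you get essentially right is $(2)\Rightarrow(1)$ via the Denjoy--Schwartz theorem (and $(3)\Rightarrow(2)$ is indeed trivial). The constructive directions are where the plan breaks down.

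First, for the $C^{1}$ part, your covering by ``small closed disks around each isolated fixed point'' presupposes that the fixed points are isolated. For a merely continuous flow on a surface the fixed point set can be an arbitrary closed subset (a Cantor set, a wild continuum, etc.), so there is no annular neighborhood of local cross-sections around ``the'' fixed point, and the flow-box atlas cannot be closed up in the way you describe; handling arbitrary singular sets is a substantial part of the actual construction. Second, and more seriously, in $(1)\Rightarrow(3)$ you assert that trivial minimal sets leave ``no exceptional recurrence to obstruct smoothness.'' That is false: on surfaces of genus $\geq 2$ a flow can have non-trivially recurrent orbits whose closures (quasi-minimal sets) are not minimal, while every minimal set it contains is a fixed point --- so hypothesis (1) holds and yet the flow exhibits exactly the kind of recurrence that threatens a Denjoy-type obstruction. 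Realizing these quasi-minimal sets by $C^{\infty}$ flows is the heart of Guti\'errez's proof and requires the Maier--Cherry structure theory of recurrence on surfaces together with delicate models built from interval-exchange-like return maps; it is not obtained by gluing flow boxes and local normal forms. Your appeal to Grobman--Hartman is also vacuous here, since a continuous flow has no linearization at a fixed point and its rest points need not be hyperbolic or even isolated. In short, the skeleton (Denjoy--Schwartz for the obstruction, local models plus patching for the construction) names the right landmarks, but the two places you defer to ``surface-specific techniques'' are precisely where the theorem lives, and the sketch does not supply them.
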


By a trivial minimal set we understand a fixed point, a closed trajectory or the whole manifold if $M$ is the $2$-dimensional torus and $\varphi$ is topologically equivalent to an irrational flow. We readily deduce from Guti\'{e}rrez' Theorem applied to the Alexandrov compactification of the plane that continuous flows $\varphi :\mathbb{R}^{2}\times \mathbb{R}\rightarrow \mathbb{R}^{2}$ are topologically equivalent to $C^{\infty }$ flows.

We use some basic results about planar vector fields through the paper. Two good {re\-fe\-ren\-ces} covering this material are the book of Hirsch, Smale and Devaney \cite{Hirsch} and the monograph of Palis and de Melo \cite{Palis}. 

Notice that to avoid trivial cases when we consider an (isolated) invariant set $K$ it will be implicit that $\emptyset\neq K\subsetneq M$ unless otherwise specified.

 The authors are grateful to J.M. Montesinos-Amilibia and J.J. S\'{a}nchez-Gabites for useful and inspiring conversations.

\section{Topological aspects of non-saddle sets}
In this section we will study some topological aspects of a flow $\varphi: M\times\mathbb{R}\to M$ defined on a locally compact metric space having an isolated non-saddle set. For instance, we will give a characterization of non-saddleness in terms of influence-like properties and we will also characterize the shape of those non-separating isolating non-saddle sets which are neither attractors nor repellers in the torus. 

We start by stating a well-known result about isolating blocks of isolated non-saddle sets whose proof we include here for the sake of completeness.

\begin{proposition}\label{neighborhood}
Every isolated non-saddle set admits a basis of isolating blocks of the form $N=N^+\cup N^-$.
\end{proposition}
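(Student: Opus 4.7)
My plan is to produce an isolating block $N$ of $K$ of the required form inside any given neighborhood $W$ of $K$, by starting from a generic isolating block $B\subset W$ of $K$ and using the non-saddle hypothesis to cut $B$ down to $N:=B^+\cup B^-$. Given $W$, first pick any isolating block $B\subset W$ of $K$, which exists because isolating blocks form a basis of neighborhoods of $K$. The crucial input is then the non-saddle property applied to the open neighborhood $\mathring B$ of $K$: there exists an open neighborhood $V$ of $K$ with $V\subset\mathring B$ such that every $x\in V$ satisfies $\gamma^+(x)\subset\mathring B$ or $\gamma^-(x)\subset\mathring B$. Equivalently, $V\subset B^+\cup B^-$, so the compact set $N:=B^+\cup B^-$ is contained in $W$ and contains the open neighborhood $V$ of $K$.

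Next I verify that $N$ has the desired structure. One checks directly that $N^+=B^+$ and $N^-=B^-$: if $x\in B^+$ then $\gamma^+(x)\subset B^+\subset N$, so $x\in N^+$; conversely, $x\in N^+\subset B$ implies $\gamma^+(x)\subset N\subset B$, so $x\in B^+$. The analogous chain of implications gives $N^-=B^-$, so that $N=B^+\cup B^-=N^+\cup N^-$. Moreover, since $K\subset N\subset B$ and $K$ is the maximal invariant subset of $B$, it is also the maximal invariant subset of $N$, so $N$ is an isolating neighborhood of $K$.

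The step I expect to be the main obstacle is the verification that $N$ is a genuine isolating block in the sense of conditions (1)--(3) of the definition, i.e., that $\partial N$ admits a decomposition $\partial N=N^i\cup N^o$ into entrance and exit sets with the prescribed immediate-exit and interior-entry behavior. The key observations, which I would exploit, are that $N^o\subset B^-$ and $N^i\subset B^+$ (because $B^+\subset N^+$ cannot exit forward, and dually for $B^-$), so the analysis of $\partial N$ splits into the portion inherited from $\partial B$---where the block structure of $B$ supplies the required behavior---and the interior interface between $B^+\cup B^-$ and the complementary ``saddle-like'' set $B\setminus(B^+\cup B^-)$, handled using the invariance of $B^\pm$ together with non-saddleness. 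If the constructed $N$ fails the strict boundary conditions (for instance because of points whose orbits lie wholly on $\partial B^+$ or $\partial B^-$), a small refinement along the interfaces, or passage to a sublevel set of a Lyapunov-type function adapted to the non-saddle structure, produces a genuine isolating block in which the identity $N=N^+\cup N^-$ is preserved by the same direct verification as above.
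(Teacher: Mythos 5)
Your construction is exactly the one the paper uses (take an arbitrary isolating block $B$, set $N:=B^+\cup B^-$, and use non-saddleness to see that $N$ contains a neighborhood $V$ of $K$, hence is an isolating neighborhood), and your verification that $N^{\pm}=B^{\pm}$ and that $N$ is an isolating neighborhood is correct. But the step you flag as ``the main obstacle'' is precisely the substance of the proof, and you do not actually carry it out. The key fact is that there is \emph{no} ``interior interface'' at all: one must show $\partial N\subset\partial B$, i.e.\ that the set $B-(B^+\cup B^-)$ cannot accumulate on $B^+\cup B^-$ inside $\mathring{B}$. Once this is established, $\partial N=\partial B\cap N$ decomposes as $(B^+\cap\partial B)\cup(B^-\cap\partial B)$, which inherit the entrance/exit behavior from the block structure of $B$, and you are done. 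Your sketch instead contemplates an interface that might violate the boundary conditions and proposes to repair it by ``a small refinement along the interfaces'' or ``a sublevel set of a Lyapunov-type function adapted to the non-saddle structure''; neither is an argument, and the second in particular presupposes structure (a Lyapunov function compatible with $N=N^+\cup N^-$) whose existence is essentially equivalent to what you are trying to prove. It is also not clear that any such modification would preserve the identity $N=N^+\cup N^-$, which is the whole point of the proposition.

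The missing argument goes as follows. Suppose $x\in\partial N\cap\mathring{B}$, say $x\in B^+$, and take $x_n\in B-(B^+\cup B^-)$ with $x_n\to x$. Choose $t_n\to+\infty$; then $xt_n\to K$ and, after passing to a subsequence, $x_nt_n\to K$. One then checks that $x_nt_n\notin B^-$ (since $x_n$ escapes $B$ in negative time and $t_n>0$) and that $x_nt_n\notin B^+$ (if $x_nt_n\in B^+$ there would be forward exit times $0<s_n<t_n$ with $x_ns_n\in B^o$; these $s_n$ must be bounded, for otherwise the forward orbits of the $x_n$ would come arbitrarily close to $K$ before leaving $B$, contradicting non-saddleness, and then $s_n\to s_0$ forces $xs_0\in B^o\cap B^+=\emptyset$). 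Hence both semi-orbits of $x_nt_n$ leave $B$ while $x_nt_n\to K$, contradicting non-saddleness. Without this contradiction argument (or an equivalent one), your proof is incomplete at its decisive step.
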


\begin{proof}
Since any isolated invariant compactum admits a neighborhood basis comprised of isolating blocks, it would be sufficient to prove that given an arbitrary isolating block $N$ of an isolated non-saddle set $K$, it contains an isolating block $N_0=N_0^+\cup N_0^-$.

 Let $N$ be an isolating block of the isolated non-saddle set $K$. Then, $K$ being non-saddle, there exists a neighborhood $V\subset N$ of $K$ such that for each $x\in V$, either $\gamma^+(x)\subset N$ or $\gamma^-(x)\subset N$. As a consequence, the compact subset $N_0=N^+\cup N^-$ is an isolating neighborhood of $K$ since $V\subset N_0$ and it is contained in the isolating block $N$. Moreover, $N_0$ is an isolating block. To see this we prove that $\partial N_0\subset\partial N$. Suppose that there exists $x\in\partial N_0-\partial N$. Then, $x\in\mathring{N}$ and there exists a sequence $x_n$ in $N-(N^+\cup N^-)$ such that $x_n\to x$. Suppose that $x\in N^+$, otherwise the argument is analogous. Let $t_n$ be a sequence of positive times , $t_n\to\infty$. Then, $xt_n\to K$ as $n\to\infty$ and, maybe after passing to a subsequence, $x_nt_n\to K$ as $n\to\infty$. Besides, the choice of $x_n$ ensures that neither $\gamma^+(x_nt_n)$ nor $\gamma^-(x_nt_n)$ are contained in $N$ leading to a contradiction with $K$ being non-saddle. Indeed, $x_nt_n$ cannot be in $N^-$ since $x_n$ leaves $N$ in negative time, being $x_n\in N-(N^+\cup N^-)$ and, since $t_n>0$ so does $x_nt_n$. We see that $x_nt_n$ cannot be in $N^+$. Suppose, to get a contradiction, that $x_nt_n\in N^+$ for almost all $n$. Then, there exists a sequence $0<s_n<t_n$ such that $x_ns_n\in N^o$ for all $n$. Moreover, the sequence $s_n$ must be bounded. If not, $x_ns_n$ would have a subsequence convergent to a point $z\in N^-$ and, as a consequence, $x_n$ would have a subsequence such that the positive semi-trajectory of each one of its elements gets arbitrarily close to $K$ before leaving $N$ in contradiction with its non-saddleness. Thus, we may assume that $s_n\to s_0\geq 0$ and, hence, $x_ns_n\to xs_0$. Since $x_ns_n\in N^o$ for each $n$, so does $xs_0$. However, $xs_0\in N^+$ which has empty intersection with $N^o$ and we get a contradiction. Therefore, $\partial N_0$ agrees with $\partial N\cap N_0$ and the compact subsets $N_0^i=N^+\cap\partial N$ and $N_0^o=N^-\cap\partial N$ are respectively an entrance and an exit set ensuring that $N_0$ is an isolating block. 
\end{proof}

\begin{remark}
It is not difficult to see that the isolating block $N_0$ defined in the proof of Proposition~\ref{neighborhood} agrees with the union of those components of $N$ which contain some component of $K$. It follows from this fact that every connected isolating block of a connected non-saddle set is of the form $N^+\cup N^-$.  
\end{remark}

A nice consequence of Proposition~\ref{neighborhood} is that for flows defined on locally compact ANRs, isolated non-saddle sets have the shape of finite polyhedra and hence, finitely generated \v Cech homology and cohomology. Moreover, it can be seen that if $N=N^+\cup N^-$, the inclusion $i:K\hookrightarrow N$ is a shape equivalence. These results were obtained in \cite{GMRSD}. 

We see that, in analogy with the basin of attraction of an attracting set (attractors and unstable attractors), the region of influence of an isolated non-saddle set is an open set. 

\begin{proposition}
If $K$ is an isolated non-saddle compactum then $\mathcal{I}(K)$ is an open neighborhood of $K$.
\end{proposition}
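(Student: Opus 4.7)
The plan is to build $\mathcal{I}(K)$ up from the structural data supplied by Proposition~\ref{neighborhood}, and then promote the conclusion to openness at an arbitrary point by a standard continuity–plus–invariance argument. First I would invoke Proposition~\ref{neighborhood} to fix an isolating block of the form $N=N^{+}\cup N^{-}$. The key observation, which I would establish at once, is the inclusion $N^{+}\subset\mathcal{A}(K)$: for $y\in N^{+}$ the positive semi-trajectory $\gamma^{+}(y)$ lies in the compact set $N$, so $\omega(y)$ is a non-empty, compact, invariant subset of the isolating neighborhood $N$ and must therefore be contained in the maximal invariant set $K$. The dual statement $N^{-}\subset\mathcal{R}(K)$ is immediate by reversing time, giving $N\subset\mathcal{I}(K)$; in particular $\mathcal{I}(K)$ is already a neighborhood of $K$.

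Next I would record the invariance of $\mathcal{I}(K)$. Because $\omega(ys)=\omega(y)$ and $\omega^{*}(ys)=\omega^{*}(y)$ for every $s\in\mathbb{R}$, both $\mathcal{A}(K)$ and $\mathcal{R}(K)$ are flow-invariant, and so is their union.

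To prove openness of $\mathcal{I}(K)$ at a general point $x\in\mathcal{I}(K)$, I would argue symmetrically and assume $x\in\mathcal{A}(K)$. The non-saddleness of $K$, applied to the neighborhood $\mathring{N}$, produces an open neighborhood $V$ of $K$ such that every $z\in V$ satisfies $\gamma^{+}(z)\subset N$ or $\gamma^{-}(z)\subset N$; by the first paragraph this forces $V\subset N^{+}\cup N^{-}\subset\mathcal{I}(K)$. Since $\omega(x)$ is non-empty and contained in $K\subset V$, there exists $T>0$ with $xT\in V$. Continuity of the time-$T$ homeomorphism $\varphi_{T}$ yields an open neighborhood $W$ of $x$ with $W\cdot T\subset V\subset\mathcal{I}(K)$, and flow-invariance of $\mathcal{I}(K)$ then gives $W\subset\mathcal{I}(K)$, as required.

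The only point requiring any care is the upgrade from "$\omega(x)$ is non-empty and in $K$'' to "some iterate $xT$ lies in the prescribed open set $V$''; but this is immediate from the very definition of $\omega$-limit since any $y\in\omega(x)\subset V$ is a limit point of $\{xt_{n}\}$ with $t_{n}\to\infty$, so $xt_{n}\in V$ for $n$ large. Everything else is bookkeeping with Proposition~\ref{neighborhood} and the continuity of the flow.
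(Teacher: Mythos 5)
Your proof is correct and follows essentially the same route as the paper's: both obtain an isolating block $N=N^{+}\cup N^{-}$ from Proposition~\ref{neighborhood}, note that $N^{+}\subset\mathcal{A}(K)$ and $N^{-}\subset\mathcal{R}(K)$ so that $\mathcal{I}(K)$ is a neighborhood of $K$, and then prove openness by flowing a point of $\mathcal{I}(K)$ forward (or backward) into this block and pulling back a small neighborhood by continuity and invariance. If anything, your version is slightly more careful than the paper's, since you only require $WT\subset N^{+}\cup N^{-}$ rather than $WT\subset N^{+}$, which handles all points of $\mathcal{I}(K)$ uniformly.
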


\begin{proof}
Given an isolating neighborhood of $K$ of the form $N=N^{+}\cup N^{-}$, it must be contained in $\mathcal{I}(K),$ hence $\mathcal{I}(K)$ is a neighborhood of $K$. On the other hand, if $x\in \mathcal{I}(K)$ and, say, $\emptyset\neq\omega(x)\subset K $, then there is a neighborhood $U$ of $x$ and a $t_{0}\geq 0$
such that $Ut_{0}$ is contained in $N^{+}$. Hence for every $y\in Ut_{0}$ we have that $\emptyset\neq\omega (y)\subset K$ and, as a consequence, the same thing
happens for every $z\in U.$ Therefore $\mathcal{I}(K)$ is open.
\end{proof}

However, in contrast with the case of attracting sets, the converse does not {ne\-ce\-ssa\-ri\-ly} hold.

\begin{remark}\label{saddle}
There are isolated saddle continua $K$ such that $\mathcal{I}(K)$ is an open neighborhood of $K$, hence this property does not characterize
non-saddleness. For instance, consider the Mendelson's flow on the plane \cite{Mendelson}, see Figure~\ref{fig:Mendel}.
\begin{figure}[h]
\center
\includegraphics[scale=0.3]{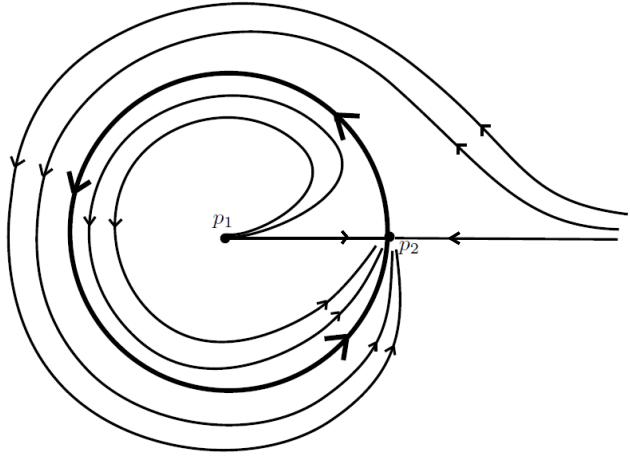}
\caption{Mendelson's attractor}
\label{fig:Mendel}
\end{figure}

Then $K=\{p_{2}\}$ is a saddle set (in fact, an unstable attractor) and its region of influence (region of attraction in this case)
is $\mathbb{R}^{2}-\{p_{1}\}.$
\end{remark}

The next result gives a sufficient condition for a non-saddle set to be either an attractor or a repeller.

\begin{proposition}\label{small}
Let $K$ be an isolated non-saddle continuum of a flow on a locally compact metric space $M$. Suppose that $K$ has arbitrarily small neighborhoods in 
$M$ which are not disconnected by $K$. Then $K$ is either an attractor or a repeller. In particular, if $M$ is an orientable $n$-dimensional manifold with $n>1$
and $K$ has trivial shape or, more generally, if $\check{H}^{n-1}(K)=\{0\}$, then $K$ is either an attractor or a repeller.
\end{proposition}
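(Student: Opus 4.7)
The plan is to argue contrapositively: assume $K$ is non-saddle but neither an attractor nor a repeller and deduce that every sufficiently small neighborhood of $K$ is disconnected by $K$, contradicting the hypothesis. By Proposition~\ref{neighborhood}, $K$ admits arbitrarily small isolating blocks of the form $N=N^+\cup N^-$ with $N^+\cap N^-=K$, where $N^+$ and $N^-$ are closed in $N$. First I would show that under the standing assumption both $N^+\setminus K$ and $N^-\setminus K$ are non-empty. Indeed, if $N^-=K$ then $N=N^+$, so $N$ is a compact positively invariant neighborhood of $K$ whose maximal invariant set is $K$, which forces $K$ to be an attractor; symmetrically $N^+=K$ would make $K$ a repeller.

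Next I would observe that $N^+\setminus K$ accumulates on $K$: for any $x\in N^+\setminus K$ the omega-limit $\omega(x)$ is a non-empty invariant subset of the compact set $N$, hence contained in $K$, so the tail $x[T,\infty)$ lies in $N^+\setminus K$ and approaches $K$. The analogous argument with $\omega^*$ shows $N^-\setminus K$ accumulates on $K$. Since $N^+$ and $N^-$ are closed in $N$, disjoint off $K$, and cover $N$, the subsets $N^+\setminus K$ and $N^-\setminus K$ are complementary clopen pieces of $N\setminus K$. Consequently, for any neighborhood $U\subset\mathring{N}$ of $K$, the accumulation property ensures that $U\cap(N^+\setminus K)$ and $U\cap(N^-\setminus K)$ are both non-empty, giving a clopen partition of $U\setminus K$. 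Letting $N$ range over arbitrarily small blocks, $U\setminus K$ is disconnected for arbitrarily small $U$, contradicting the hypothesis.

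For the \emph{in particular} statement, it remains to check that when $M$ is an orientable $n$-manifold with $n>1$ and $\check{H}^{n-1}(K)=0$, the set $K$ has arbitrarily small neighborhoods $U$ with $U\setminus K$ connected. I would take $U$ to be the interior of a compact orientable manifold-with-boundary neighborhood of $K$ (for instance a smooth isolating block inside a chart of $M$) and invoke Alexander–Poincaré–Lefschetz duality, which identifies $\tilde{H}_0(U\setminus K)$ with a group controlled by $\check{H}^{n-1}(K)$; the hypothesis then forces $U\setminus K$ to be connected. The trivial-shape case is subsumed, since trivial shape implies the vanishing of all reduced Čech cohomology groups of $K$, in particular $\check{H}^{n-1}(K)=0$.

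The first part of the argument is essentially formal once the decomposition $N=N^+\cup N^-$ has been put to use: the real conceptual content is the identification of \emph{not being an attractor} with \emph{$N^-\setminus K\neq\emptyset$} (and dually for repeller). The main technical step is the duality computation in the second part, where one must transport the classical Alexander duality statement from $\mathbb{R}^n$ to an arbitrary orientable manifold $M$; this requires choosing a nice manifold neighborhood of $K$ inside which $K$ behaves as a compact subset of Euclidean space for duality purposes.
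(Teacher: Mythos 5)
Your proof is correct and takes essentially the same route as the paper: the decomposition $N=N^{+}\cup N^{-}$ with $N^{+}\cap N^{-}=K$ yields the clopen partition of $N\setminus K$ into $N^{+}\setminus K$ and $N^{-}\setminus K$, and the \emph{in particular} clause is handled by the same Alexander duality plus long-exact-sequence computation (the paper applies duality to $K$ inside $M$ and then excises down to $\mathring{N}$, so no chart or Euclidean embedding of $K$ is needed, and your parenthetical about a block ``inside a chart'' should be dropped since $K$ need not lie in one). The only other difference is cosmetic: you argue contrapositively where the paper takes a neighborhood $W\subset N$ not disconnected by $K$ and concludes directly that one of $W\cap(N^{\pm}-K)$ is empty.
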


\begin{proof}
Consider a connected isolating block $N$ of $K$ of the form $N=N^{+}\cup N^{-}$, which is contained in $\mathcal{I}(K)$, and let $W$ be a neighborhood of $K$
such that $W\subset N$ and $W$ is not disconnected by $K$. Then either $W\cap (N^{+}-K)$ or $W\cap (N^{-}-K)$ is empty. In the first case $K$ is a
repeller and in the second case an attractor. If $M$ is an orientable $n$-manifold then, by Alexander duality, $H_{1}(M,M-K)\cong \check{H}^{n-1}(K)$ and, by
excision, $H_{1}(\mathring{N},\mathring{N}-K)\cong H_{1}(M,M-K)$. Thus, if $K$ has trivial shape or more generally $\check{H}^{n-1}(K)=\{0\}$ from the terminal part
of the exact homology sequence of the pair $(\mathring{N},\mathring{N}-K)$ 
\[
\ldots\rightarrow H_{1}(\mathring{N},\mathring{N}-K)=\{0\}\rightarrow \widetilde{H}_{0}(\mathring{N}-K)\rightarrow 
\widetilde{H}_{0}(\mathring{N})=\{0\} 
\]
we get that $\widetilde{H}_{0}(\mathring{N}-K)=\{0\}$. As a consequence $\mathring{N}$ is not disconnected by $K$. Since $N$ can be taken arbitrarily small, this is a particular case of the situation considered before.
\end{proof}

Throughout this paper we will often use the torus as a relevant phase space for flows which illustrate the main notions we introduce. The previous
proposition, together with some classical results in Algebraic Topology, can be used to describe the topological structure of an important class of
non-saddle sets in the torus.

\begin{theorem}
Suppose that $K$ is an isolated non-saddle continuum of a flow on the torus, $T$, such that $K$ does not separate $T$ and $K$ is neither an attractor nor
a repeller. Then $K$ has the shape of a circle. Moreover, if $K$ does not contain fixed points it is either a limit cycle or homeomorphic to a closed annulus bounded by two limit cycles.
\end{theorem}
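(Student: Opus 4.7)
The strategy is to identify the isolating block $N=N^+\cup N^-$ supplied by Proposition~\ref{neighborhood} as a topological annulus; since, as noted in the paragraph following that proposition, $K\hookrightarrow N$ is a shape equivalence, this will immediately give $K$ the shape of a circle. By Gutiérrez's theorem we may assume the flow is $C^\infty$, so $N$ can be taken to be a compact connected orientable $2$-submanifold of $T$ with $\partial N=N^i\sqcup N^o$ a disjoint union of $p$ entrance and $q$ exit circles. Because $K$ is neither an attractor nor a repeller, $p,q\geq 1$ and $b:=p+q\geq 2$.

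Write $g$ for the genus of $N$, and let $N^c:=T-\mathring{N}$ have $c\geq 1$ components of total genus $G$. From $\chi(T)=0=\chi(N)+\chi(N^c)$ one obtains
\[
g+G+b \;=\; 1+c.
\]
I now feed in the non-separation hypothesis. The flow-induced homeomorphisms $N^+-K\cong N^i\times[0,\infty)$ and $N^--K\cong N^o\times(-\infty,0]$ show that $\mathring{N}-K$ has exactly $p+q$ connected components. Consider the graph $\Gamma$ whose $p+q+c$ vertices are these inner components together with the components of $N^c$, and whose $b=p+q$ edges are obtained by pairing each boundary circle with the inner component it borders on and the outer component containing it. A direct check (tracing paths across the circles of $\partial N$) identifies the components of $T-K$ with the components of $\Gamma$, so $T-K$ connected forces $V-E\leq 1$, i.e.\ $c\leq 1$, hence $c=1$. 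Plugging back gives $g+G+b=2$, which with $b\geq 2$ and $g,G\geq 0$ forces $g=G=0$, $b=2$, $p=q=1$. Thus $N$ is an annulus and $K$ has the shape of $S^1$.

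For the \emph{moreover} part, assume $K$ contains no fixed points. The Poincaré--Bendixson theorem, applied in the (planar) annulus $N$, shows that $\omega(x)$ and $\omega^*(x)$ are periodic orbits contained in $K$ for every $x\in K$. Any periodic orbit $\gamma\subset K$ is essential in $N$: otherwise $\gamma$ would bound a disk $D\subset N$ which would be flow-invariant, Brouwer's theorem would place a fixed point inside $D$, and by maximality of $K$ in $N$ this point would lie in $K$, contradicting the hypothesis. Essential simple closed curves in the annulus $N$ are totally ordered by cobounded sub-annuli, and a compactness argument together with Poincaré--Bendixson (any limit of periodic orbits in $K$ is again a periodic orbit of $K$, since $K$ has no fixed points) produces bottommost and topmost periodic orbits $\gamma_-,\gamma_+\subset K$. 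Let $A_-$ (resp.\ $A_+$) be the sub-annulus of $N$ cobounded by $N^i$ and $\gamma_-$ (resp.\ by $\gamma_+$ and $N^o$). Every orbit through $\mathring{A}_-$ exits $N$ in finite backward time through $N^i$, so $K\cap A_-=\gamma_-$, and symmetrically $K\cap A_+=\gamma_+$. If $\gamma_-=\gamma_+$ then $K=\gamma_-$ is a limit cycle. Otherwise the intermediate sub-annulus $A_0$ cobounded by $\gamma_-$ and $\gamma_+$ is flow-invariant (orbits cannot cross the two invariant boundary circles), so maximality of $K$ gives $\mathring{A}_0\subseteq K$; combining with the previous equalities yields $K=\overline{A_0}$, a closed annulus bounded by the two limit cycles $\gamma_\pm$. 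I expect the main technical hurdle to be the graph-theoretic step that extracts $c=1$ from the non-separation hypothesis, and a secondary one to be justifying the existence of the extreme periodic orbits $\gamma_\pm$ inside $K$.
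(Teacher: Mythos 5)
Your argument is correct, but it takes a genuinely different route from the paper's on both halves of the statement. For the shape claim the paper works with $\mathbb{Z}_2$ coefficients, uses Alexander duality and the exact sequence of $(T,T-K)$ to get $\rank\check{H}^1(K)\leq 2$, excludes rank $2$ by a homological argument that feeds back into Proposition~\ref{small}, and then invokes the Borsuk--McMillan--Dydak--Segal classification of continua with $\rank\check{H}^1\leq 1$ to conclude that $K$ has trivial shape (excluded) or the shape of a circle. You instead identify the isolating block $N=N^+\cup N^-$ itself as an annulus, via the Euler-characteristic count plus the $\pi_0$-graph forced by non-separation, and then quote the shape equivalence $K\hookrightarrow N$; the computation is sound (the product structure of $N^+-K$ and $N^--K$ over $N^i$ and $N^o$ holds precisely because $N=N^+\cup N^-$, and $c\geq 1$ because $K\neq T$), it is more self-contained in that it avoids the shape-classification theorems, and it directly produces the annular block that the paper must re-derive from the shape statement before starting the second half. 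For the ``moreover'' part the paper transplants the flow on $N$ into $\mathbb{R}^2$ and cites \cite{Barge-Sanjurjo}, whereas your direct Poincar\'e--Bendixson analysis in the annulus essentially reproves the relevant case of that reference. Two spots of your sketch need firming up (you flag both yourself): (i) the existence and extremality of $\gamma_\pm$ is obtained more cleanly not by a Hausdorff-limit argument (limits of periodic orbits need not be single periodic orbits a priori) but by setting $\gamma_-=\omega(x)$ for $x\in N^i$ -- no essential periodic orbit of $K$ can lie strictly between $N^i$ and $\gamma_-$, since the forward orbit of $x$ would have to cross it; and (ii) the claim $K\cap A_-=\gamma_-$ needs, besides minimality of $\gamma_-$, the standard fact that no orbit in a planar domain can be doubly asymptotic to the same periodic orbit from the same side (monotonicity of successive crossings of a transversal). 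Both are routine Poincar\'e--Bendixson facts, so the proof goes through.
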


\begin{proof}
We take coefficients in $\mathbb{Z}_{2}.$ Since $K$ does not separate $T$ we have that $\widetilde{H}_{0}(T-K)=\{0\}$. By using the exact homology sequence
of the pair $(T,T-K)$ 
\[
\ldots\rightarrow H_{1}(T)\rightarrow H_{1}(T,T-K)\rightarrow \widetilde{H}
_{0}(T-K)=\{0\}\rightarrow\ldots
\]%
and Alexander duality we get that $\rank\check{H}^{1}(K)=\rank H_{1}(T,T-K)\leq 2$. We will prove, arguing by contradiction, that, in
fact, $\rank\check{H}^{1}(K)\neq 2$. Suppose $\rank\check{H}^1(K)= 2$ and let $N=N^+\cup N^-$ be an isolating block of $K$.  Consider the terminal part of the reduced homology long exact sequence of the pair $(T,N)$ 
\begin{align*}
 \{0\}=H_{2}(N)\rightarrow H_{2}(T)=\mathbb{Z}_{2}\rightarrow
H_{2}(T,N)\rightarrow H_{1}(N)\rightarrow H_{1}(T)\to \\
H_1(T,N)\to \widetilde{H}_0(N)=\{0\}
\end{align*}
The homomorphism $H_{2}(T)\rightarrow H_{2}(T,N)$ is an isomorphism since $ H_{2}(T,N)=H^{2}(T,N)$ by the Universal Coefficients Theorem and by
Alexander duality $H^{2}(T,N)=\check{H}^{2}(T,K)\cong H_{0}(T-K)$, which is $\mathbb{Z}_{2}$ since $K$ does not separate $T$. As a consequence, the homomorphism $H_1(N)\to H_1(T)$ is injective and hence an isomorphism. Therefore, $H_1(T,N)=\{0\}$ and by excising $K$, $H_1(T-K,N-K)=\{0\}$. Then, from the homology long exact sequence of the pair $(T-K,N-K)$ we get $H_0(N-K)\cong H_0(T-K)=\mathbb{Z}_2$, i.e. $K$ does not separate $N$. Besides, $N$ can be chosen arbitrarily small, $N$ being an {iso\-la\-ting} block, and hence Proposition~\ref{small} ensures that it has to be an attractor or a repeller in contradiction with the assumption. As a consequence, $\rank \check{H}^1(K)$ is either $0$ or $1$ and by \cite{Borsuk Proc, Dydak-Segal, Mcmillan1, Mcmillan2} $K$ has either trivial shape or the shape of a circle respectively. The first case is excluded by Proposition~\ref{small} and hence $K$ has the shape of a circle. 

For the last part of the statement observe that $K$, having the shape of a circle, admits an isolating block $N=N^+\cup N^-$ which is an annulus by \cite{Gutierrez} and \cite{Conley-Easton}. The annulus $N$ can be embedded into $\mathbb{R}^2$ endowed with a smooth flow which preserves the dynamics in a small neighborhood of $K$ contained in $\mathring{N}$. This can be achieved by a slight modification of \cite[Example~21]{Sanchez-Gabites Transactions} combined with Guti\'errez Theorem.  Then, if $K$ does not have fixed points, by \cite{Barge-Sanjurjo} the result follows. 
\end{proof}

Remark~\ref{saddle} shows that unstable attractors are not necessarily non-saddle. As a matter of fact we have the following characterization, whose proof is given
in \cite{Sanjurjo LN}.

\begin{proposition}\label{unstable-nonsaddle}
Let $K$ be an unstable attractor of a flow. Then $K$ has no external explosions if and only if it is non-saddle.
\end{proposition}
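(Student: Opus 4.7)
The strategy is to exploit the structure of isolating blocks of the form $N = N^{+} \cup N^{-}$ provided by Proposition~\ref{neighborhood}, together with the fact that, for an unstable attractor, the basin $\mathcal{A}(K)$ is an open neighborhood of $K$ in which every point has its $\omega$-limit contained in $K$. The condition of having no external explosions unpacks into the statement that $J^{+}(x) \subset \mathcal{A}(K)$ for every $x \in K$: perturbed trajectories starting near $K$ may leave any prescribed small neighborhood, since $K$ is not Lyapunov stable, but they do not leave $\mathcal{A}(K)$ and are therefore recaptured by $K$.

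For the implication non-saddle $\Rightarrow$ no external explosions, I would take $x \in K$ and $y \in J^{+}(x)$ witnessed by sequences $x_n \to x$, $t_n \to +\infty$, $x_n t_n \to y$. Fixing an arbitrarily small isolating block $N = N^{+} \cup N^{-}$, for large $n$ we have $x_n \in N$, and non-saddleness forces $x_n \in N^{+}$ or $x_n \in N^{-}$. In the first case $x_n t_n \in N$ for every $t_n > 0$, so $y \in N$; by the freedom in choosing $N$, the point $y$ sits in every neighborhood of $K$ and hence in $K \subset \mathcal{A}(K)$. In the second case, $x_n \in \mathcal{R}(K)$, and since $\mathcal{A}(K)$ is an open neighborhood of $K$ we also have $x_n \in \mathcal{A}(K)$, so the whole trajectory through $x_n$ lies in $\mathcal{A}(K)$ and is attracted to $K$; tracking $x_n t_n$ along this basin trajectory yields $y \in \mathcal{A}(K) \cup K$. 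For the converse, given $U \supset K$, I would use upper semicontinuity of $J^{+}$ and the hypothesis $J^{+}(K) \subset \mathcal{A}(K)$ to produce a neighborhood $V$ of $K$ with $J^{+}(\overline{V}) \subset U$. Every $x \in V$ then either satisfies $\gamma^{+}(x) \subset U$, or, failing that, the excursion returns to $K$ within $U$ in a way that forces $\omega^{*}(x) \subset K$ and $\gamma^{-}(x) \subset U$, which is precisely the non-saddle dichotomy.

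The main obstacle is the case $x_n \in N^{-}$ in the forward implication (and its counterpart in the converse): the points $x_n$ exit $N$ in forward time by the very definition of $N^{-}$, yet we need to certify that the late-time limit $y$ stays inside $\mathcal{A}(K) \cup K$ rather than drifting to an external asymptote. This requires combining the attractor property (openness and absorption of $\mathcal{A}(K)$) with the invariance of the basin and a careful accounting of the arcs $x_n[0, t_n]$, which may leave and re-enter small neighborhoods of $K$; since $\mathcal{A}(K)$ is open but in general not closed, one must avoid accumulation of $x_n t_n$ on its boundary, and the no-explosions hypothesis is precisely what guarantees this in the equivalent reformulation.
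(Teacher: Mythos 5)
The paper itself gives no proof of this proposition (it is quoted from \cite{Sanjurjo LN}), so I can only judge your plan on its own merits, and it has genuine gaps in both directions. In the implication non-saddle $\Rightarrow$ no external explosions, the problematic case is exactly the one you flag ($x_n\in N^-$), but your resolution of it is circular: you say that ``the no-explosions hypothesis is precisely what guarantees'' that $y=\lim x_nt_n$ does not drift out of $\mathcal{A}(K)$, yet in this direction that hypothesis is the conclusion, not an assumption. The inference ``each $x_nt_n$ lies in $\mathcal{A}(K)$, hence so does $y$'' is a non sequitur, since $\mathcal{A}(K)$ is open and in general not closed. What actually closes the case is the block structure $N=N^+\cup N^-$ itself: assuming $y\notin\mathcal{A}(K)$, one shows first that $x\in K$ and passes to the first exit points $p_n=x_n\sigma_n\in N^o$, which (after a subsequence) converge to some $p\in N^o\subset\mathcal{A}(K)$; choosing $S>0$ with $pS\in N^+\cap\mathring{N}$ one gets $p_nS\in N$ for large $n$, while $p_nr_n=x_nt_n\notin N$ with $r_n>S$ forces $p_nS\notin N^+$, hence $p_nS\in N^-$ and $\gamma^-(p_nS)\subset N$ --- contradicting that $p_n$ lies in the exit set, so that $p_n(0,\delta]\subset M-N$. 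Some argument of this kind is what your sketch is missing.

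The converse direction is in worse shape. The intermediate claim that for every neighborhood $U$ of $K$ there is a neighborhood $V$ with $J^+(\overline{V})\subset U$ would give, letting $U$ shrink, $J^+(x)\subset K$ for all $x\in K$, which is the classical Ura--Bhatia characterization of stability; it is therefore false for \emph{every} unstable attractor, with or without external explosions. (If instead you intend $U$ to be a single large neighborhood containing $J^+(K)$, the resulting dichotomy for that one $U$ does not establish non-saddleness, which quantifies over all neighborhoods.) Moreover, the closing sentence --- that a point of $V$ whose forward orbit escapes $U$ must have $\omega^*(x)\subset K$ and $\gamma^-(x)\subset U$ --- simply asserts the non-saddle dichotomy; no mechanism is given for why the absence of external explosions forces the backward semi-orbits of escaping points to converge to $K$ inside $U$. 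This direction genuinely requires the structural consequences of the no-external-explosions hypothesis (uniform control of the escape-and-return arcs, ultimately the parallelizability of $\mathcal{A}(K)-K$ obtained in \cite{Athanassopoulos} and \cite{Moron}), none of which appears in the plan. A minor further point: you quantify ``no external explosions'' over $x\in K$ only, whereas the definition in the literature quantifies over $x\in\mathcal{A}(K)$; the reduction of the latter to the former is true but itself needs the block argument sketched above.
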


In order to characterize non-saddle sets by influence-like properties we introduce the following notion first.

\begin{definition}\label{sinfluence}
A point $p$ is \emph{strongly influenced} by a compact invariant set $K$ if it has a neighborhood $U_{p}$ with the following property: for every neighborhood $V$ of $K$ there is a $T\geq 0$ such that for every $x\in U_{p}$ we have $x[T,\infty )\subset V$ or $x(-\infty ,-T]\subset V.$ We also say that the
neighborhood $U_{p}$ is strongly influenced by $K$. There are, with obvious changes, similar definitions for the notions of a point and a
neighborhood \emph{strongly attracted} or \emph{strongly repelled} by $K$.
\end{definition}

If $p$ is strongly influenced by $K$ then $p\in \mathcal{I}(K)$. Moreover, it is clear that if $p$ is strongly influenced by $K$ then all points in $%
\gamma (p)$ are strongly influenced by $K$.

Definition~\ref{sinfluence} provides all we need to characterize non-saddleness.

\begin{proposition}
The following are equivalent for an isolated invariant compactum $K$:
\begin{enumerate}
\item[i)] $K$ is non-saddle;
\item[ii)] All points of $K$ are strongly influenced by $K$;
\item[iii)] $K$ has a neighborhood $U$ all whose points are strongly influenced by $
K$.
\end{enumerate}

Moreover, if $K$ is non-saddle then $\mathcal{I}(K)$ agrees with the set of all points strongly influenced by $K$, all points in $\mathcal{A}(K)-K$
are strongly attracted by $K$ and all points in $\mathcal{R}(K)-K$ are strongly repelled by $K$.
\end{proposition}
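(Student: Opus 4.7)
The plan is to prove the circle of implications $(i)\Rightarrow(iii)\Rightarrow(ii)\Rightarrow(i)$ and then derive the ``moreover'' clause from the ingredients built along the way. The key technical input for both $(i)\Rightarrow(iii)$ and the ``moreover'' part is a uniform convergence lemma on the asymptotic pieces of an isolating block $N=N^{+}\cup N^{-}$ of $K$ (whose existence is guaranteed by Proposition~\ref{neighborhood}): for every neighborhood $V$ of $K$ there exists $T\ge 0$ such that $N^{+}[T,\infty)\subset V$ and $N^{-}(-\infty,-T]\subset V$. I would prove this by contradiction: if sequences $x_n\in N^{+}$ and $t_n\to\infty$ satisfied $x_nt_n\notin V$, the compactness of $N^{+}$ would produce $z=\lim x_{n_k}t_{n_k}\in N^{+}\setminus V$; but for each $s\in\mathbb{R}$ and large $k$ one has $t_{n_k}+s>0$, whence $zs=\lim x_{n_k}(t_{n_k}+s)\in N^{+}$, so $\gamma(z)\subset N$ and therefore $z$ lies in the maximal invariant set of $N$, i.e.\ $z\in K\subset V$, a contradiction. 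Once this lemma is in hand, $(i)\Rightarrow(iii)$ is immediate by taking $U=\mathring{N}$ and $U_p=N$ for every $p\in U$: each $x\in N$ either lies in $N^{+}$, giving $x[T,\infty)\subset V$, or in $N^{-}$, giving $x(-\infty,-T]\subset V$.

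The implication $(iii)\Rightarrow(ii)$ is trivial since $K\subset U$. For $(ii)\Rightarrow(i)$, I would fix an arbitrary neighborhood $U$ of $K$ and, for each $p\in K$, apply the strong-influence hypothesis with $V=U$ to obtain a neighborhood $U_p$ of $p$ and a time $T_p\ge 0$ such that every $x\in U_p$ satisfies $x[T_p,\infty)\subset U$ or $x(-\infty,-T_p]\subset U$. The main obstacle is the gap between ``eventual containment in $U$'' and ``full semi-trajectory in $U$'', which is what non-saddleness actually demands. To close this gap, I would use continuity of the flow together with the fact that $p[-T_p,T_p]\subset K\subset U$ is compact: there exists a neighborhood $W_p\subset U_p$ of $p$ with $x[-T_p,T_p]\subset U$ for every $x\in W_p$. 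Combining with the strong-influence dichotomy yields $\gamma^{+}(x)\subset U$ or $\gamma^{-}(x)\subset U$ for every $x\in W_p$. Compactness of $K$ gives a finite subcover $W_{p_1},\ldots,W_{p_n}$, and any neighborhood $V$ of $K$ with $V\subset\bigcup_i W_{p_i}$ witnesses non-saddleness of $K$ with respect to $U$.

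For the moreover clause, assume $K$ is non-saddle and let $x\in\mathcal{A}(K)\setminus K$. Since $\emptyset\neq\omega(x)\subset K$, there is $t_0\ge 0$ with $xt_0\in\mathring{N^{+}}$, and by continuity a neighborhood $U_x$ of $x$ with $U_xt_0\subset N^{+}$. The uniform lemma applied to $N^{+}$ then gives, for every neighborhood $V$ of $K$, a time $T_V\ge 0$ such that $y[t_0+T_V,\infty)\subset V$ for all $y\in U_x$, so $x$ is strongly attracted (and in particular strongly influenced) by $K$; the case $x\in\mathcal{R}(K)\setminus K$ is symmetric. Since points of $K$ are strongly influenced by $(i)\Rightarrow(ii)$ and, as already remarked after Definition~\ref{sinfluence}, every strongly influenced point belongs to $\mathcal{I}(K)$, this identifies $\mathcal{I}(K)$ with the set of all strongly influenced points and completes the proof.
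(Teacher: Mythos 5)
Your proof is correct, and it shares the paper's overall skeleton (a cycle of implications built around an isolating block $N=N^{+}\cup N^{-}$), but it diverges from the paper's argument in two substantive ways. First, you isolate and actually prove the uniform absorption lemma for $N^{\pm}$ (for every neighborhood $V$ of $K$ there is $T\geq 0$ with $N^{+}[T,\infty)\subset V$ and $N^{-}(-\infty,-T]\subset V$); the paper uses this fact only implicitly, both when asserting that every point of $\mathring{N}$ is strongly influenced and in the ``moreover'' clause. Second, and more significantly, your treatment of $(ii)\Rightarrow(i)$ is genuinely different: the paper argues by contradiction, showing that for any isolating neighborhood $N$ there is $\epsilon>0$ such that every point of $N$ escaping $N$ in both time directions lies at distance greater than $\epsilon$ from $K$ (otherwise the escape times of a sequence $x_{n}\to x\in K$ would tend to $\pm\infty$ by invariance of $K$, contradicting strong influence at $x$), and then concludes that $N^{+}\cup N^{-}$ is an isolating neighborhood of the required form. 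You instead verify the definition of non-saddleness directly, patching the tail containment $x[T_{p},\infty)\subset U$ or $x(-\infty,-T_{p}]\subset U$ with the finite-time containment $x[-T_{p},T_{p}]\subset U$ obtained from the tube lemma, and extracting a finite subcover of $K$; this is more elementary and does not invoke isolating neighborhoods at all for that implication, whereas the paper's route re-derives the $N^{+}\cup N^{-}$ structure as a byproduct. Both arguments are sound; the only step you might spell out further is why $xt_{0}$ lies in the interior of $N^{+}$ in the ``moreover'' part---it does, because $xt_{0}\in\mathring{N}$, $xt_{0}\notin K=N^{+}\cap N^{-}$, and $N^{-}$ is closed, so $\mathring{N}-N^{-}$ is an open neighborhood of $xt_{0}$ contained in $N^{+}$.
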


\begin{proof}
Conditions ii) and iii) are clearly equivalent since, by definition, strong influence on a point $p$ requires strong influence on all points of a
neighborhood $U_{p}$. Moreover, if $K$ is non-saddle then every point in the interior of an isolating neighborhood of $K$ of the form $N=N^{+}\cup N^{-}$
is strongly influenced by $K$ and, as a consequence, we have that i) implies iii). On the other hand suppose that all points of $K$ are strongly
influenced by $K.$ We claim that for every isolating neighborhood $N$ of $K$ there is an $\epsilon >0$ such that if a point $x\in N$ abandons $N$ in the
past and in the future then $d(x,K)>\epsilon $. If not, there is and isolating neighborhood $N$ and a sequence of points $x_{n}$ contained in $N$
\ with $x_{n}\rightarrow x\in K$ such that every $x_{n}$ abandons $N$ in the past and in the future in times, say $T_{n}<0$ and $T_{n}^{\prime }>0.$
Since $x_{n}\rightarrow x\in K$ and $K$ is invariant we must have that $T_{n}\rightarrow -\infty $ and $T_{n}^{\prime }\rightarrow \infty .$
However, this is in contradiction with the fact that $x$ is strongly influenced by $K$. Hence such an $\epsilon >0$ exists. As a consequence, if
we define $N_{0}=N^{+}\cup N^{-}$ we obtain another isolating neighborhood $N_{0}\subset N$ with $N_{0}^{+}=N^{+}$ and $N_{0}^{-}=N^{-}$ and $K$ is
non-saddle. Thus ii) implies i). Concerning the last assertion in the statement of the proposition, if $K$ is non-saddle and $x\in $\ $\mathcal{I}%
(K)$ then $\gamma (x)$ enters every isolating neighborhood of the form $N=N^{+}\cup N^{-}$, all whose points are strongly influenced by $K.$ Hence $%
x $ is strongly influenced by $K$. If $x\in \mathcal{A}(K)-K$ then $\gamma^{+}(x)$ enters $N^{+}$ and, hence $x$ is strongly attracted and similarly,
if $x\in \mathcal{R}(K)-K$ then $x$ is strongly repelled.
\end{proof}

\section{On the structure of a flow having a non-saddle set}

This section is devoted to the study of the structure of a flow on a locally compact metric space having a non-saddle set. For this purpose we will use the following notation:

\begin{enumerate}
\item[-] $\mathcal{H}(K)=\mathcal{A}(K)\cap \mathcal{R(}K)$, the set of all points $x$ such that $\omega (x)\neq \emptyset ,$ $\omega^* (x)\neq \emptyset $ and $\omega(x)\cup \omega^* (x)\subset K$. If $x\in \mathcal{H}(K)-K$ we say that the point $x$ and the trajectory $\gamma (x)$ are \emph{homoclinic}.

\item[-] $\mathcal{A}^*(K)=\mathcal{A}(K)-\mathcal{R(}K)$, the set of all points $x$ such that $\emptyset \neq \omega (x)\subset K$ but $\omega^*
(x)\nsubseteq K$ or $\omega^*(x)=\emptyset$.

\item[-] $\mathcal{R}^*(K)=\mathcal{R}(K)-\mathcal{A(}K)$, the set of all points $x$ such that $\emptyset \neq \omega^* (x)\subset K$ but $\omega
(x)\nsubseteq K$ or $\omega (x)=\emptyset $.
\end{enumerate}

\begin{proposition}\label{topol}
Let $K$ be an isolated non-saddle set. Then:

i) $\mathcal{H}(K)-K$ is an open set in $M$.

ii) $\mathcal{A}^{\ast }(K)\cup K$ and $\mathcal{R}^{\ast }(K)\cup K$ are
closed in $\mathcal{I}(K)$.
\end{proposition}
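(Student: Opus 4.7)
The plan is to leverage the last assertion of the preceding proposition: since $K$ is non-saddle, every point of $\mathcal{A}(K)\setminus K$ is strongly attracted by $K$ and every point of $\mathcal{R}(K)\setminus K$ is strongly repelled by $K$. I would first upgrade these statements to the topological claim that both $\mathcal{A}(K)\setminus K$ and $\mathcal{R}(K)\setminus K$ are open in $M$, after which (i) and (ii) become formal consequences.

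For the openness of $\mathcal{A}(K)\setminus K$, take $x\in\mathcal{A}(K)\setminus K$ and let $U_x$ be the neighborhood furnished by strong attraction, shrunk to avoid $K$ (which is possible since $K$ is closed and $x\notin K$). Fix an isolating block $N=N^+\cup N^-$ of $K$ supplied by Proposition~\ref{neighborhood} and apply the strong attraction property with $V=\mathring{N}$: there is a $T\ge 0$ such that $U_x[T,\infty)\subset N$. For each $y\in U_x$, the set $\omega(y)$ is then a nonempty compact invariant subset of $N$, hence $\omega(y)\subset K$ by maximality of $K$ as the invariant set in $N$. Therefore $y\in\mathcal{A}(K)\setminus K$, so $U_x\subset\mathcal{A}(K)\setminus K$. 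Running the same argument for the reverse flow establishes that $\mathcal{R}(K)\setminus K$ is also open in $M$.

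Assertion (i) is then immediate, since $\mathcal{H}(K)\setminus K=(\mathcal{A}(K)\setminus K)\cap(\mathcal{R}(K)\setminus K)$ is the intersection of two open sets. For (ii) I would combine the disjoint decomposition $\mathcal{I}(K)=\mathcal{A}^*(K)\sqcup\mathcal{H}(K)\sqcup\mathcal{R}^*(K)$ with the inclusion $K\subset\mathcal{H}(K)$ to obtain the set-theoretic identity
\[
\mathcal{I}(K)\setminus\bigl(\mathcal{A}^*(K)\cup K\bigr)=\mathcal{R}(K)\setminus K,
\]
whose right-hand side was just shown to be open in $M$, hence relatively open in $\mathcal{I}(K)$. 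This gives closedness of $\mathcal{A}^*(K)\cup K$ in $\mathcal{I}(K)$, and the symmetric identity disposes of $\mathcal{R}^*(K)\cup K$.

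I do not anticipate a substantive obstacle: the real content lies in the step from \emph{strong attraction} to \emph{an open neighborhood contained in} $\mathcal{A}(K)$, which is merely a compactness-plus-invariance argument. The only point demanding some care is ensuring that $\omega(y)$ (and dually $\omega^*(y)$) is nonempty and contained in $K$; this forces the auxiliary neighborhood $V$ to be chosen with compact closure sitting inside an isolating block, so that maximality of $K$ as the invariant set in $N$ can legitimately be invoked.
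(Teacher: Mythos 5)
Your proof is correct and rests on the same key fact as the paper's, namely that strong attraction (resp.\ repulsion) of a point of $\mathcal{A}(K)-K$ (resp.\ $\mathcal{R}(K)-K$), supplied by the preceding proposition, propagates to a whole neighborhood, making $\mathcal{A}(K)-K$ and $\mathcal{R}(K)-K$ open. The paper phrases (ii) as a sequential contradiction argument rather than your cleaner set-theoretic complementation, but the underlying mechanism is identical.
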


\begin{proof}
If $x\in $\ $\mathcal{H}(K)-K$ then $x$ is both strongly attracted and strongly repelled by $K,$ which means that it has a neighborhood $U_{x}$
contained in $\mathcal{A}(K)\cap \mathcal{R(}K)$ and not meeting $K.$ Hence $\mathcal{H}(K)-K$ is open.

To prove ii) let us argue by contradiction. If $\mathcal{A}^*(K)\cup K$ is not closed in $\mathcal{I}(K)$ then there exists a sequence $%
x_{n}\rightarrow x\in \mathcal{I}(K)$ with $x_{n}\in \mathcal{A}^*(K)\cup K$ and $x\notin \mathcal{A}^{\ast }(K)\cup K.$ Since $x\notin 
\mathcal{A}^{\ast }(K)\cup K$ we must have that $x\in \mathcal{R(}K)-K$ and, as a consequence, $x$ is strongly repelled by $K$. But this implies that $%
x_{n}$ is repelled by $K$ for almost all $n$, which is in contradiction with the choice of the sequence $x_{n}$.
\end{proof}

In the sequel we will be concerned with the study of the region of influence $\mathcal{I}(K)$ and, in particular, with the structure of $%
\mathcal{I}(K)-K$. By the previous results, $\mathcal{I}(K)-K$ is the disjoint union of the sets $\mathcal{H}(K)-K,$ $\mathcal{A}^{\ast }(K)$ and $%
\mathcal{R}^{\ast }(K)$ where $\mathcal{H}(K)-K$ is open and $\mathcal{A}^*(K)$ and $\mathcal{R}^{\ast }(K)$ are closed in $\mathcal{I}(K)-K$.
However, $\mathcal{H}(K)-K$ is not necessarily closed in $\mathcal{I}(K)-K$ as the following example shows.

\begin{example}\label{example1}
This example shows an isolated non-saddle set $K$ of a flow on the torus $T$ (which is represented in Figure~\ref{fig:torus flow} as a square with opposite sides identified). 
\begin{figure}[h]
\center
\begin{pspicture}(-3,-2)(4,3)
\psline(-2,-2)(-2,3)
\psline(3,-2)(3,3)

\psline{*->}(-2,3)(0.5,3)	
\psline{-*}(0.4,3)(3,3)


\psline{*->}(-2,2.5)(0.5,2.5)	
\psline{-*}(0.4,2.5)(3,2.5)


\psline{*->}(-2,2)(0.5,2)	
\psline{-*}(0.4,2)(3,2)


\psline{*->}(-2,1.5)(0.5,1.5)	
\psline{-*}(0.4,1.5)(3,1.5)


\psline{*->}(-2,1)(0.5,1)	
\psline{-*}(0.4,1)(3,1)


\psline[linecolor=red]{->}(-2,0.5)(-0.75,0.5)
\psline[linecolor=red]{-*}(-0.78,0.5)(0.4,0.5)
\psline[linecolor=red]{->}(0.4,0.5)(1.75,0.5)
\psline[linecolor=red](1.72,0.5)(3,0.5)
\psdot(-2,0.5)
\psdot(3,0.5)


\psline{*->}(-2,0)(0.5,0)	
\psline{-*}(0.4,0)(3,0)


\psline{*->}(-2,-0.5)(0.5,-0.5)	
\psline{-*}(0.4,-0.5)(3,-0.5)


\psline{*->}(-2,-1)(0.5,-1)	
\psline{-*}(0.4,-1)(3,-1)


\psline{*->}(-2,-1.5)(0.5,-1.5)	
\psline{-*}(0.4,-1.5)(3,-1.5)


\psline{*->}(-2,-2)(0.5,-2)	
\psline{-*}(0.4,-2)(3,-2)

\rput(0.55,0.30){$p$}
\rput(-2.3,1.75){$K$}
\rput(3.3,-0.75){$K$}
\end{pspicture}
\caption{Flow on the torus}
\label{fig:torus flow}
\end{figure}
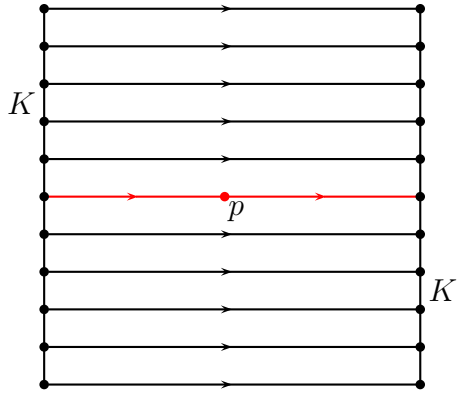
All the points of $K$ are stationary and, in addition, there is a fixed point $p\notin K$. The orbits of all points $x\in T-K$ are homoclinic except
the equilibrium $p$, the orbit finishing in $p$ and the orbit starting in $p$. The region of influence of $K$ is $\mathcal{I}(K)=T-\{p\}.$ The set $%
\mathcal{H}(K)-K$ is not closed in $\mathcal{I}(K)-K.$
\end{example}

The fact that $\mathcal{H}(K)-K$ is not necessarily closed in $\mathcal{I}(K)-K$ (and, hence, $\mathcal{H}(K)$ is not necessarily closed in $\mathcal{I}(K)$) accounts for much of the complexity of the structure of $\mathcal{I}(K)$, specially when compared, for example, with the more simple case of
unstable attractors without external explosions, where $\mathcal{H}(K)$ is indeed closed in $\mathcal{I}(K)=\mathcal{A}(K)$. In spite of this, some
properties of that family of unstable attractors are shared by non-saddle sets. However, if we want to have some understanding of the structure of the
region of influence of non-saddle sets we must acknowledge the existence of a special kind of orbits which are responsible for both the topological and
the dynamical complexity. This we will do in Definition~\ref{dpoint} .

In the sequel we will often use the prolongational limits $J^{+}$, $J^{-}$ and $J^*$. The following result, whose proof is left to the reader,
provides a useful {cha\-rac\-te\-ri\-za\-tion} of attracting, repelling and homoclinic points lying outside $K$.

\begin{proposition}
Let $K$ be an isolated non-saddle set of a flow on $M$ and $x\in M-K$. Then
\begin{enumerate}
\item[i)] $x\in \mathcal{A}(K)$ if and only if $J^{+}(x)\neq \emptyset $ and $
J^{+}(x)\subset K.$

\item[ii)] $x\in \mathcal{R}(K)$ if and only if $J^{-}(x)\neq \emptyset $ and $%
J^{-}(x)\subset K.$

\item[iii)] If $x\in \mathcal{H}(K)$ then $J^{\ast }(x)\neq \emptyset $ and $J^{\ast }(x)\subset K\times K.$ The converse holds if $M$ is compact.
\end{enumerate}
\end{proposition}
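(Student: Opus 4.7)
The plan is to address the three items in order, with (ii) obtained from (i) by time reversal and the forward half of (iii) a direct consequence of (i) and (ii). The genuinely new arguments are the converse of (i) and, under the compactness hypothesis, the converse of (iii).

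For the forward direction of (i), I use the characterization of strong influence from the preceding proposition: since $K$ is non-saddle and $x\in\mathcal{A}(K)-K$, the point $x$ is strongly attracted by $K$, so it has a neighborhood $U_x$ such that, for every neighborhood $V$ of $K$, there is $T\geq 0$ with $U_x[T,\infty)\subset V$. Any sequence $x_n\to x$, $t_n\to\infty$, $x_n t_n\to z$ witnessing an element of $J^+(x)$ then satisfies $x_n\in U_x$ and $t_n\geq T$ eventually, so $z\in\overline{V}$; letting $V$ range over a neighborhood basis of the closed set $K$ gives $J^+(x)\subset K$. Nonemptiness comes from $\omega(x)\neq\emptyset$ (since $x\in\mathcal{A}(K)$) together with the inclusion $\omega(x)\subset J^+(x)$ obtained via the constant sequence $x_n=x$.

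For the converse of (i), which is the technical heart of the proof, I fix an isolating block $N=N^+\cup N^-$ from Proposition~\ref{neighborhood} and choose $y\in J^+(x)\cap K$, realized by $x_n\to x$, $t_n\to\infty$, $x_n t_n\to y$. Non-saddleness provides a neighborhood $V\subset\mathring{N}$ of $K$ with $V\subset N^+\cup N^-$, so for large $n$ either $x_n t_n\in N^+$ or $x_n t_n\in N^-$. The latter case is impossible: negative invariance of $N^-$ and $t_n\to\infty$ force $xs\in N^-$ for every fixed $s$, hence $\gamma(x)\subset N^+\cap N^-=K$, contradicting $x\notin K$. After a subsequence, $x_n t_n\in N^+$ for all $n$. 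I then introduce the \emph{final entry time} $S_n=\inf\{s\geq 0\mid x_n[s,\infty)\subset N\}\leq t_n$, so that $x_n S_n\in N^+$ and, when $S_n>0$, also $x_n S_n\in N^i$. If $S_n\to\infty$ along a subsequence, compactness of $N^i$ provides a limit $z=\lim x_n S_n\in N^i\cap J^+(x)$; but $N^i\subset\partial N$ and $K\subset\mathring{N}$, so $N^i\cap K=\emptyset$ and we get a contradiction with $J^+(x)\subset K$. Hence $S_n$ is bounded, say $S_n\to S_\infty<\infty$ along a subsequence; flow continuity applied to $x_n[S_n,\infty)\subset N$ yields $x[S_\infty,\infty)\subset N$, so $xS_\infty\in N^+$ and $\omega(x)\subset K$ is nonempty, giving $x\in\mathcal{A}(K)$.

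Part (ii) follows from (i) applied to the time-reversed flow. For the forward half of (iii), if $x\in\mathcal{H}(K)$ then taking $x_n=x$ and using $\omega(x)\neq\emptyset$, $\omega^*(x)\neq\emptyset$ with both contained in $K$ produces $(y,z)\in J^*(x)\cap(K\times K)$, so $J^*(x)\neq\emptyset$; the inclusion $J^*(x)\subset K\times K$ follows because the two coordinate projections of $J^*(x)$ sit inside $J^+(x)$ and $J^-(x)$, both in $K$ by (i) and (ii). For the converse under the compactness hypothesis, given $y\in J^+(x)$ with witnesses $x_n\to x$, $t_n\to\infty$, I fix any $s_n\to-\infty$ and use compactness of $M$ to extract a subsequence along which $x_n s_n$ converges to some $z$; then $(y,z)\in J^*(x)\subset K\times K$, so $y\in K$. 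This gives $J^+(x)\subset K$ and, symmetrically, $J^-(x)\subset K$; both are nonempty by projecting $J^*(x)\neq\emptyset$, and (i), (ii) then yield $x\in\mathcal{A}(K)\cap\mathcal{R}(K)=\mathcal{H}(K)$.

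The main obstacle is the converse of (i). The inclusion $\omega(x)\subset K$ drops out of $J^+(x)\subset K$ via the constant sequence, but a priori $\omega(x)$ could be empty even with $J^+(x)$ nonempty, since the witnessing sequences $x_n$ need not equal $x$. Forcing the orbit of $x$ itself to settle inside $N$ is the role of the isolating block and the final-entry-time dichotomy, which is also where the non-saddle assumption is genuinely used.
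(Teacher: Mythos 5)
Your proof is correct; the paper explicitly leaves this proposition to the reader, and your argument (strong attraction for the forward directions, the isolating block $N=N^{+}\cup N^{-}$ with the final-entry-time dichotomy on $N^{i}$ for the converse of (i), time reversal for (ii), and projection of $J^{*}$ onto $J^{+}$ and $J^{-}$ for (iii)) is precisely the intended one. The only points requiring care --- that $x_{n}S_{n}\in N^{i}$ when $S_{n}>0$ (via the isolating block axioms ruling out internal tangencies) and that $x_{n}t_{n}\in N^{-}$ would force $\gamma(x)\subset N$ and hence $x\in K$ --- are both handled correctly.
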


We stress that the previous proposition refers to points $x\in M-K$ only$.$ These properties do not generally hold for points in $K.$

In the following definition we introduce a kind of points which play an {essen\-tial} role in our discussion.

\begin{definition}\label{dpoint}
A point $x\in \mathcal{I}(K)$ is said to be \emph{positively dissonant} if $x\notin\mathcal{A}(K)$ (in which case $x\in \mathcal{R}(K)$) but $J^{+}(x)\cap
K\neq \emptyset $. We also say that the orbit $\gamma (x)$ is positively dissonant. There is a similar definition for \emph{negatively dissonant} points and
orbits. A point $x\notin \mathcal{I}(K)$ and its orbit are said to be \emph{externally dissonant} if $J^{\ast }(x)\cap (K\times K)\neq \emptyset .$ We
denote by $\mathcal{D}$ the set of all dissonant points .
\end{definition}

This definition conveys the idea that positively dissonant points are not attracted by $K$ but, nevertheless, $K$ has a kind of attractive influence
on some points close to them. Externally dissonant points do not belong to the region of influence of $K$ (therefore they are neither atracted nor
repelled) but $K$ has \emph{simultaneously} a kind of attractive and repulsive influence on some points close to them. We remark that a flow on $M$ migth
have an isolated non-saddle set $K$ and points $x$ in $M-\mathcal{I}(K)$ with $J^{+}(x)\cap K\neq \emptyset $ and $J^{-}(x)\cap K\neq \emptyset $ but $%
J^{\ast }(x)\cap (K\times K)=\emptyset .$ Obviously, such points are not externally dissonant.

By using dissonant points we can give a nice dynamical characterization of the closure of the set of homoclinic points.

\begin{proposition}\label{closure}
Let $x$ be a point not contained in $K$. Then $x$ is dissonant if and only if it is in the boundary of $\mathcal{H}(K)$. As a consequence $\overline{\mathcal{H}(K)}=\mathcal{H}(K)\cup\mathcal{D}$ i.e. the closure or $\mathcal{H}(K)$ consists of $K$ and its homoclinic points together with the dissonant points of $K$.
\end{proposition}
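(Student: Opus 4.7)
The plan is to prove the equivalence for $x\notin K$ and then read off the set identity. Two preliminary observations will streamline everything. First, every dissonant point lies in $M\setminus K$: positively and negatively dissonant points lie in $\mathcal{R}(K)\setminus\mathcal{A}(K)$ or $\mathcal{A}(K)\setminus\mathcal{R}(K)$, which miss $K$, while externally dissonant points sit outside $\mathcal{I}(K)\supset K$. In particular $\mathcal{D}\cap\mathcal{H}(K)=\emptyset$. Second, $K\subset\mathcal{H}(K)$ trivially, and by Proposition~\ref{topol} the set $\mathcal{H}(K)-K$ is open, so any $x\in\partial\mathcal{H}(K)$ with $x\notin K$ automatically lies outside $\mathcal{H}(K)$.

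For the forward direction, fix a dissonant $x\notin K$ and, by Proposition~\ref{neighborhood}, choose an isolating block $N=N^{+}\cup N^{-}$ so small that $x\notin N$. If $x$ is positively dissonant, pick $x_{n}\to x$ and $t_{n}\to+\infty$ with $x_{n}t_{n}\to y\in K$; for large $n$ one has $x_{n}t_{n}\in\mathring{N}\subset N^{+}\cup N^{-}$. The possibility $x_{n}t_{n}\in N^{-}$ would force $(x_{n}t_{n})s\in N$ for all $s\in[-t_{n},0]$, and in particular $x_{n}\in N$, contradicting $x_{n}\to x\notin N$. Hence $x_{n}t_{n}\in N^{+}$, giving $\omega(x_{n})\subset K$ and so $x_{n}\in\mathcal{A}(K)$. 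Simultaneously, $x\in\mathcal{R}(K)-K$ is strongly repelled by $K$, so $x_{n}\in\mathcal{R}(K)$ for large $n$; together $x_{n}\in\mathcal{H}(K)$ and $x\in\overline{\mathcal{H}(K)}$. The negatively dissonant case is symmetric. If $x$ is externally dissonant, pick a witness $(y,z)\in J^{\ast}(x)\cap(K\times K)$ and the corresponding sequences $x_{n}\to x$, $t_{n}\to+\infty$, $s_{n}\to-\infty$; the same trapping argument applied in both directions rules out $x_{n}t_{n}\in N^{-}$ and $x_{n}s_{n}\in N^{+}$, so $x_{n}\in N^{+}\cap$(something giving $N^{-}$) forcing $x_{n}\in\mathcal{A}(K)\cap\mathcal{R}(K)=\mathcal{H}(K)$. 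In every subcase $x\notin\mathcal{H}(K)$, so $x\in\partial\mathcal{H}(K)$.

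For the converse, let $x\in\partial\mathcal{H}(K)$ with $x\notin K$; by the preliminaries $x\notin\mathcal{H}(K)$. Take $x_{n}\in\mathcal{H}(K)$ with $x_{n}\to x$. If $x\in\mathcal{I}(K)$ exactly one of $x\in\mathcal{A}^{\ast}(K)$, $x\in\mathcal{R}^{\ast}(K)$ holds. In the first, each $x_{n}\in\mathcal{R}(K)$ has $\omega^{\ast}(x_{n})\subset K$, so I can choose $s_{n}<-n$ with $d(x_{n}s_{n},K)<1/n$; local compactness and compactness of $K$ yield (along a subsequence) $x_{n}s_{n}\to y\in K$, so $y\in J^{-}(x)\cap K$ and $x$ is negatively dissonant. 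The second case is symmetric. If $x\notin\mathcal{I}(K)$, both $\omega(x_{n})\subset K$ and $\omega^{\ast}(x_{n})\subset K$, so choose $t_{n}>n$ and $s_{n}<-n$ with $d(x_{n}t_{n},K),d(x_{n}s_{n},K)<1/n$ and extract a subsequence along which $x_{n}t_{n}\to y\in K$ and $x_{n}s_{n}\to z\in K$; then $(y,z)\in J^{\ast}(x)\cap(K\times K)$, so $x$ is externally dissonant. Finally, for the consequence $\overline{\mathcal{H}(K)}=\mathcal{H}(K)\cup\mathcal{D}$: the inclusion $\supseteq$ combines $\mathcal{H}(K)\subset\overline{\mathcal{H}(K)}$ with the forward direction, and for $\subseteq$ any $x\in\overline{\mathcal{H}(K)}\setminus\mathcal{H}(K)$ lies outside $K$ (since $K\subset\mathcal{H}(K)$), hence in $\partial\mathcal{H}(K)\setminus\mathcal{H}(K)$, so by the reverse direction $x\in\mathcal{D}$.

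The main obstacle is the trapping step in the forward direction: without the decomposition $N=N^{+}\cup N^{-}$ of Proposition~\ref{neighborhood} and the freedom to shrink $N$ so as to exclude $x$, the mere prolongational approach $x_{n}t_{n}\to y\in K$ would not upgrade to the $\omega$-limit containment needed to place $x_{n}$ in $\mathcal{A}(K)$. The strong-influence characterization of points in $\mathcal{R}(K)-K$ and $\mathcal{A}(K)-K$ proved earlier is what supplies the complementary side of $\mathcal{H}(K)=\mathcal{A}(K)\cap\mathcal{R}(K)$ in the positively/negatively dissonant cases.
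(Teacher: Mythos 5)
Your proof is correct and follows essentially the same route as the paper: the boundary-implies-dissonant direction is the same diagonal extraction of $t_n\to\infty$, $s_n\to-\infty$ followed by the trichotomy $\mathcal{A}^*(K)\sqcup\mathcal{R}^*(K)\sqcup(M-\mathcal{I}(K))$, and the dissonant-implies-boundary direction uses the same trapping in a block $N=N^+\cup N^-$ together with strong attraction/repulsion that the paper invokes (mostly implicitly, declaring the positively/negatively dissonant cases ``readily seen''). You simply make explicit the details the paper leaves to the reader.
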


\begin{proof}
Suppose $x$ is in the boundary of $\mathcal{H}(K)$. Then $x$ is the limit of a sequence of points $x_n\in\mathcal{H}(K)$, and hence there exist a subsequence $x_{n_k}$, sequences $t_k\to\infty$ and $s_k\to -\infty$ and points $y,z\in K$ with $x_{n_k}t_k\to y$, $x_{n_k}s_k\to z$. As a consequence, if $x\notin\mathcal{I}(K)$ then $x$ is an externally dissonant point. If $x\in \mathcal{I}(K)-K$  and $x\in\mathcal{A}(K)$ then $x\notin\mathcal{R}(K)$ since, otherwise, $x$ would be in $\mathcal{H}(K)-K$, which is an open set of $M$, and this is in contradiction with $x$ being in the boundary of $\mathcal{H}(K)$. Since $x\notin\mathcal{R}(K)$, $x_{n_k}\to x$ and $x_{n_k}s_k\to z\in K$ we have that $x$ is a negatively dissonant point. An analogous argument applies when $x\in\mathcal{R}(K)$. This proves that $x$ is dissonant whenever $x$ is in the boundary of $\mathcal{H}(K)$ and $x\notin K$. On the other hand if $x\notin \mathcal{I}(K)$ is an externally dissonant point, there exist $x_n\to x$, $t_n\to\infty$, $s_n\to -\infty$ with $x_nt_n\to y$ and $x_ns_n\to z$ and $(y,z)\in K\times K$. Then the points $x_n\in\mathcal{H}(K)$ for all $n$ sufficiently large and, thus, $x$ is in the boundary of $\mathcal{H}(K)$. Positively and negatively dissonant points are readily seen to belong to the boundary of $\mathcal{H}(K)$.      
\end{proof}

\begin{remark}
Observe that if $K$ does not have dissonant points and $\mathcal{H}(K)-K\neq\emptyset$, then $\mathcal{H}(K)$ is compact and hence an isolated non-saddle set. Moreover, $\mathcal{H}(K)$ is the smallest simple non-saddle set containing $K$ and its region of influence agrees with $\mathcal{I}(K)$.
\end{remark}

The next result deepens into the topological structure of $\mathcal{I}(K)-K$.

\begin{proposition}\label{topological structure}
Let $K$ be an isolated non-saddle set of a flow $\varphi$ defined on a locally compact ANR $M$ and $N=N^{+}\cup N^{-}$ an isolating block of $K$ contained in $\mathcal{I}(K)$. Then, $\mathcal{I}(K)-K$ has a finite number of connected components. Moreover, if a component $C$ does not contain dissonant points the restriction flow $\varphi|C$ is parallelizable with section a component of $\partial N$. Hence, the flow provides a strong deformation retraction of $C$ onto a component of $\partial N\cap C$. Conversely, if the restriction flow $\varphi|C$ to a component $C$ of $\mathcal{I}(K)-K$ is parallelizable, then $C$ does not contain dissonant points.  
\end{proposition}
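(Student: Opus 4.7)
My plan is to handle the four assertions of the proposition in succession: finiteness, a dichotomy for a dissonance-free component, construction of the section and retraction, and the converse.

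\emph{Finiteness.} Every orbit of $\mathcal{I}(K)-K$ must meet $\partial N$: an orbit contained in $N$ would lie in the invariant set $N^{+}\cap N^{-}$, which equals the maximal invariant subset $K$ of $N$, forcing $x\in K$. Since $M$ is a locally compact ANR, hence locally connected, $\mathcal{I}(K)-K$ is open and each of its components is open and flow-invariant. Any component of $\partial N$ meeting a component $C$ lies entirely in $C$ by connectedness, so distinct components of $\mathcal{I}(K)-K$ contain disjoint nonempty families of components of $\partial N$. Taking the isolating block to be a compact ANR gives $\partial N$ finitely many components, and the bound follows.

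\emph{Dichotomy and section.} Suppose $C$ contains no dissonant points. Proposition~\ref{closure} yields $\overline{\mathcal{H}(K)}\cap C=\mathcal{H}(K)\cap C$, so this set is closed in $C$; Proposition~\ref{topol}(i) says it is open. Connectedness of $C$ then forces either $C\subset\mathcal{H}(K)-K$ or $C\cap\mathcal{H}(K)=\emptyset$, and in the second case Proposition~\ref{topol}(ii) partitions $C$ into the disjoint closed sets $C\cap\mathcal{A}^*(K)$ and $C\cap\mathcal{R}^*(K)$, forcing $C$ into one of them. I treat $C\subset\mathcal{A}^*(K)$ in detail (the $\mathcal{R}^*$ case is symmetric, and in the homoclinic case each orbit still enters $N^+$ exactly once through $N^i$, so the same construction works). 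For $x\in C$ let $\tau(x)$ be the unique time the orbit enters $N^+$ (unique by positive invariance of $N^+$, continuous by transversality of $N^i$), and set $\sigma(x)=x\cdot\tau(x)\in N^i$. Because $N^o\subset N^-\subset\mathcal{R}(K)\cup K$ while $C$ avoids $\mathcal{R}(K)\cup K$, we have $C\cap\partial N=C\cap N^i=\sigma(C)$. This is connected (continuous image of connected $C$) and simultaneously a union of components of $\partial N$, hence a single component $E$ of $\partial N$ contained in $N^i$.

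\emph{Parallelizability and retraction.} The map $(s,t)\mapsto s\cdot t$ from $E\times\mathbb{R}$ to $C$ is continuous and bijective (orbits in $C$ are non-periodic, since a periodic orbit would be captured in $K$) with continuous inverse $x\mapsto(\sigma(x),-\tau(x))$; thus $\varphi|_C$ is parallelizable with section $E$, a component of $\partial N$. The homotopy $H(x,r)=\sigma(x)\cdot(-(1-r)\tau(x))$ then provides the announced strong deformation retraction of $C$ onto $E$.

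\emph{Converse.} Conversely, assume $\varphi|_C$ is parallelizable via a homeomorphism $h\colon\Sigma\times\mathbb{R}\to C$, and suppose for contradiction that $x\in C$ is positively dissonant, with $x_n\to x$, $t_n\to\infty$, $x_n\cdot t_n\to k\in K$. Setting $(s_n,\tau_n)=h^{-1}(x_n)\to(s_x,\tau_x)=h^{-1}(x)$ and $T_n=\tau_n+t_n\to\infty$, the section $\sigma$ being constant along orbits gives $s_n=\sigma(x_n\cdot t_n)\to s_x$; openness of $\mathcal{R}(K)\cap C$ in $C$ (its complement being the closed set $\mathcal{A}^*(K)\cap C$) places $s_n$ in $\mathcal{R}(K)$ for large $n$. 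I then leverage parallelizability, equivalent to dispersiveness, to show that the sequence $(s_n,T_n)$'s escape through the $+\infty$ end of $\Sigma\times\mathbb{R}$ while $s_n\to s_x$ forces the limit $k$ to be attained by the orbit through $s_x$ as well, so $k\in\omega(x)$; invariance and closedness of $\omega(x)$ combined with the parallelized structure then push $\omega(x)$ into $K$, contradicting $x\notin\mathcal{A}(K)$. A symmetric argument handles negatively dissonant points. The main obstacle is precisely this last step: the asymptotic diagonalization needed to transfer the limit $k$ from the sequence of neighboring orbits to the orbit through $s_x$ requires a delicate use of continuity of $h$ at the $\pm\infty$ ends of $\Sigma\times\mathbb{R}$. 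The cleanest route, following Bhatia--Szego, is to invoke the equivalence of parallelizability with dispersiveness and compare $J^+$ computed inside $C$ with $J^+$ computed in $M$; the forward assertions, in contrast, are straightforward section/flow-box arguments once the dichotomy is established.
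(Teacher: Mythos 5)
Your finiteness argument and your treatment of a dissonance\-/free component are essentially sound, and in the forward direction you take a genuinely more self-contained route than the paper: the paper establishes the same trichotomy $C\subset\mathcal{A}^*(K)$, $C\subset\mathcal{R}^*(K)$ or $C\subset\mathcal{H}(K)-K$ via Propositions~\ref{topol} and~\ref{closure} and then simply quotes the parallelizability of basins of attraction/repulsion and of unstable attractors without external explosions, whereas you build the section map $\sigma$ and the homeomorphism $E\times\mathbb{R}\to C$ by hand. Two points you should not leave implicit, though: the continuity of the final entrance time $\tau$ needs the absence of tangencies, which for a block of the form $N=N^+\cup N^-$ follows from $N^i\cap N^o=N^+\cap N^-\cap\partial N=K\cap\partial N=\emptyset$; and you cannot justify finiteness by ``taking the isolating block to be a compact ANR'' --- $N$ is given, and even for a chosen block this is not automatic in a locally compact ANR. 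The paper proves it by exhibiting $\partial N$ as a retract, via the flow, of the open set $\mathring{N}_0-K$ for a larger block $N_0$, hence an ANR with finitely many components.

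The genuine gap is in the converse. From $x_n\to x$, $t_n\to\infty$, $x_nt_n\to k\in K$ you pass to $h^{-1}(x_nt_n)=(s_n,T_n)$ with $s_n\to s_x$ and $T_n\to\infty$; but this is perfectly consistent with parallelizability --- it merely says that $x_nt_n$ leaves every compact subset of $C$, which it must, since its limit $k$ lies in $K\subset M-C$. Dispersiveness of $\varphi|C$ only forbids prolongational limit points \emph{inside} $C$, and your limit point is outside $C$, so no contradiction is available at this stage. The step ``forces the limit $k$ to be attained by the orbit through $s_x$, so $k\in\omega(x)$'' is false: $h$ need not be uniformly continuous near the $+\infty$ end of $\Sigma\times\mathbb{R}$, and $J^+(x)$ is in general strictly larger than $\omega(x)$ --- indeed the very definition of a positively dissonant point is that $J^+(x)\cap K\neq\emptyset$ while $x\notin\mathcal{A}(K)$. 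The missing idea, which is exactly what the paper's proof supplies, is to manufacture a prolongational limit point \emph{inside} $C$: by Proposition~\ref{closure} the dissonant point $x$ is a limit of homoclinic points $x_n\in C$; each homoclinic orbit meets the compact set $N^i\cap C$ at a unique time $t_n'$, so after passing to a subsequence $x_nt_n'\to y\in N^i\cap C\subset C$; and $t_n'\to+\infty$ because $x\notin\mathcal{A}(K)$ forces the entrance times of nearby orbits into $N^+$ to be unbounded. Then $y\in J^+(x)$ relative to $C$, contradicting the dispersiveness of the parallelizable flow $\varphi|C$. Without replacing $k$ by such a point $y\in C$, your argument does not close.
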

 
\begin{proof}
It is easy to see that $N$ lies in the interior of another isolating block $N_{0}=N_{0}^{+}\cup N_{0}^{-}$ and that there exists a retraction of $\mathring{N}_0-K$ onto $\partial N$ provided by the flow. As a consequence, $\partial N$ is an ANR and hence, it has a finite number of components. Since every component of $\mathcal{I}(K)-K$ contains at least one component of $\partial N$ there must be a finite number of them. 

Suppose that $C$ is a component of $\mathcal{I}(K)-K$ which does not contain dissonant points. Then, by Proposition~\ref{topol} and Proposition~\ref{closure},  $C$ is contained in one of the subsets $\mathcal{A}^*(K)$, $\mathcal{R}^*(K)$ or $\mathcal{H}(K)-K$ and, hence, $K$ is either a global attractor or a global repeller or a global unstable attractor with no external explosions respectively for the restriction flow $\varphi|C\cup K$. Then, by \cite{Bhatia-Szego} and \cite{Sanchez-Gabites Transactions} the flow in $C$ is parallelizable with section a component of $\partial N$.

Conversely, assume that $\varphi|C$ is parallelizable. We see that $C$ does not contain dissonant points. Choose an isolating block $N=N^+\cup N^-$ of $K$. Suppose, arguing by contradiction, that $x\in C$ is a positively dissonant point the other case being analogous. Then there exists a sequence of homoclinic points $x_n\in C$ such that $x_n\to x$. For each $n$ choose $t_n$ to be the unique time for which $x_nt_n\in N^i\cap C$. Since $N^i\cap C$ is compact we may assume that $x_nt_n\to y\in N^i\cap C$. Besides, since $x$ is positively dissonant then $x\notin\mathcal{A}(K)$ and, as a consequence, $t_n\to +\infty$ in contradiction with $\varphi|C$ being parallelizable. 
\end{proof}

\begin{remark}
For the second part of Proposition~\ref{topological structure} there is no need of $M$ to be an ANR. It also holds if $M$ is a locally compact metric space.
\end{remark}

The existence of positively, negatively and externally dissonant points is mutually related as the following result shows.

\begin{proposition}\label{three}
If an isolated non-saddle set $K$ has externally dissonant points, then it has also positively and negatively dissonant points. Conversely, if $M$ is compact and $K$ has either positively or negatively dissonant points in $\mathcal{I}(K)$ then $K$ has externally dissonant points.
\end{proposition}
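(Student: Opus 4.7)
The plan has two halves.

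For the direct implication, fix an isolating block $N=N^+\cup N^-$ of $K$ given by Proposition~\ref{neighborhood}. Let $x$ be externally dissonant with witness sequences $x_n\to x$, $t_n\to\infty$, $s_n\to-\infty$, $x_n t_n\to y\in K$, $x_n s_n\to z\in K$. Since $N\subset\mathcal{I}(K)$ and $x\notin\mathcal{I}(K)$, invariance of $\mathcal{I}(K)$ gives $\gamma(x)\cap N=\emptyset$. Define the first-entry times $\tau_n=\inf\{t>0:x_n t\in N\}$. A standard compactness argument shows $\tau_n\to\infty$, for otherwise a subsequential limit would put some $xT$ in $N\subset\mathcal{I}(K)$. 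Passing to a subsequence, set $p^*=\lim x_n\tau_n\in N^i$. Because $x_n(\tau_n-\varepsilon)\notin N$ for every $\varepsilon\in(0,\tau_n)$, the backward trajectory satisfies $\gamma^-(p^*)\subset\overline{M\setminus N}=M\setminus\mathring{N}$, and hence stays bounded away from $K\subset\mathring{N}$. This immediately forces $p^*\notin N^-$, so $p^*\in N^+\subset\mathcal{A}(K)$, and also $\omega^*(p^*)\cap K=\emptyset$, so $p^*\notin\mathcal{R}(K)$; thus $p^*\in\mathcal{A}^*(K)$. Finally, $(x_n\tau_n)(s_n-\tau_n)=x_n s_n\to z\in K$ with $s_n-\tau_n\to-\infty$ gives $z\in J^-(p^*)\cap K$, so $p^*$ is negatively dissonant. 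The symmetric argument with last-exit times $\sigma_n=\sup\{t<0:x_n t\in N\}$ produces a positively dissonant point.

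For the converse, assume $M$ compact and $p$ positively dissonant (the negatively dissonant case is symmetric). Then $\omega(p)$ is non-empty and not contained in $K$. A standard argument using openness and invariance of $\mathcal{A}(K)$ shows $\omega(p)\cap\mathcal{A}(K)=\emptyset$ (otherwise some $pt$ would lie in $\mathcal{A}(K)$, forcing $p\in\mathcal{A}(K)$). If $\omega(p)\not\subset\mathcal{I}(K)$, pick $x\in\omega(p)\setminus\mathcal{I}(K)$; one then builds sequences witnessing $J^*(x)\cap(K\times K)\neq\emptyset$. When $\omega(p)\cap K\neq\emptyset$ one can work entirely with the orbit of $p$, choosing $\tau_n$ so that $(pt_n)\tau_n=pw_{m(n)}\to k\in K$ along appropriate $w_{m(n)}\to\infty$, together with $\omega^*(p)\subset K$ for the backward times. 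When $\omega(p)\cap K=\emptyset$ the same argument shows $\omega(p)\cap\mathcal{R}(K)=\emptyset$ (as $\omega^*(x)\subset K\cap\omega(p)=\emptyset$ is impossible for $x\in\omega(p)$), so $\omega(p)\subset M\setminus\mathcal{I}(K)$ and one uses a divergence-time construction: from $p_m\to p$ and $p_m u_m\to y\in K$ given by $J^+(p)\cap K$, take $V_m$ the largest time on which $p$ and $p_m$ remain within $1/m$, set $v_m=\min(V_m,u_m/2)\to\infty$, and let $x=\lim p_m v_m=\lim p v_m\in\omega(p)$; then $y\in J^+(x)\cap K$ and, using $\omega^*(p_m)\subset K$, a point $z\in J^-(x)\cap K$ is obtained.

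The remaining case $\omega(p)\subset\mathcal{I}(K)$ must be excluded. Since $\omega(p)\cap\mathcal{A}(K)=\emptyset$ this gives $\omega(p)\subset\mathcal{R}(K)$, whence every $x\in\omega(p)$ has $\omega^*(x)\subset K\cap\omega(p)=:R$, and compactness forces $R\neq\emptyset$. Viewing $\omega(p)$ as a compact invariant set, $R$ is a repeller (equivalently, an attractor for the reversed flow). For the Conley attractor-repeller decomposition, the dual attractor $A=\{x\in\omega(p):\omega^*(x)\cap R=\emptyset\}$ must equal $\emptyset$ because $\omega^*(x)\subset R$ and $\omega^*(x)\neq\emptyset$ for every $x$. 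But then the decomposition $\omega(p)=A\cup R\cup C(A,R)$ together with the fact that any connecting orbit would need $\omega\subset A=\emptyset$ (impossible by compactness) forces $\omega(p)=R\subset K$, contradicting $\omega(p)\not\subset K$. So Case 2 is impossible, and an externally dissonant point must exist.

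The main difficulty is this last case, where no direct construction produces an externally dissonant point and the assumption has to be ruled out by a global dynamical argument on $\omega(p)$; the Conley-type reasoning is what uses compactness of $M$ essentially. The other delicate points are verifying $p^*\in N^+$ rather than $N^-$ in the direct implication (handled by the $\gamma^-(p^*)\cap\mathring{N}=\emptyset$ observation combined with $N=N^+\cup N^-$) and, in the divergence-time subcase, guaranteeing both $v_m\to\infty$ and $u_m-v_m\to\infty$, which rests on the subcase hypothesis $\omega(p)\cap K=\emptyset$ that gives a uniform lower bound on $d(pt,K)$ for large $t$.
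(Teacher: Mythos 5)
Your direct implication is correct and essentially the paper's argument: both locate the positively and negatively dissonant points on the exit and entrance sets of a block $N=N^+\cup N^-$. The only difference is that the paper first replaces the witness sequences of $J^*$ by nearby homoclinic orbits (via Proposition~\ref{closure}), whereas you work with the raw sequences and first-entry times; your verification that $p^*\in N^+$ and $p^*\notin\mathcal{R}(K)$ is sound.

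The converse is where the trouble is, and the central soft spot is the assertion $\omega^*(p_m)\subset K$. The $p_m$ are only witnesses of $J^+(p)\cap K\neq\emptyset$; nothing in their definition controls their backward orbits. The claim is true, but only because $p\in\mathcal{R}(K)-K$ is \emph{strongly} repelled by $K$, so an entire neighborhood $U_p$ satisfies $w(-\infty,-T_V]\subset V$ for every neighborhood $V$ of $K$ --- this is exactly where non-saddleness enters and it cannot be waved away. It matters because external dissonance demands $J^*(x)\cap(K\times K)\neq\emptyset$, i.e.\ a \emph{single} sequence $q_m\to x$ whose forward and backward translates both fall into $K$; the paper explicitly remarks that $J^+(x)\cap K\neq\emptyset$ together with $J^-(x)\cap K\neq\emptyset$ is strictly weaker, and your concluding sentence only claims the weaker pair. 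To close the argument you must produce, for the one sequence $q_m=p_mv_m$, backward times $b_m\to-\infty$ with $q_mb_m\to K$, which is what the uniform strong repulsion on $U_p$ supplies. Separately, your case analysis is largely spurious: for a positively dissonant $p$ one always has $\omega(p)\cap K=\emptyset$ (if $pt_n\to K$ with $t_n\to\infty$, then $pt_n\in N^+$ forces $p\in\mathcal{A}(K)$ while $pt_n\in N^-$ for all large $n$ forces $p\in K$), whence $\omega(p)\cap\mathcal{R}(K)=\emptyset$ and $\omega(p)\subset M\setminus\mathcal{I}(K)$ automatically. Thus the sub-case $\omega(p)\cap K\neq\emptyset$ and the whole case $\omega(p)\subset\mathcal{I}(K)$ are vacuous, and the attractor--repeller machinery --- whose key hypothesis, that $R=K\cap\omega(p)$ is a repeller in $\omega(p)$, you never verify and which does not follow merely from all backward limit sets lying in $R$ --- is both unjustified and unnecessary. (Also, $\mathcal{A}(K)$ is not open at points of $K$ unless $K$ is an attractor; you need openness of $\mathcal{A}(K)-K$ plus the $N^+\cup N^-$ argument for limit points in $K$.) For comparison, the paper's converse is shorter: it takes any $y\in\omega^*(x)$ for a negatively dissonant $x$ and approximates $y$ by points on homoclinic orbits, whose forward and backward orbits land in $K$ by construction.
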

\begin{proof}
Suppose that $x$ is an externally dissonant point. Then there exists a sequence $x_n$ in $\mathcal{H}(K)-K$ such that $x_n\to x$. Let $N=N^+\cup N^-$ be an isolating block of $K$ (which must be contained in $\mathcal{I}(K)$). For almost all $n$ there exists $t_n>0$ such that $x_nt_n\in N^i$. By compactness we may assume that $x_nt_n\to y\in N^i\subset\mathcal{A}(K)-K$. The sequence $t_n\to+\infty$ since, otherwise it would have a bounded subsequence $t_{n_k}$ such that $t_{n_k}\to t_0$. Then, $x_{n_k}t_{n_k}\to y=xt_0$ which is a contradiction since $xt_0\notin\mathcal{I}(K)$. As a consequence $y\in J^+(x)$ and by \cite{Bhatia-Szego} $x\in J^-(y)$. Then $J^-(y)\cap K\neq\emptyset$ but $J^-(y)\nsubseteq K$ and, hence, $y$ is a negatively dissonant point. An analogous argument leads us to find a positively dissonant point.

For the converse, suppose that $M$ is compact and there exists a negatively dissonant point $x$. Then there exists a sequence $x_n$ in $\mathcal{H}(K)-K$ such that $x_n\to x$. By assumption, $\emptyset\neq\omega^*(x)\subset M-\mathcal{I}(K)$. Let $y\in\omega^*(x)$. Then there exists a sequence $t_n\to-\infty$ such that $xt_n\to y$. Given $\epsilon>0$, it is possible to choose subsequences $x_{n_k}$ and $t_{n_k}$ such that $d(x_{n_k}t_{n_k},xt_{n_k})<\epsilon/2$ for $k$ larger than a certain $k_0$. Moreover, there exists $k_1$ such that if $k\geq k_1$, then $d(xt_{n_k},y)<\epsilon/2$. As a consequence $d(x_{n_k}t_{n_k},y)\leq d(x_{n_k}t_{n_k},xt_{n_k})+d(xt_{n_k},y)<\epsilon$, for $k\geq\max\{k_0,k_1\}$. This proves that the point $y$ is externally dissonant. If the point $x$ is chosen to be positively dissonant the argument is analogous.
\end{proof}

\begin{remark} 
In the previous proposition it is proved that points $x\notin\mathcal{I}(K)$ which are $\omega^*$-limits of negatively dissonant points
are externally dissonant, and the same is true for $\omega$-limits of positively dissonant points, but it can be easily shown that the converse
does not hold. In fact, there are externally dissonant points which are neither $\omega $-limits nor $\omega ^{\ast }$-limits of points of $\mathcal{%
I}(K)$. However, in the same proposition it is proved the weaker property that externally dissonant points lie in the positive prolongational limit ($
J^{+}$) of positively dissonant points and in the negative prolongational limit ($J^{-}$) of negatively dissonant points.
\end{remark}

\begin{example}
Our previous Example~\ref{example1} can be presented in a more general way. Consider the vector field in the torus induced by the differential equation in the square $I^2=[0,1]^2\subset\mathbb{R}^2$ defined by
\begin{equation*}
\begin{cases}
\dot{x}=\psi(x)(x^2+y^2-\frac{1}{4})\\
\dot{y}=0
\end{cases}
\end{equation*}
Where $\psi:[-1,1]\to [0,1]$ is a smooth function satisfying that $\psi(x)=1$ if $x\in[-1/2,1/2]$ and $\psi(x)=0$ if and only if $x=\pm 1$.

\begin{figure}[h]
\center
\begin{pspicture}(-2,-2)(3,3)

\psline(-2,-2)(-2,3)
\psline(3,-2)(3,3)

\psline{*->}(-2,3)(0.5,3)	
\psline{-*}(0.4,3)(3,3)

\pscircle[linecolor=blue](0.5,0.5){1.25}

\psline[linecolor=red](-2,-0.75)(-0.6,-0.75)
\psline[linecolor=red]{>-*}(-0.75,-0.75)(0.5,-0.75)
\psline[linecolor=red]{->}(0.5,-0.75)(1.75,-0.75)
\psline[linecolor=red](1.70,-0.75)(3,-0.75)
\psdot(-2,-0.75)
\psdot(3,-0.75)

\psline[linecolor=red]{->}(-2,1.75)(-0.6,1.75)
\psline[linecolor=red]{-*}(-0.75,1.75)(0.5,1.75)
\psline[linecolor=red]{->}(0.5,1.75)(1.75,1.75)
\psline[linecolor=red](1.70,1.75)(3,1.75)
\psdot(-2,1.75)
\psdot(3,1.75)


\psline(-0.55,1.125)(0.5,1.125)
\psline{<-}(0.4,1.125)(1.55,1.125)

\psline(-0.55,-0.125)(0.5,-0.125)
\psline{<-}(0.4,-0.125)(1.55,-0.125)

\psline(-0.7,0.5)(0.5,0.5)
\psline{<-}(0.4,0.5)(1.7,0.5)

\psline{*->}(-2,0.5)(-1.2,0.5)
\psline(-1.3,0.5)(-0.76,0.5)
\psline{->}(1.76,0.5)(2.5,0.5)
\psline{-*}(2.4,0.5)(3,0.5)

\psline{*->}(-2,-1.375)(0.5,-1.375)
\psline{-*}(0.4,-1.375)(3,-1.375)

\psline{*->}(-2,-0.125)(-1,-0.125)	
\psline(-1.2,-0.125)(-0.6,-0.125)

\psline{*->}(-2,1.125)(-1,1.125)	
\psline(-1.2,1.125)(-0.6,1.125)

\psline{->}(1.6,1.125)(2.2,1.125)	
\psline{-*}(2,1.125)(3,1.125)

\psline{->}(1.6,-0.125)(2.2,-0.125)	
\psline{-*}(2,-0.125)(3,-0.125)

\psline{*->}(-2,2.375)(0.5,2.375)	
\psline{-*}(0.4,2.375)(3,2.375)

\psline{*->}(-2,-2)(0.5,-2)	
\psline{-*}(0.4,-2)(3,-2)

\rput(0.65,1.9){\tiny $p$}
\rput(0.65,-0.9){\tiny $p'$}
\rput(-2.3,1.75){$K$}
\rput(3.3,-0.75){$K$}
\end{pspicture}
\caption{Saddle-node bifurcations at $p$ and $p'$}
\end{figure}
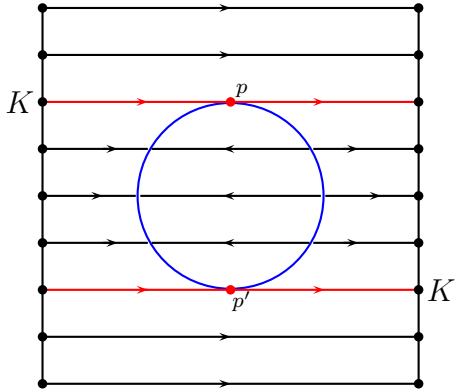

The phase diagram of this flow can be understood as a collective phase diagram of a saddle-node bifurcation at the point $p$ (and symetrically at the point $p^{\prime }$). All points contained in the blue circle are fixed. The bifurcation points agree in this example with the externally dissonant points. The boundary of the region of influence of $K$ separates the torus into two connected components.
\end{example}

We will see that the previous example is only a particular instance of a much more general situation.

S\'{a}nchez-Gabites proved in \cite{Sanchez-Gabites Transactions} a topological result which he used to {cons\-truct} families of unstable attractors in manifolds and which is useful
as well in the present context. The proof of the result makes use of a classical theorem by Thom \cite{Thom} about representation of homology classes. We
reproduce its statement here because, originally, this result is contained in the proof of a different proposition and it is not formulated separately.

\begin{proposition}\label{Gabites}
(S\'{a}nchez-Gabites). Let $M$ be a connected oriented closed smooth $n$-manifold with $H^{1}(M)\neq \{0\}$ (or, equivalently, with $H_{n-1}(M)\neq \{0\}$).
Then there exists a connected oriented closed smooth hypersurface $Z\subset M $ {sa\-tis\-fying} the following conditions:
\begin{enumerate}
\item[i)] $Z$ has a product neighborhood in $M$, i.e. there exists an open
neighborhood $U$ and a homeomorphism $h:Z\times \mathbb{R}\rightarrow U$
with $h(z,0)=z$ for every $z\in Z.$

\item[ii)] $M-U$ is connected.
\end{enumerate}
\end{proposition}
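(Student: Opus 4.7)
The plan is to produce $Z$ by geometrically realizing a nontrivial integral codimension-one homology class in $M$, and to read off both the product neighborhood and the non-separation property from the fact that this class is nonzero.

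First, I would translate the hypothesis into a geometric cycle. Since $M$ is connected, closed and oriented, Poincar\'e duality gives $H^{1}(M;\mathbb{Z})\cong H_{n-1}(M;\mathbb{Z})$, so $H^{1}(M)\neq\{0\}$ furnishes a nonzero class $\alpha\in H_{n-1}(M;\mathbb{Z})$. By Thom's representability theorem every integral class in codimension one is represented by a closed oriented smooth submanifold, so there exists a closed oriented smooth hypersurface $Y\subset M$ with $[Y]=\alpha$. Writing $Y=Y_{1}\sqcup\cdots\sqcup Y_{k}$ as a disjoint union of connected components gives $\alpha=\sum_{i=1}^{k}[Y_{i}]$, and at least one $[Y_{i}]\neq 0$. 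I would set $Z:=Y_{i}$ for such an index. Then $Z$ is connected, closed, oriented and $[Z]\neq 0$ in $H_{n-1}(M;\mathbb{Z})$.

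For condition i), the normal bundle of $Z$ inside $M$ is an orientable real line bundle, hence trivial, so the tubular neighborhood theorem provides an open set $U\supset Z$ and a diffeomorphism $h:Z\times\mathbb{R}\to U$ with $h(z,0)=z$. For condition ii), the key fact is that $Z$ cannot separate $M$: if $M-Z=V_{1}\sqcup V_{2}$ were a separation, then $\overline{V_{1}}$ would be a compact oriented $n$-manifold with boundary $Z$, and the fragment
\[
H_{n}(\overline{V_{1}})=\{0\}\longrightarrow H_{n}(\overline{V_{1}},Z)\stackrel{\partial}{\longrightarrow} H_{n-1}(Z)\longrightarrow H_{n-1}(\overline{V_{1}})
\]
of the long exact sequence of the pair, together with the fact that $\partial$ sends the relative fundamental class of $(\overline{V_{1}},Z)$ to the fundamental class $[Z]$ of $Z$, would force $[Z]=0$ in $H_{n-1}(\overline{V_{1}})$ and therefore also in $H_{n-1}(M)$, contradicting the choice of $Z$. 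Hence $M-Z$ is connected, and a standard deformation retraction of $M-Z$ onto $M-U$ obtained by pushing each bicollar fibre $\{z\}\times(\mathbb{R}-\{0\})$ outward along the product structure transfers this to the required connectedness of $M-U$.

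I expect the most delicate ingredient to be the non-separation step, where one must be careful with orientations so that the boundary map $\partial$ hits an actual generator of $H_{n-1}(Z)\cong\mathbb{Z}$ and forces genuine vanishing rather than a torsion relation. Once this Poincar\'e--Lefschetz bookkeeping is in place, the rest of the argument is essentially a packaging of standard smooth-topology facts about oriented hypersurfaces: representability in codimension one, triviality of oriented line bundles, and the tubular neighborhood theorem.
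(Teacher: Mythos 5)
Your argument is correct and follows exactly the route the paper indicates: the paper gives no proof of this proposition, merely quoting it from S\'anchez-Gabites and noting that the original proof rests on Thom's theorem on representability of homology classes, which is precisely the engine of your construction (realize a nonzero class of $H_{n-1}(M;\mathbb{Z})$ by a closed oriented hypersurface, pass to a component with nonzero class, trivialize the oriented normal line bundle, and rule out separation via the long exact sequence of $(\overline{V_{1}},Z)$ and the fact that $\partial$ carries the relative fundamental class to $[Z]$). The only detail worth making explicit is that $U$ should be chosen as a tubular neighborhood whose closure is diffeomorphic to $Z\times[-1,1]$, so that pushing outward along the collar genuinely deformation retracts $M-Z$ onto $M-U$; with that choice everything you wrote goes through.
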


It turns out that Proposition~\ref{Gabites} is useful to construct non-saddle sets. The next result illustrates that non-saddle sets with dissonant points are
indeed abundant. Its proof is, simply, an adaptation of the argument given in \cite{Sanchez-Gabites Transactions} for unstable attractors without external explosions.

\begin{theorem}\label{existence}
Let $M$ be a connected closed oriented smooth $n$-manifold with $n\geq 2$. If $H^{1}(M)\neq \{0\}$ then there exists a flow on $M$ having a connected isolated non-saddle set
with dissonant points.
\end{theorem}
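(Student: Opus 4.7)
The plan is to adapt the construction in \cite{Sanchez-Gabites Transactions} of an unstable attractor with no external explosions, modifying the local dynamics around the model hypersurface $Z$ so that $K$ becomes non-saddle (attracting on one side, repelling on the other) rather than a pure attractor, and planting a semi-stable fixed point in $M-U$ to generate a dissonant point. The hypothesis $H^{1}(M)\neq\{0\}$ enters only to feed Proposition~\ref{Gabites}.

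First, I would apply Proposition~\ref{Gabites} to obtain a connected closed oriented smooth hypersurface $Z\subset M$, a product neighborhood parametrized by $h:Z\times\mathbb{R}\to U$, and a connected complement $M-U$. Set $K:=h(Z\times\{0\})$. On $U$, pull back via $h$ the vector field $\phi(s)\,\partial/\partial s$, where $\phi:\mathbb{R}\to[0,\infty)$ is smooth, strictly positive off $0$, and vanishes at $0$ to infinite order (so that integral curves need infinite time to approach or leave $Z$). Then every point of $K$ is fixed, $h(Z\times(-\infty,0))\subset\mathcal{A}(K)$, and $h(Z\times(0,+\infty))\subset\mathcal{R}(K)$.

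Next, extend the vector field smoothly to $M-U$ so that orbits leaving $U$ through its ``$+\infty$-end'' cross $M-U$ and re-enter via the ``$-\infty$-end''; this global routing is possible precisely because $M-U$ is connected. Further, implant a fixed point $p\in\mathrm{int}(M-U)$ of semi-stable type, modelled locally in a transverse coordinate by $\dot{x}=f(x)$ with $f(0)=0$ and $f>0$ elsewhere (as in the torus example of the paper), so that generic nearby orbits in $M-U$ slow down near $p$, bypass it, and complete their transit from one end of $U$ to the other. Then $K$ is connected and invariant by construction, and $N:=h(Z\times[-\varepsilon,\varepsilon])$ is an isolating neighborhood for small $\varepsilon$, since the maximal invariant subset of $N$ is $K$: every $x=h(z,s)$ with $s\neq 0$ exits $N$ in finite time. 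For non-saddleness, given any neighborhood $W$ of $K$ choose $\varepsilon$ with $N\subset W$ and let $V=\mathrm{int}\,N$; any $x=h(z,s)\in V$ has $\gamma^{+}(x)\subset W$ if $s\le 0$ and $\gamma^{-}(x)\subset W$ if $s\ge 0$, which is exactly the negation of saddleness. For the existence of a dissonant point, the semi-stable construction places homoclinic orbits arbitrarily close to $p$, so $p\in\overline{\mathcal{H}(K)}$; but $\omega(p)=\omega^{*}(p)=\{p\}\not\subset K$ gives $p\notin\mathcal{H}(K)$, whence Proposition~\ref{closure} yields $p\in\mathcal{D}$.

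The main obstacle is the technical construction of the smooth global vector field: simultaneously producing a flow on $M-U$ that carries generic orbits from one end of $U$ to the other, implanting the semi-stable fixed point of the correct type, and smoothly matching the model on $U$ across the collars of $\overline{U}$. This is precisely what is done in the unstable-attractor case of \cite{Sanchez-Gabites Transactions}; the only new ingredient is the replacement of the one-sided attracting end-behavior used there by the two-sided transverse behavior that turns $K$ into a non-saddle set rather than a mere attractor.
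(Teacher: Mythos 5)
Your overall strategy (feed $H^{1}(M)\neq\{0\}$ into Proposition~\ref{Gabites}, build a non-saddle set out of the hypersurface data, and manufacture a dissonant point by breaking/implanting a fixed point on a homoclinic orbit) is in the right spirit, but the step you yourself flag as ``the main obstacle'' is a genuine gap, and the appeal to \cite{Sanchez-Gabites Transactions} does not fill it. You take $K=h(Z\times\{0\})$ and then need a flow on the compact cobordism $W=M-U$, with boundary two copies of $Z$, that enters through one end, exits through the other, and carries orbits across. If \emph{all} orbits were to transit, the flow itself would exhibit $W$ as a product $Z\times[0,1]$, which is false for a general $M$; if only \emph{generic} orbits transit, you must produce a vector field on $W$ pointing inward along $Z_{+}$ and outward along $Z_{-}$ whose zeros you control, and Poincar\'e--Hopf for manifolds with boundary forces the total index of those zeros to equal $\chi(W,Z_{-})$, which need not be realizable by the single semi-stable point you implant (whose local model you only specify in one transverse coordinate, so its index in dimension $n\geq 2$ is not even pinned down). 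Connectedness of $M-U$ is nowhere near sufficient for this ``global routing,'' and in \cite{Sanchez-Gabites Transactions} no such routing is performed: there, as in the paper, the complement of the product neighborhood is left entirely stationary.

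The paper's proof avoids the problem by inverting your choice of $K$: it takes $K=M-U$ (together with the fixed ends of the collar), declares \emph{every} point outside an open collar $h(Z\times(-1,1))$ to be fixed, and lets the collar orbits $h(\{z\}\times(-1,1))$ flow from one side of $K$ to the other, so that all of them are homoclinic and no flow on $M-U$ has to be designed at all. The dissonant point is then created exactly as in your last step, but \emph{inside} the collar: the single orbit through a distinguished $z_{0}$ is broken at $h(z_{0},0)$ into two orbits and a rest point, making $h(z_{0})$ externally dissonant and the two adjacent half-orbits positively and negatively dissonant. Your identification of the dissonant point via Proposition~\ref{closure}, and your verification that the local model $\dot{s}=\phi(s)$ makes $h(Z\times\{0\})$ isolated and non-saddle, are both fine; the unproved construction of the transit flow on $M-U$ is the part that would need to be either carried out (e.g.\ with a Morse function on the cobordism, plus an analysis of the extra critical points) or, better, avoided as the paper does.
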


\begin{proof}
We use the notation of Proposition~\ref{Gabites}. Construct a flow on $Z\times \mathbb{R}$ such that $Z\times (-\infty,0]$ and $Z\times [1,\infty)$ consist of fixed points and points in $Z\times (0,1)$ move from $Z\times \{0\}$ to $Z\times\{1\}$ except for a distinguished point $z_{0}$ for which the interval $
\{z_{0}\}\times (-1,1)$ is broken in three different trajectories covering $\{z_{0}\}\times (-1,0),$ $\{z_{0}\}\times \{0\}$ and $\{z_{0}\}\times (0,1)$ respectively. Now carry this flow to $U$ via $h$ and extend it to $M$ by leaving fixed all points in $M-U$. It is easy to see that $K=M-U$ \ is an isolated non-saddle set, $h(\{z_{0}\}\times (-1,0))$ and $h(\{z_{0}\}\times (0,1))$ are positively and negatively dissonant orbits and $h(z_{0})$ is an externally dissonant point.
\end{proof}

Theorem~\ref{existence} applies to manifolds with $H^{1}(M)\neq \{0\}.$ However, for manifolds with $H^{1}(M)=\{0\}$ the situation is much simpler.

\begin{theorem}\label{structure}
Let $M$ be a connected closed manifold with $H^{1}(M)=\{0\}$ and suppose that $K$ is a connected isolated non-saddle set of a flow on $M$. Then, $K$ does not have dissonant points. Moreover, if $U$ is a component of $M-K$ then the flow restricted to $U$ is either locally attracted by $K$ (i.e. all points lying in $U$ near $K$ are attracted by $K$) or locally repelled by $K$. Furthermore, if $N$ is an isolating block of $K$ of the form $N=N^+\cup N^-$ then different components of $\partial N$ lie in different components of $M-K$. As a consequence, connected isolated non-saddle sets $K$ of flows in the Euclidean space $\mathbb{R}^n$, $n\geq 2$, have no dissonant points and the flow on every component of $\mathbb{R}^n-K$ is either locally attracted
or locally repelled by $K$. In particular, if $\mathcal{I}(K)=\mathbb{R}^n$ then $K$ is a stable global attractor or a negatively stable global repeller.
\end{theorem}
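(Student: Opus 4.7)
The plan is to reduce every assertion to a Mayer--Vietoris count made possible by $H^1(M) = 0$. First I would pick, using Proposition~\ref{neighborhood} and the remark after it, a connected isolating block $N = N^+ \cup N^-$ of $K$. Because $K \subset \mathring N$, one has $N^i \cap N^o = N^+ \cap N^- \cap \partial N = K \cap \partial N = \emptyset$, so $\partial N = N^i \sqcup N^o$ and each component of $\partial N$ lies entirely in one of $N^i$ or $N^o$. The flow provides homeomorphisms $N^+ - K \cong N^i \times [0, \infty)$ and $N^- - K \cong N^o \times (-\infty, 0]$, so $\mathring N - K$ deformation retracts onto $\partial N$.

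Applying Mayer--Vietoris in \v{C}ech cohomology to the cover $M = \mathring N \cup (M - K)$, whose intersection is $\mathring N - K$, I obtain the tail $\check H^0(M) \to \check H^0(\mathring N) \oplus \check H^0(M - K) \to \check H^0(\mathring N - K) \to H^1(M) = 0$. Since $M$ and $\mathring N$ are connected and the first arrow is diagonal, a rank count identifies the number of components of $M - K$ with that of $\partial N$; the obvious surjective assignment (each component of $M - K$ must meet $\mathring N - K$ because its closure touches $K$ by connectedness of $M$) is therefore a bijection, proving the third assertion. If $U$ corresponds to $S \subset \partial N$ under this bijection, then $U \cap \mathring N$ is the matching component of $\mathring N - K$ and lies in $N^+ - K$ or $N^- - K$, yielding at once that $U$ is locally attracted or locally repelled.

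To rule out dissonant points I argue by contradiction: Proposition~\ref{three} together with compactness of $M$ reduces to a positively dissonant $x$ (the negative case is handled by flow reversal). Its backward orbit fills its component $U$ with $\mathcal R^*(K)$-points near $K$, so the dichotomy above forces $U$ locally repelled and $\partial N \cap U \subset N^o$. Proposition~\ref{closure} then produces homoclinic $x_n \to x$ in $U$. Since $x_n \notin K$, $\gamma(x_n) \not\subset N^+ \cap N^- = K$, so the orbit must exit $N$ through some point of $N^o$ and, because $\omega(x_n) \subset K$, later re-enter through a point of $N^i$; both crossings lie in $\partial N \cap U$, contradicting $N^i \cap N^o = \emptyset$.

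For the $\mathbb R^n$ consequences I compactify to $S^n$ (fixing $\infty$) and apply the above, since $H^1(S^n) = 0$ for $n \geq 2$; the local behavior near $K$ is unaffected by the extra fixed point, which is merely absorbed by the unbounded component. Under $\mathcal I(K) = \mathbb R^n$, the same exit/entrance dichotomy forbids homoclinic orbits, so every component of $\mathbb R^n - K$ is entirely in $\mathcal A^*(K)$ or $\mathcal R^*(K)$. The main obstacle is the final step: I expect to rule out bounded components $W$ using Conley's fundamental theorem on the compact invariant $\bar W$---a chain-recurrent component contained in $W$ would be a compact invariant set whose orbits have $\omega$ and $\omega^*$ inside it, contradicting $\mathcal I(K) = \mathbb R^n$; with all chain-recurrent components in $K$, every $x \in W$ would be homoclinic, again impossible. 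So $\mathbb R^n - K$ is connected, hence entirely in $\mathcal A^*$ or $\mathcal R^*$; in the attracted case $N^- = K$ makes $N$, and even $\mathring N$, positively invariant (any future crossing of $\partial N = N^i$ would violate the entrance condition), yielding the stable global attractor. The repeller case is symmetric.
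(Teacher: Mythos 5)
Your handling of the first three assertions is sound and rests on the same cohomological input as the paper, just packaged differently. Where you run Mayer--Vietoris on the open cover $\{\mathring N, M-K\}$ and use the cylinder structure $N^{+}-K\cong N^{i}\times[0,\infty)$, $N^{-}-K\cong N^{o}\times(-\infty,0]$ to put the components of $\partial N$ in bijection with those of $M-K$, the paper fixes a component $U$ of $M-K$, replaces $K$ by $\hat K=K\cup\bigl(\bigcup_{V\neq U}V\bigr)$ so that $M-\hat K=U$ is connected, and feeds the exact sequence of the pair $(M,M-\hat K)$ plus excision into Proposition~\ref{small} to conclude that $\hat K$ is an attractor or a repeller. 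The two routes are essentially equivalent; yours makes the bijection between components of $\partial N$ and of $M-K$ explicit, and your argument for the absence of dissonant points (a homoclinic orbit in a locally repelled component $U$ must re-enter $N$ through a point of $N^{i}$, yet $\partial N\cap U\subset N^{o}$ and $N^{i}\cap N^{o}=\emptyset$) is a correct expansion of what the paper dispatches in one sentence.

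The genuine gap is in the final step, which you yourself flag as the main obstacle. The dichotomy ``either some chain component of $\varphi|_{\overline{W}}$ is contained in $W$, or all chain components lie in $K$'' is not exhaustive: a chain class of the restricted flow may meet both $W$ and $\partial W\subset K$, and in that case neither of your two contradictions is available (its points can perfectly well have $\omega$-limit in $K$ and $\omega^{*}$-limit meeting $W$, so they are neither trapped in $W$ nor homoclinic). Fortunately the chain-recurrence machinery is unnecessary: the local dichotomy you have already established closes the argument. If $W$ is a bounded component which is locally attracted, then $\partial N\cap W$ is a single component of $\partial N$ contained in $N^{i}$, so $N^{o}\cap W=\emptyset$. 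Take $y\in N^{i}\cap W$. Its backward orbit leaves $N$ instantly and can never return to $N$, since a return in backward time would occur through a point of $N^{o}\cap\gamma(y)\subset N^{o}\cap W=\emptyset$. Hence $y(-\infty,0)\subset W-N$ and $\omega^{*}(y)$ is a nonempty (here boundedness of $W$ is used) compact invariant subset of $\overline{W}-\mathring N$, in particular disjoint from $K\subset\mathring N$. Any point $p\in\omega^{*}(y)$ then has $\emptyset\neq\omega(p)\cup\omega^{*}(p)\subset\omega^{*}(y)$ disjoint from $K$, so $p\notin\mathcal{I}(K)$, contradicting $\mathcal{I}(K)=\mathbb{R}^{n}$; the locally repelled case is symmetric. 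This eliminates bounded components outright, after which your closedness argument (via Proposition~\ref{topol}) puts the single remaining component entirely in $\mathcal{A}^{*}(K)$ or $\mathcal{R}^{*}(K)$, and your deduction $N^{-}=K$ (equivalently $N^{+}=K$) and the resulting stability go through as written.
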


\begin{proof}
Since $K$ is isolated then $M-K$ consists of a finite number of components. We will study the behaviour of the flow on every component of $M-K$. Suppose $U$ is one of such components. Let $\mathscr{C}$ be the set whose elements are all the components of $M-K$ different from $U$. We define 
\[
\hat{K}=K\bigcup\left(\bigcup_{V\in\mathscr{C}}V\right)
\]

Then $\hat{K}$ is a connected isolated invariant non-saddle set of the flow with $M-\hat{K}=U$. Consider an isolating block $N$ of $\hat{K}$ such that $N=N^{+}\cup N^{-}.$ We will prove that $\hat{K}$ does not disconnect $\mathring{N}$. By considering the initial part of the long exact cohomology sequence of the pair $(M,M-\hat{K})$ we obtain
\[
\ldots\leftarrow H^{1}(M)=\{0\}\leftarrow H^{1}(M,M-\hat{K})\leftarrow \widetilde{H}
^{0}(M-\hat{K})=\{0\}\leftarrow \widetilde{H}^{0}(M)
\]
and, as a consequence, $H^{1}(M,M-\hat{K})=\{0\}.$ By excising from $M$ the compact set $L=M-\mathring{N}$ we get $H^{1}(\mathring{N},\mathring{N}-\hat{K})=H^{1}(M-L,(M-L)-\hat{K})\cong H^{1}(M,M-\hat{K})=\{0\}.$ Hence, from the the
initial part of the long exact cohomology sequence of the pair $(\mathring{N},\mathring{N}-\hat{K})$ we get 
\[
\ldots\leftarrow H^{1}(\mathring{N},\mathring{N}-\hat{K})=\{0\}\leftarrow \widetilde{H}^{0}(%
\mathring{N}-\hat{K})\leftarrow \widetilde{H}^{0}(\mathring{N})=\{0\}
\]
and, as a consequence, $\widetilde{H}^{0}(\mathring{N}-\hat{K})=\{0\}$ which proves that $\hat{K}$ does not {dis\-co\-nnect} $\mathring{N}$. Hence, since $N$ can be chosen to be arbitrarily small, Proposition~\ref{small} ensures that either $\hat{K}$ is an attractor in which case all points of $N$ in $U$ are attracted to $K$ or $\hat{K}$ is a repeller, in which case all points of $N$ in $U$ are repelled by $K$. On the other hand, a similar argument proves that different components of $\partial N$ lie in different components of $M-K.$

Moreover $K$ has no homoclinic orbits and thus no dissonant points. The claim about $\mathbb{R}^{n}$ is readily obtained by
considering its Alexandrov compactification i.e. the $n$-dimensional sphere $S^{n}$ and applying to $S^{n}$ the previous part of the
theorem.
\end{proof}

A nice consequence in the case of unstable attractors in $\mathbb{R}^n$ is the following result from \cite{Moron}.

\begin{corollary}
All connected isolated unstable attractors of flows on $\mathbb{R}^n$ have external explosions.
\end{corollary}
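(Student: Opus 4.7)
The plan is to argue by contradiction. Suppose $K$ is a connected isolated unstable attractor on $\mathbb{R}^{n}$ with no external explosions. By Proposition~\ref{unstable-nonsaddle}, $K$ is non-saddle, so Theorem~\ref{structure} applies: every connected component $U$ of $\mathbb{R}^{n}-K$ is either locally attracted or locally repelled by $K$, and (as established in the proof of that theorem) $K$ has no homoclinic orbits.

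Since $K$ is an attractor, $\mathcal{A}(K)$ is an open neighborhood of $K$. Each component $U$ of $\mathbb{R}^{n}-K$ satisfies $\partial U\cap K\neq\emptyset$ (otherwise $U$ would be clopen in $\mathbb{R}^{n}$, contradicting connectedness), so $U$ meets $\mathcal{A}(K)$ arbitrarily close to $K$. The dichotomy of Theorem~\ref{structure} therefore forces every such $U$ to be locally attracted. Since only finitely many components of $\mathbb{R}^{n}-K$ meet a sufficiently small isolating block of $K$ (Proposition~\ref{topological structure}), I would choose an isolating block $N=N^{+}\cup N^{-}$ of $K$ such that $N\cap U\subset\mathcal{A}(K)$ for every such $U$, whence $N\subset\mathcal{A}(K)$.

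The standard non-saddle structure gives $N^{-}\subset\mathcal{R}(K)$, so any point of $N^{-}\setminus K$ would lie in $\mathcal{A}(K)\cap\mathcal{R}(K)=\mathcal{H}(K)$ and provide a homoclinic orbit; as $K$ has none, $N^{-}=K$, so $N=N^{+}$ is a compact positively invariant neighborhood of $K$ with $K$ as its maximal invariant subset and $N\subset\mathcal{A}(K)$.

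It remains to conclude that $K$ is Lyapunov stable, contradicting the unstable-attractor hypothesis. If stability failed we would find $x_{0}\in K$, $x_{n}\to x_{0}$ and $t_{n}>0$ with $d(x_{n}t_{n},K)\geq\epsilon$ for some $\epsilon>0$ satisfying $B_{\epsilon}(K)\subset\mathring{N}$; positive invariance of $N$ keeps $x_{n}t_{n}\in N$, so after passing to a subsequence $x_{n}t_{n}\to y\in N\setminus B_{\epsilon}(K)$ with $t_{n}\to\infty$, giving $y\in J^{+}(x_{0})$. Since $x_{0}\in K\subset\mathring{N}$ and $N$ is positively invariant, $J^{+}(x_{0})\subset N$; being closed and invariant, $\gamma(y)\subset N$ would be an invariant subset not contained in $K$, contradicting maximality. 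The main obstacle is precisely this closing stability argument: the inclusion $N\subset\mathcal{A}(K)$ by itself does not imply stability (witness the Mendelson example), and it is the absence of homoclinic orbits that first upgrades $N$ to a positively invariant neighborhood and thereby allows the maximality of $K$ to deliver stability.
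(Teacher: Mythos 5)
Your proof is correct and follows exactly the route the paper intends: the corollary is presented as an immediate consequence of Proposition~\ref{unstable-nonsaddle} (no external explosions $\Rightarrow$ non-saddle) combined with Theorem~\ref{structure}, and you have merely supplied the routine details (every component of $\mathbb{R}^n-K$ is locally attracted, the absence of homoclinic orbits forces $N^-=K$ so the block is positively invariant, and a positively invariant isolating block yields stability, contradicting instability). No gaps.
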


The next theorem gives a local sufficient condition for an isolated {in\-va\-riant} continuum to be non-saddle in terms of its Conley index. 

\begin{theorem}\label{conley}
Let $M$ be a connected and locally compact metric space and suppose that $K$ is an isolated invariant continuum of a flow $\varphi$ on $M$ such that $K$
disconnects a connected neighborhood $W$ of $K$ in $M$ into two components. If $CH_{+}^{1}(K)$ and $CH_{-}^{1}(K)$ are trivial then $K$ is non-saddle and it is neither an attractor nor a repeller. Moreover, if $M$ is a compact manifold without boundary such that $K$ does not disconnect $M$ then $H^{1}(M)\neq \{0\}.$
\end{theorem}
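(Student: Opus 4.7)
The plan is to pick a connected isolating block $N\subset W$ of $K$, use the hypotheses $CH_\pm^1(K)=0$ to force the exit and entrance sets $N^o,N^i$ to be connected (or empty), and then combine this with the disconnection of $W$ by $K$ to pin down the topology near $K$. Writing $W-K=W_1\sqcup W_2$, one chooses $N$ small enough that $N-K=(N\cap W_1)\sqcup(N\cap W_2)$ really decomposes as two nonempty pieces, i.e.\ that the disconnection of $W$ is visible on arbitrarily small neighborhoods of $K$.

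From the exact sequence
\[
\check H^0(N)\to\check H^0(N^o)\to\check H^1(N,N^o)=CH_+^1(K)=0
\]
and the connectedness of $N$, the set $N^o$ is empty or connected; the analogous argument for $(N,N^i)$ gives the same for $N^i$. Next I claim every component $C$ of $N-K$ must meet $\partial N$: otherwise, since components of the open set $N-K$ are pairwise separated, $\overline C-C\subset K$, so $\overline C\cap\partial N\subset K\cap\partial N=\emptyset$. Then no orbit through $C$ can exit $N$ (it cannot cross $\partial N$, and cannot leave $C$ without entering the invariant set $K$), forcing $C\subset N^+\cap N^-=K$ and contradicting $C\cap K=\emptyset$. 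Since each of the connected sets $N^o,N^i$ lies in a single component of $N-K$, this bounds the number of components by $2$. Combined with the two-component hypothesis this shows $N-K$ has exactly two components $N_1,N_2$, with $N^o$ and $N^i$ both nonempty and lying in opposite components, say $N^o\subset N_1$ and $N^i\subset N_2$. In particular $K$ is neither an attractor nor a repeller.

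For non-saddleness: a point $x\in N_1$ cannot leave $N$ in backward time, since backward orbits exit only through entrance-set points, which lie in $N^i\subset N_2$, and an orbit from $N_1$ can only reach $N_2$ by crossing the invariant set $K$. Hence $N_1\subset N^-$, and symmetrically $N_2\subset N^+$, so $N=K\cup N_1\cup N_2=N^+\cup N^-$ and $K$ is non-saddle.

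For the last statement, assume $M$ is a compact boundaryless manifold with $M-K$ connected. Apply Mayer--Vietoris to the open cover $M=\mathring N\cup(M-K)$, whose intersection $\mathring N-K$ has two components by what was just proved. The portion
\[
H^0(M)\to H^0(\mathring N)\oplus H^0(M-K)\to H^0(\mathring N-K)\to H^1(M)
\]
reads $\mathbb Z\to\mathbb Z\oplus\mathbb Z\to\mathbb Z^2\to H^1(M)$; the middle arrow $(a,b)\mapsto(a-b,a-b)$ has image the diagonal, so its cokernel---which injects into $H^1(M)$ by exactness---is $\mathbb Z$. Hence $H^1(M)\neq\{0\}$. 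The hardest part throughout is the first geometric step of arranging that $N-K$ really has two nonempty components; after that everything is long-exact-sequence bookkeeping driven by the vanishing of the first cohomological Conley indices.
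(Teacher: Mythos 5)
Your argument is correct, and its core coincides with the paper's: both proofs take a connected isolating block $N\subset W$, use the exact sequences of $(N,N^o)$ and $(N,N^i)$ together with the vanishing of $CH^1_{\pm}(K)$ to force $N^o$ and $N^i$ to be empty or connected, and then play this off against the disconnection of $W$ by $K$ to conclude that both sets are non-empty, connected and lie on opposite sides of $K$, whence $N=N^+\cup N^-$. The organization of the combinatorial step differs slightly: you prove the auxiliary lemma that every component of $N-K$ must meet $\partial N$ (via the ``orbit trapped in a component lies in the maximal invariant set'' argument), which bounds the number of components of $N-K$ by two and simultaneously rules out $N^o=\emptyset$ or $N^i=\emptyset$; the paper instead observes directly that $\partial N$ must meet each of the two components $C_1,C_2$ of $W-K$, so an empty exit set would force $N^i$ to have at least two components, contradicting its connectedness. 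These are interchangeable. The genuine divergence is in the final claim: the paper deduces $H^1(M)\neq\{0\}$ by invoking its Theorem~\ref{structure} (if $H^1(M)=\{0\}$ and $K$ does not disconnect $M$, then $K$ locally attracts or repels its whole neighborhood, contradicting ``neither attractor nor repeller''), whereas you give a self-contained Mayer--Vietoris computation for the cover $M=\mathring{N}\cup(M-K)$. Your route is more elementary and avoids the machinery of Theorem~\ref{structure}; two small points to tidy up are that you only need $\mathring{N}-K$ to be \emph{disconnected} (it may a priori have more than two components, but the cokernel argument still gives $H^1(M)\neq\{0\}$), and that you should take $N$ to be a connected manifold with boundary so that $\mathring{N}$ is connected and the source of the restriction map really has rank $2$.
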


\begin{proof}
Let $W$ be a connected neighborhood of $K$ in $M$ such that $W-K$ consists of two different connected components, $C_{1}$ and $C_{2}$. Consider a
connected isolating block $N$ of $K$ contained in $W$ with entrance and exist sets $N^{i}$ and $N^{o}$ respectively. We remark that if $N^{o}$ is
empty then $N^{i}$ is necessarily non-empty and consisting of at least two components (one at least in each $C_{i})$. Suppose that $N^{o}$ is non-empty.
Since $CH_{+}^{1}(K)=\{0\}$ we have that $\check{H}^{1}(N,N^{o})$ is trivial and from the cohomology exact sequence of the pair $(N,N^{o})$
\[
\ldots\leftarrow \check{H}^{1}(N,N^{o})=\{0\}\leftarrow \widetilde{\check{H}^{0}}(N^{o})\leftarrow \widetilde{\check{H}^{0}}(N)=\{0\}
\]
we get that $\widetilde{\check{H}^{0}}(N^{o})=\{0\}$ and, as a consequence $N^{o}$ is connected. A similar argument using the fact that $CH_{-}^{1}(K)$ is trivial would establish that if $N^{i}$ is non-empty then it is also connected. From this, we get that if both $N^{i}$ and $N^{o}$ are non-empty then the boundary 
$\partial N$ consists of exactly two connected components $N^{o}$ and $N^{i}$ with one of them $N^{o}$ contained in, say, $C_{1}$, and the other, $N^{i}$,
contained in $C_{2}$. The previous argument can be used to exclude the fact that one of the sets $N^{i}$ or $N^{o}$ is empty since, in that case, the
other would be non-empty and consisting of more than one component and we would have a contradiction. Then $K$ can be neither an attractor
nor a repeller and it readily follows that $N$ is an isolating block for $K$ with the structure $N=N^{+}\cup N^{-}$ and, thus, that $K$ is non-saddle. On the other and, if $M$ is a manifold not disconnected by $K$ and $%
H^{1}(M)=\{0\}$ then, by Theorem~\ref{structure}, $K$ attracts or repells all the points of one of its neighborhoods in $M$ and this is a contradiction with the fact just proved that $K$ is neither an attractor nor a repeller.
\end{proof}

The next example is a modification of \cite[Example~2]{Moron} which shows how to produce plenty of examples of flows having an isolated invariant continuum in the conditions of Theorem~\ref{conley}. 

\begin{example}
Let $K$ be a compact and connected manifold (without boundary) endowed with a flow $\varphi_1$ and consider the unit interval $[0,1]$ together with a dynamical
system $\varphi_2$ which has $0$ and $1$ as fixed points and otherwise moves points away from $0$ and towards $1$. The product flow $\varphi(x, s, t) := (\varphi_1 (x, t), \varphi_2 (s,t))$ in the phase space $K\times [0, 1]$ restricts to $\varphi_1$ on $K \times\{0\}$ and $K\times\{1\}$, hence these can be identified to get a flow on the quotient space $K \times S^1$ (with the obvious identifications). Observe that given a closed product neighborhood $W$ of $K$, i.e. a neighborhood mapped onto $K\times [-1,1]$ by a homeomorphism $h:W\to K\times [-1,1]$ such that $h(K)=K\times\{0\}$, $W$ is disconnected by $K$ into two components. Moreover,  $W$ can be chosen  to be an isolating block with entrance and exit set corresponding to $K\times\{-1\}$ and $K\times\{1\}$ respectively. Then, $CH_+^1(K)=CH_{-}^1(K)=\{0\}$ and Theorem~\ref{conley} ensures that $K$ is non-saddle.  A flow $\widehat{\varphi}$ in the conditions of Theorem~\ref{conley} having dissonant points can be obtained modifying $\varphi$. Indeed, let $S$ be a proper closed subset of the exit set of $W$. By using a theorem of Beck \cite{beck} $\varphi$ can be modified to a new flow in such a way that all the orbits of $\varphi$ not meeting a point of $S$ are preserved while the orbits containing a point of $S$ are decomposed in two orbits together with that point of $S$. After this modification $W$ is no longer an isolating block but it is possible to build isolating blocks of $K$ as before in the interior of $W$. It is straightforward to see that $S$ contains externally dissonant points.
\end{example}

We will study now the general structure of a flow on a compact ANR {ha\-ving} an isolated non-saddle set. The next result gives an overall picture of the situation.

\begin{theorem}
Let $K$ be a connected isolated non-saddle set of a flow on a  compact and connected ANR, $M$, (in particular on a compact and connected manifold) and let $L$ be the complement $M-\mathcal{I}(K)$ of its region of influence. Then $L$ is an isolated invariant compactum which is non-empty when $K$ is not an unstable global attractor.  The saddle components of $L$ are exactly those containing externally dissonant points of $K.$ The union of these components is an isolated invariant (saddle) compactum $L_{s}$ and $L-L_{s}$ is an isolated invariant non-saddle compactum that we denote by $L_{n}.$ Moreover, if $x$ is a non-homoclinic point in $\mathcal{I}(K)-K$ and the component of $\mathcal{I}(K)-K$ containing $x$ contains also homoclinic points then either $\omega (x)\subset L_{s}$ or $\omega^* (x)\subset L_{s}.$
\end{theorem}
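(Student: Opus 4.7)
The plan is to address the four assertions in order, with the bulk of the work concentrated in the saddle/non-saddle decomposition of $L$ and in the final orbit-limit containment.

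For the first assertion, since $\mathcal{I}(K)$ is open, $L=M\setminus\mathcal{I}(K)$ is closed in the compact $M$, hence compact, and it is flow-invariant because $\mathcal{A}(K)$ and $\mathcal{R}(K)$ are. Picking, via Proposition~\ref{neighborhood}, an isolating block $N=N^+\cup N^-$ of $K$ inside $\mathcal{I}(K)$, $M\setminus\mathring{N}$ is an isolating neighborhood of $L$: any full trajectory staying off $\mathring{N}$ has neither $\omega$- nor $\omega^*$-limit in $K$, so lies in $L$. Non-emptiness when $K$ is not an unstable global attractor is handled by a short analysis: if $L=\emptyset$ then $\mathcal{I}(K)=M$, and the closedness of $\mathcal{A}^*(K)\cup K$ and $\mathcal{R}^*(K)\cup K$ in $\mathcal{I}(K)$ (Proposition~\ref{topol}) together with connectedness of $M$ places $K$ in the global attractor case.

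For the characterization of saddle components, one direction is direct: if $x\in L_0$ is externally dissonant, Proposition~\ref{closure} supplies homoclinic points $x_n\to x$ whose forward and backward orbits both enter every neighborhood of $K$; taking a small isolating neighborhood $U$ of $L_0$ disjoint from $K$, both $\gamma^+(x_n)$ and $\gamma^-(x_n)$ leave $U$, so $L_0$ is saddle. For the converse, assume $L_0$ has no externally dissonant point; Proposition~\ref{closure} then gives $L_0\cap\overline{\mathcal{H}(K)}=\emptyset$, and one can choose an open neighborhood $W$ of $L_0$ with $W\cap\overline{\mathcal{H}(K)}=\emptyset$ and $\overline{W}\cap L=L_0$, so that $W\setminus L_0\subset\mathcal{A}^*(K)\sqcup\mathcal{R}^*(K)$ is a clopen partition by Proposition~\ref{topol}. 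A prolongational argument by contradiction — if $L_0$ were saddle, sequences $p_n\to p\in L_0$ with $\gamma^\pm(p_n)$ leaving $W$, restricted to a subsequence in, say, $W\cap\mathcal{A}^*(K)$, would, once their exit points on $\partial W$ are tracked (where $\partial W\cap L=\emptyset$ forces the limits into $\mathcal{I}(K)$ and thence, by the attracted/repelled behavior of those limits, to $K$), yield a pair $(y,z)\in J^*(p)\cap(K\times K)$, contradicting non-external-dissonance of $L_0$ — delivers non-saddleness.

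For the third assertion, the externally dissonant points form the closed subset $E=\overline{\mathcal{H}(K)}\cap L$ of $L$. Since components of a compact metric space admit clopen neighborhood bases, the union $L_s$ of components of $L$ meeting $E$ is closed in $L$, hence compact and invariant, and saddle by the previous step; the complement $L_n=L\setminus L_s$ is likewise compact, invariant and, componentwise, non-saddle, and isolation of each follows from clopen separation combined with isolation of $L$. For the orbit-limit claim, take $x\in C\cap(\mathcal{I}(K)-K)$ non-homoclinic with $C$ containing a homoclinic orbit; without loss of generality $x\in\mathcal{A}^*(K)$, so $\omega(x)\subset K$ is disjoint from $L_s$, and one must show $\omega^*(x)\subset L_s$. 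The backward orbit lies in the invariant set $A_C:=C\cap\mathcal{A}^*(K)$, so $\omega^*(x)$ is a nonempty, connected, compact invariant subset of $\overline{A_C}\subset\mathcal{A}^*(K)\cup K\cup L$. The crucial step is to rule out $\omega^*(x)\cap\mathcal{I}(K)\neq\emptyset$; granting this, a repeat of the prolongational analysis of the previous paragraph applied to the component of $L$ meeting $\omega^*(x)$ forces that component to carry externally dissonant points, hence to lie in $L_s$. The main obstacle is this very last step: excluding $\omega^*(x)\cap\mathcal{I}(K)\neq\emptyset$ requires a genuinely global use of the hypothesis that $C$ contains homoclinic orbits, since otherwise invariance alone — given $y\in\omega^*(x)\cap\mathcal{A}^*(K)$ one has $\omega(y)\subset\omega^*(x)\cap K$ non-empty — together with the connectedness of $\omega^*(x)$ permits $\omega^*(x)$ to straddle $K$ and its complement; the argument must trace back from such a putative straddling to the homoclinic structure of $C$, producing either $x\in\mathcal{H}(K)$ or externally dissonant points incompatible with the non-saddleness of the approached components.
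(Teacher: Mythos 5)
Your treatment of the first assertion and of the compactness and isolation of $L_s$ and $L_n$ is sound (and your route through the upper semicontinuity of the component decomposition of the compact set $L$, together with the closedness of the set $\overline{\mathcal{H}(K)}\cap L$ of externally dissonant points, is arguably cleaner than the paper's, which counts components via a clopen analysis on $\partial N$). But there are two genuine gaps. First, in the converse of the saddle characterization, the mechanism you describe does not produce a pair in $J^{*}(p)\cap(K\times K)$: if the saddleness witnesses $p_n$ lie in $\mathcal{A}^{*}(K)$, the limit $r$ of their \emph{backward} exit points through $\partial W$ lies again in $\mathcal{A}^{*}(K)$ by Proposition~\ref{topol}, so its backward orbit does not approach $K$ and no second coordinate $z\in K$ is obtained. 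The contradiction must come instead from the clash between $r\in\mathcal{A}^{*}(K)$ and the fact that $\gamma^{+}(r)$, being a locally uniform limit of the arcs $p_n[s_n,0]\subset\overline{W}$, stays in $\overline{W}$, which forces $\omega(r)\subset L_0$ and hence $r\notin\mathcal{A}(K)$. This is essentially the paper's argument (exit points plus closedness of $\mathcal{A}^{*}(K)\cup K$); your version as stated would fail.

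Second, and more seriously, the final assertion is not proved and you have mislocated its difficulty. Excluding $\omega^{*}(x)\cap\mathcal{I}(K)\neq\emptyset$ does \emph{not} require the homoclinic hypothesis on $C$: one first gets $\omega^{*}(x)\cap K=\emptyset$ directly from non-saddleness of $K$ (a sequence $xt_n\to y'\in K$ with $t_n\to-\infty$ would eventually lie in $N^{+}\cup N^{-}$, and either alternative contradicts $x\in\mathcal{A}^{*}(K)$), and then Proposition~\ref{topol} and invariance exclude $\omega^{*}(x)$ from $\mathcal{A}^{*}(K)$, from $\mathcal{R}^{*}(K)$ and from $\mathcal{H}(K)-K$. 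The real content is that $\omega^{*}(x)$ lands in $L_{s}$ rather than $L_{n}$, and your plan to ``repeat the prolongational analysis'' on the component $L_{0}\supset\omega^{*}(x)$ cannot work: the backward orbit of $x$ eventually stays inside any isolating neighborhood of $L_{0}$, so $x$ only supplies points near $L_{0}$ whose forward orbits leave but whose backward orbits do not, and this never witnesses saddleness of $L_{0}$. One must genuinely link $x$ to the homoclinic points of $C$. The paper does this by showing that, for each component $C'$ of $L_{n}$, the set of points of $\partial N$ attracted or repelled by $C'$ is clopen in $\partial N$, hence a union of full components of $\partial N$; since the component of $\partial N$ met by $\gamma(x)$ also contains homoclinic points, which are attracted and repelled by $K$ and hence by no $C'$, the point $x$ cannot be repelled by any component of $L_{n}$, and the connectedness of $\omega^{*}(x)\subset L_{s}\sqcup L_{n}$ finishes. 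This clopen lemma, which also yields the finiteness of the set of components of $L_{n}$ and the non-saddleness of $L_{n}$ as a whole rather than merely componentwise, is absent from your proposal.
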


\begin{proof}
Obviously, $L$ is compact and invariant being the complement in $M$ of the open invariant set $\mathcal{I}(K)$ and it is non-empty when $K$ is not an unstable global attractor. Suppose that $L$ is non-empty since otherwise there is nothing to prove.  Then, if $U$ is a closed neighborhood of $L$ with $U\cap K=\emptyset $ then the trajectory of every point in $U-L$ is contained in $\mathcal{I}(K)$ and hence either its $\omega$-limit or its $\omega^*$-limit is in $K,$ which implies that this trajectory is not entirely contained in $U$. Hence $L$ is isolated.

Suppose that $C$ is a component of $L$. We see that $C$ contains externally dissonant points if and only if $C$ is saddle.  Suppose that $C$ is saddle. Then there exists a neighborhood $U$ of $C$ disjoint from $K$  and a sequence of points  $x_n\in U$, $x_n\to C$ such that $\gamma^+(x_n)\nsubseteq U$ and $\gamma^-(x_n)\nsubseteq U$. Choose an isolating block $N$ of $L$ such that the component $N_C$ of $N$ containing $C$ is contained in $U$. Since for each $n$, $x_n$ belongs to $\mathcal{I}(K)$ we may assume without loss of generality that either $\omega(x_n)\subset K$ for each $n$ or $\omega^*(x_n)\subset K$ for each $n$. We consider the first situation. The trajectory of $x_n$ abandons $N_C$ in positive and negative time since $N_C\subset U$. Let $y_n$ be the sequence corresponding to the exit points of $\gamma^+(x_n)$. We may assume that $y_n\to y\in \partial N_C$ and a simple argument shows that $\omega^*(y)\subset L$. As a consequence $\omega^*(x_n)\subset K$ for almost all $n$ since if not, there is a contradiction with the fact that $\mathcal{A}^*(K)$ is closed in $\mathcal{I}(K)-K$. This proves that $C$ contains externally dissonant points. The converse statement is straightforward and it is left to the reader.

We see that $L_{s}$ is a compactum. Otherwise there are points $x_{n}$ in $L_{s}$ converging to a point $x\in L_{n}$ and hence $x$ belongs to a non-saddle component $C$ of $M-\mathcal{I}(K).$ Consider an arbitrary neighborhood $U$ of $C$ not meeting $K$. Then there is an arbitrarily small neighborhood $U^{\prime }\subset U$ of $C$ such that if a component of $L$ meets $U^{\prime }$ then it is entirely contained in $U^{\prime }.$ Hence $U^{\prime }$ contains the components of some $x_{n}$ and, as a consequence, dissonant and, thus, homoclinic points whose orbit leaves $U$ in the past and in the future. Hence $C$ cannot be non-saddle.

We see now that every component $C$ of $L_{n}$ is isolated. Consider a closed neighborhood $U$ of $L$ such that the component $U_{0}$ containing $C$ does not meet $L_{s}\cup K.$ Then $U_{0}\cap L$ is a compactum and $U_0$ can be chosen in such a way that $U_\cap L$ consists entirely of non-saddle components. Hence $U_{0}\cap L$ is non-saddle.
Moreover $U_{0}\cap L$ is isolated, hence it consists of a finite number of components all isolated non-saddle. On the other hand, we will prove that $L_{n}$ itself consists of a finite number of components. Consider an isolating block $N$ of $K$ in $M$. We know that its boundary, $\partial N,$ consists of a finite number of components. Let $C$ be a component of $L_{n}$ and $S$ the set of points of $\partial N$ which are attracted or repelled by $C.$ We
will see that $S$ is open-closed in $\partial N$ and, hence, $S$ attracts or repells complete components of $\partial N.$ We prove that $S$ is a closed subset of $\partial N$. Suppose $x_{n}\in S$ are attracted by $C$ and $x_{n}\rightarrow x$. All the points $x_{n}$ and $x$ itself belong to $\mathcal{R}^*(K)$. If $x$ is not attracted by $C$ then $\omega (x)$ is contained in a component $C^{\prime }\subset L_{s}$. Consider an isolating block $N$ of $L$ such that $C$ and $C^{\prime }$ lie in different components $N_{C}$ and $N_{C^{\prime }}$ of $N.$ We have points $x_{n}s_{n}\in N_{C^{\prime }}^{o}$ with $s_{n}\rightarrow \infty $. The points $x_{n}s_{n}$ converge to $y\in N_{C^{\prime }}^{o}$ with $ys\in N_{C^{\prime }}$ for every negative $s$. Hence $\omega ^{\ast }(y)\subset N_{C^{\prime }}$ and $y\notin \mathcal{R}(K).$ Hence $\mathcal{R}^{\ast }(K)$ is not closed in $\mathcal{I}(K)-K$. This contradiction proves that $S$ is open-closed in $\partial N$ and hence $C$ attracts or repells complete components. Since every $C$ must attract or repell some component we deduce that there is a finite number of non-saddle components and, thus, $L_n$ is isolated non-saddle.

 If the component of $\mathcal{I}(K)-K$ containing $x$ contains homoclinic points then the component of $\partial N$ containing its orbit contains homoclinic points and by the previous discussion it can be neither attracted nor reppelled by $L_n$. Then, either $\omega(x)\subset L_s$ or $\omega^*(x)\subset L_s$.
\end{proof}

\begin{corollary}
$M-\mathcal{I}(K)$ is non-saddle if and only if the flow does not have dissonant points.
\end{corollary}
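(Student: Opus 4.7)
The plan is to leverage the preceding theorem, which has already done the heavy lifting by decomposing $L := M - \mathcal{I}(K)$ as $L = L_s \cup L_n$, with $L_n$ isolated non-saddle and the components of $L_s$ being precisely those containing externally dissonant points of $K$. Combined with Proposition~\ref{three}, which in the compact ambient setting renders the existence of externally dissonant points equivalent to the existence of any kind of dissonant point, the corollary reduces to the claim that $L$ is non-saddle if and only if $L_s = \emptyset$.

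The easy direction is immediate: if the flow has no dissonant points, then in particular it has no externally dissonant points, so $L_s = \emptyset$ and $L = L_n$, which the theorem guarantees is non-saddle.

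For the reverse implication I would argue by contrapositive. Assume an externally dissonant point $x$ exists; then $x \in L$, since externally dissonant points lie outside $\mathcal{I}(K)$ by Definition~\ref{dpoint}. Using that $L$ and $K$ are disjoint compacta in the metric space $M$, invoke normality to choose disjoint open neighborhoods $U \supseteq L$ and $W \supseteq K$. I claim $U$ witnesses that $L$ is saddle. Indeed, let $V$ be any neighborhood of $L$. By Proposition~\ref{closure} the point $x$ lies in the boundary of $\mathcal{H}(K)$, so there is a sequence $x_n \to x$ with $x_n \in \mathcal{H}(K) - K$; for $n$ large, $x_n \in V$. Since $\omega(x_n) \cup \omega^*(x_n) \subseteq K \subseteq W$, the semitrajectories $\gamma^{\pm}(x_n)$ eventually enter $W$, hence contain points outside $U$ (as $U \cap W = \emptyset$). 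Thus $\gamma^{\pm}(x_n) \not\subseteq U$, exhibiting $L$ as saddle.

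The main obstacle, though modest, lies in the second direction: one must separate $L$ and $K$ by disjoint open neighborhoods $U$ and $W$ so that a single fixed $U$ can serve as the saddle witness against every choice of $V$. Once this normality-based separation is in place, the rest is essentially a repackaging of the definitions together with the closure description of $\mathcal{H}(K)$ provided by Proposition~\ref{closure}.
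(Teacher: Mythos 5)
Your proposal is correct and follows the route the paper intends: the corollary is stated without proof as an immediate consequence of the preceding theorem's decomposition $L=L_s\cup L_n$ together with Proposition~\ref{three}, which is exactly the reduction you make. Your explicit verification of the reverse direction --- separating $L$ from $K$ and using the homoclinic points accumulating on an externally dissonant point (via Proposition~\ref{closure}) as saddle witnesses --- correctly fills in the detail the paper leaves to the reader, and it neatly sidesteps the question of whether a compactum with a saddle component must itself be saddle.
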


We say that an orbit $\gamma\subset\mathcal{I}(K)-K$ is \textit{deviant} if either $\omega (\gamma)$ or $\omega^{\ast }(\gamma )$ contains an externally dissonant point.

All the externally dissonant points are contained in $L_{s}$. In the
important particular case in which all the points of $L_{s}$ are dissonant we have

\begin{corollary}
If $L_{s}$ consists entirely of dissonant points then $\hat{K}=$ $\mathcal{H}(K)\cup \mathcal{I(}L_{s})$ is the smallest simple non-saddle set containing  $K$ i.e. $\hat{K}$ is obtained from $K$ by adding all the homoclinic orbits plus all the deviant orbits.
\end{corollary}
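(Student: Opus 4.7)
My plan is to check five properties of $\hat K$ in turn: that it equals $K$ together with all homoclinic and deviant orbits, that it is compact and invariant, that it is non-saddle, that it is simple, and that it lies inside every simple non-saddle set containing $K$.

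For the set-theoretic identification, $\mathcal{H}(K) = K \cup \{\text{homoclinic orbits}\}$ is just the definition, so the real content is showing $\mathcal I(L_s)\setminus L_s$ is exactly the set of deviant orbits. Under the hypothesis every externally dissonant point lies in $L_s$, so an orbit $\gamma\subset\mathcal I(K)\setminus K$ is deviant iff $\omega(\gamma)$ or $\omega^{*}(\gamma)$ meets $L_s$. For components of $\mathcal I(K)\setminus K$ containing homoclinic points, the preceding theorem upgrades ``meets $L_s$'' to ``is contained in $L_s$''; for components without homoclinic points, Proposition~\ref{topological structure} says the flow there is parallelizable, and parallelizability together with the hypothesis forces the corresponding asymptotic limits into $L_s$. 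Combining both cases yields $\mathcal I(L_s) = L_s \cup \{\text{deviant orbits}\}$.

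Compactness is the next and most delicate step. Invariance is immediate since both summands are unions of full orbits. For closedness I would first invoke Proposition~\ref{closure} to get $\overline{\mathcal H(K)} = \mathcal H(K)\cup\mathcal D$, and then show that under the hypothesis every point of $\mathcal D$ sits inside $\mathcal I(L_s)$: external dissonants belong to $L_s$ by assumption, while positively (respectively negatively) dissonant points have their $\omega$-limits (respectively $\omega^{*}$-limits) made up of externally dissonant points by the proof of Proposition~\ref{three}, hence contained in $L_s$. This gives $\overline{\mathcal H(K)}\subset\hat K$. For closedness of $\mathcal I(L_s)$ itself, if $x_n\in\mathcal A(L_s)$ and $x_n\to x$, I would pick entry times $t_n$ with $x_n t_n$ in an isolating block $N_s=N_s^{+}\cup N_s^{-}$ of $L_s$, pass to $x_n t_n\to y\in N_s^{+}$, and split into two cases: if $t_n$ is bounded then $y=xt_0$ forces $x\in\mathcal A(L_s)$; if $t_n\to\infty$ then $y\in J^{+}(x)$ and a neighborhood chase modelled on Proposition~\ref{three} forces $x$ to be in $L_s$ or dissonant, hence in $\hat K$ by the previous observation. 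This last case is the main technical obstacle, and it is precisely where the hypothesis that $L_s$ consists entirely of dissonant points is essential.

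The remaining properties follow. For non-saddleness I would construct an isolating block of the form $N^{+}\cup N^{-}$ for $\hat K$ by patching together isolating blocks for $K$ and for $L_s$ (disjoint since $K\cap L_s=\emptyset$) with flow-box neighborhoods of the connecting homoclinic and deviant orbits; the hypothesis rules out nearby points that escape $\hat K$ in both time directions. Simplicity follows by splitting on the location of $\omega(x)$ and $\omega^{*}(x)$ inside the compact invariant set $K\cup L_s$, each of the four subcases giving a point already in $\hat K$ (homoclinic, deviant either way, or homoclinic-to-$L_s$ inside $\mathcal I(L_s)$). Finally, for minimality, any simple non-saddle $\widetilde K\supset K$ is compact by definition of non-saddle, so every homoclinic orbit lies in $\mathcal A(\widetilde K)\cap\mathcal R(\widetilde K)=\widetilde K$; every externally dissonant point, being a limit of homoclinics by Proposition~\ref{closure}, also lies in $\widetilde K$; and every deviant orbit, having both asymptotic limits in $K\cup L_s\subset\widetilde K$, lies in $\widetilde K$ as well, giving $\hat K\subset\widetilde K$.
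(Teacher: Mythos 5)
Your skeleton (identify $\hat K$ set-theoretically, then check compactness, non-saddleness, simplicity, minimality) is the natural one, and your minimality and simplicity arguments are essentially sound; the paper itself offers no proof of this corollary, deriving it from the structure theorem. But two of your steps fail as written. First, you invoke ``an isolating block $N_s=N_s^{+}\cup N_s^{-}$ of $L_s$''. No such block exists: $L_s$ is by construction a \emph{saddle} compactum, and a single isolating neighborhood of the form $N^{+}\cup N^{-}$ already forces non-saddleness (a point $z=\lim x_nt_n$ with $x_n\in N^{+}$, $t_n\to\infty$ lands in $N^{+}\cap N^{-}$, i.e.\ in the maximal invariant set). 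So your closedness argument for $\mathcal{I}(L_s)$ rests on a false premise. More seriously, your case analysis --- both there and in the identification of $\mathcal{I}(L_s)\setminus L_s$ with the deviant orbits --- never confronts $L_n$. A limit $x$ of points $x_n\in\mathcal{A}(L_s)\setminus L_s$ could a priori lie in $L_n$ or satisfy $\omega(x)\subset L_n$, and neither alternative is ``in $L_s$ or dissonant''. What excludes it is not a neighborhood chase around $L_s$ but the non-saddleness of $L_n$: if $P=P^{+}\cup P^{-}$ is an isolating block of $L_n$ and $xt_0\in\mathring{P}$, then $x_nt_0\in P^{+}\cup P^{-}$ for large $n$, forcing $\omega(x_n)\subset L_n$ or $\omega^{*}(x_n)\subset L_n$, against $\omega(x_n)\subset L_s$ and $\omega^{*}(x_n)\subset K$. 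The same device --- limit sets of non-homoclinic orbits of $\mathcal{I}(K)-K$ are connected, avoid $\mathcal{I}(K)$, hence lie wholly in $L_s$ or wholly in $L_n$ --- is what upgrades ``$\omega(\gamma)$ meets $L_s$'' to ``$\omega(\gamma)\subset L_s$'' for \emph{every} deviant orbit; your appeal to parallelizability on components without homoclinic points does not decide between $L_s$ and $L_n$.

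Second, non-saddleness of $\hat K$ cannot be obtained by ``patching flow-box neighborhoods of the connecting homoclinic and deviant orbits'': in Example~\ref{example1} the set $\mathcal{H}(K)-K$ is an open set containing all but three orbits, so there is no locally finite family of flow boxes to patch. The workable route is sequential: assume $x_n\to x\in\hat K$ with both semi-orbits of $x_n$ escaping a fixed neighborhood $U\supset\hat K$, and argue according to the position of $x$ (in $K$, in $\mathcal{H}(K)-K$, on a deviant orbit, or in $L_s$), using the block $N=N^{+}\cup N^{-}$ of $K$, strong attraction and repulsion on $\mathcal{H}(K)-K$, and --- in the delicate case $x\in L_s$ --- the observation that under the hypothesis every point of $M-\hat K$ is heteroclinic between $K$ and $L_n$, because an orbit of $\mathcal{I}(K)-K$ whose limit set merely meets $L_s$ is already deviant, hence already in $\hat K$. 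This is exactly where the hypothesis that $L_s$ consists entirely of dissonant points does its work, and it is the hardest point of the corollary; your proposal leaves it essentially unaddressed.
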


\section{Non-saddle sets in 2-dimensional flows}

In this section we will study some dynamical properties of non-saddle sets of flows defined on $2$-dimensional manifolds. In particular, the following result shows that in the case of closed orientable surfaces, the existence of dissonant points can be detected by looking at some relations between the topologies of $K$ and its region of influence.  Since in this section we are dealing with either $2$-dimensional manifolds or the Euclidean plane $\mathbb{R}^2$, Gutierrez' Theorem allows us to assume that all the flows considered are $C^1$ and $C^\infty$ respectively.

\begin{theorem}
An isolated non-saddle continuum $K$ of a flow on a closed {orien\-ta\-ble} surface $M$ does not have dissonant points if and only if the homology and cohomology of $\mathcal{I}(K)$ are finitely generated and $\chi(K)=\chi(\mathcal{I}(K))$.
\end{theorem}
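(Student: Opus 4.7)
The plan is to compute $\chi(\mathcal{I}(K))-\chi(K)$ as a sum of contributions from the components of $\mathcal{I}(K)-K$ and to use the 2D classification of open orientable surfaces to decide when each contribution vanishes.

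\textbf{Setup via excision.} Pick an isolating block $N=N^{+}\cup N^{-}$ of $K$ inside $\mathcal{I}(K)$ (Proposition~\ref{neighborhood}); by Guti\'errez' Theorem I may assume $\varphi$ is $C^{1}$, so $\partial N$ is a disjoint union of finitely many circles. The inclusion $K\hookrightarrow N$ is a shape equivalence, so a five-lemma argument on the long exact Čech-cohomology sequences of $(\mathcal{I}(K),K)$ and $(\mathcal{I}(K),N)$ gives $\check{H}^{*}(\mathcal{I}(K),K)\cong\check{H}^{*}(\mathcal{I}(K),N)$; strong excision of $K$ (closed in $\mathring{N}$) then yields $\check{H}^{*}(\mathcal{I}(K)-K,\,N-K)$. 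The flow deformation-retracts $N^{+}-K$ onto $N^{i}$ (backwards) and $N^{-}-K$ onto $N^{o}$ (forwards), so $N-K$ retracts onto $\partial N$ and $\chi(N-K)=0$.

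\textbf{Forward direction.} If $K$ has no dissonant points, Proposition~\ref{topological structure} retracts each component $C_{i}$ of $\mathcal{I}(K)-K$ onto a component of $\partial N\cap C_{i}$. Combined with $N-K\simeq\partial N$, the inclusion $N-K\hookrightarrow\mathcal{I}(K)-K$ becomes a homotopy equivalence, so the relative Čech cohomology vanishes and the long exact sequence of $(\mathcal{I}(K),K)$ yields $\check{H}^{*}(\mathcal{I}(K))\cong\check{H}^{*}(K)$. Both finite generation and $\chi(K)=\chi(\mathcal{I}(K))$ follow.

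\textbf{Reverse direction (contrapositive).} Suppose some component $C$ of $\mathcal{I}(K)-K$ contains a dissonant point, and assume $\check{H}^{*}(\mathcal{I}(K))$ is finitely generated (otherwise the right-hand side already fails). Mayer--Vietoris on the open cover $\{\mathring{N},\mathcal{I}(K)-K\}$ of $\mathcal{I}(K)$, using $\chi(\mathring{N})=\chi(K)$ and $\chi(\mathring{N}-K)=0$ (half-open cylinders in 2D), produces
\[
\chi(\mathcal{I}(K))-\chi(K)=\sum_{i}\chi(C_{i}).
\]
Each $C_{i}$ is a connected open orientable surface containing no fixed points and no periodic orbits (such invariant sets cannot lie in $\mathcal{I}(K)-K$), and for $C_{i}$ without dissonant points $\chi(C_{i})=0$ by Proposition~\ref{topological structure}. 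For a dissonant $C_{i}$ I claim $\chi(C_{i})\le -1$, which suffices to conclude $\chi(\mathcal{I}(K))\ne\chi(K)$. Since $C_{i}$ is of finite type, it is enough to exclude its being a cylinder or a disk. A cylinder is excluded by the converse of Proposition~\ref{topological structure}, since a parallelizable flow carries no dissonant point. A disk is excluded by noting that the topological boundary $\partial C_{i}$ is a single connected circle in $M$, and the disjointness $K\cap L=\emptyset$ (with $L=M-\mathcal{I}(K)$) forces $\partial C_{i}\subset K$ or $\partial C_{i}\subset L$; the second option is impossible, since orbits in $C_{i}$ would escape $\mathcal{I}(K)$, while the first forces every orbit in $C_{i}$ to be homoclinic by Poincar\'e--Bendixson, placing $C_{i}$ inside the open set $\mathcal{H}(K)-K$ (Proposition~\ref{topol}) and hence in its interior, where no dissonant point can lie.

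\textbf{Main obstacle.} The delicate step is the disk-exclusion in the reverse direction: it chains the connectedness of $\partial C_{i}$, the disjointness $K\cap L=\emptyset$, and the openness of $\mathcal{H}(K)-K$ from Proposition~\ref{topol} together with a Poincar\'e--Bendixson analysis inside $\overline{C_{i}}$. The cylinder case, by contrast, is a direct application of the converse of Proposition~\ref{topological structure}, and the finite-generation clause of the theorem is what lets us invoke the classification of open orientable surfaces and the additivity of $\chi$ component-wise.
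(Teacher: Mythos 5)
Your overall strategy coincides with the paper's: write $\chi(\mathcal{I}(K))-\chi(K)$ as $\sum_i\chi(C_i)$ over the components of $\mathcal{I}(K)-K$, show that dissonance-free components contribute $0$ and that components with dissonant points would contribute a strictly negative amount. However, two steps as written are genuinely wrong. In the forward direction, the claim that $N-K\hookrightarrow\mathcal{I}(K)-K$ is a homotopy equivalence, and the resulting isomorphism $\check{H}^{*}(\mathcal{I}(K))\cong\check{H}^{*}(K)$, are false. A component $C$ consisting of homoclinic orbits meets both $N^{o}$ and $N^{i}$, so $(N-K)\cap C$ has the homotopy type of at least two circles while Proposition~\ref{topological structure} retracts $C$ onto a single circle; already $H_0$ fails to match. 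Concretely, for a circle $K$ in the torus all of whose exterior orbits are homoclinic (no dissonant points), $\mathcal{I}(K)=T$ and $\check{H}^{*}(T)\ncong\check{H}^{*}(S^{1})$, even though $\chi(T)=\chi(S^1)=0$. The Euler-characteristic conclusion survives (a disjoint union of circles has $\chi=0$ however many there are), but finite generation of $\check H^*(\mathcal{I}(K))$ needs a different argument; the paper gets it from the fact that, absent dissonant points, $\mathcal{H}(K)$ is a compact simple non-saddle set with $Sh(\mathcal{I}(K))=Sh(\mathcal{H}(K))$.

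In the reverse direction, the exclusion of $\chi(C_i)=0$ is a non sequitur: the converse of Proposition~\ref{topological structure} says that a \emph{parallelizable} restriction has no dissonant points, whereas what you have is only that $C_i$ is \emph{homeomorphic} to a cylinder. A flow on a cylinder need not be parallelizable (Reeb-type configurations), and "no fixed points, no periodic orbits" does not by itself upgrade the homeomorphism type to parallelizability. The paper closes exactly this gap with a dynamical argument: the circles of $\partial N\cap C_i$ are non-contractible in $C_i$, there are at least two of them, and every orbit leaves the annular region between them in both time directions, forcing all orbits of $C_i$ to be homoclinic and hence excluding dissonant points. A similar, if smaller, issue affects your disk case: the frontier of an open set with $\chi=1$ in a surface need not be "a single connected circle," and even its connectedness (all you actually use) is not established on a non-simply-connected $M$; the paper avoids this by again arguing through the circles of $\partial N\cap C_i$ rather than through $\partial C_i$.
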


\begin{proof}
Suppose that $K$ has no dissonant points. It follows from Alexander duality theorem that 
\[
H_k(\mathcal{I}(K),\mathcal{I}(K)-K)\cong\check{H}^{2-k}(K)
\]
and, since $K$ has finitely generated \v{C}ech cohomology, $\chi(\mathcal{I}(K),\mathcal{I}(K)-K)$ is defined and agrees with $\chi (K)$. Moreover, if $K$ does not have dissonant points then $\mathcal{I}(K)$ and $\mathcal{H}(K)$ are of the same shape, which implies, since $\mathcal{H}(K)$
is non-saddle, that $\chi (\mathcal{I}(K))$ is well defined. As a consequence, $\chi(\mathcal{I}(K)-K)$ is also defined and 
\begin{align*}
\chi (\mathcal{I}(K))=\chi (\mathcal{I}(K),\mathcal{I}(K)-K)+\chi (
\mathcal{I}(K)-K)= \\
\chi (K)+\chi (\mathcal{I}(K)-K).
\end{align*}%
Consider now an isolating block $N$ of $K$ in $M$ of the form $N=N^{+}\cup N^{-}$. We may suppose by \cite{Conley-Easton} that $N$ is a compact surface whose boundary $\partial N$ consists of a finite union of circles. Then there exists a strong deformation retraction of $\mathcal{I}(K)-K$ onto some of those
circles (especifically onto the union of $N^{o}$ with $N^{i}-\mathcal{H}(K)$) and it follows that $\chi (\mathcal{I}(K)-K)=0$. As a consequence, $\chi(\mathcal{I}(K))=\chi(K).$

Conversely, suppose that the Euler characteristic of $\mathcal{I}(K)$ is defined and $\chi(K)=\chi(\mathcal{I}(K))$. Then $\chi(\mathcal{I}(K)-K)=0 $. Moreover, $\mathcal{I}(K)-K$ is a disjoint union of connected orientable surfaces $S_{1},\ldots,S_{n}$, all of them proper open subsets of the closed
surface $M.$ If $S_{i}$ does not contain dissonant points then there exists a strong deformation retraction of $S_{i}$ onto a component of $\partial
N\cap S_{i}$ (a circle) and, thus, $\chi (S_{i})=0$. Now suppose, to get a contradiction, that $S_{i}$ contains dissonant points. Since it is a proper
subset of $M$ \ then $\chi (S_{i})\leq 1$. We will see that the possibilities $\chi (S_{i})=0$ or $1$ are excluded. If $\chi (S_{i})=0$ then
by \cite{Richards} $S_{i}$ is homeomorphic to a punctured open disk $D-\{p\}$. If $C$ is a component of $\partial N\cap S_{i}$ then $C$ is a circle not contractible
in $S_{i}$ (otherwise the disc bounded by $C$ would be positively or negatively invariant by the flow and would contain an invariant set not
influenced by $K $). The intersection $\partial N\cap S_{i}$ consists of more than one component since, otherwise, there would be no homoclinic
orbits in $S_{i}$ and, thus, no dissonant points. The external and the internal components of $\partial N\cap S_{i}$ limit a region $R$ in $D-\{p\}$
homeomorphic to a planar ring. All the trajectories of points of $R$ abandon $R$ in the past and in the future (otherwise there would exist an invariant
set in $R$ not influenced by $K$). This implies that all the trajectories in $S_{i}$ are homoclinic and, hence, there are no dissonant points. A
similar but easier argument excludes the possibility that $\chi (S_{i})=1.$ Hence $\chi (S_{i})<0$ for every surface containing dissonant points. Since $%
\chi (\mathcal{I}(K)-K)=\sum_{i=1}^{n}\chi (S_{i})=0$ and all the surfaces $S_{i}$ without dissonant points are of zero Euler characteristic, we
conclude that there are no dissonant points in $\mathcal{I}(K)-K.$
\end{proof}

The next result establishes a relation between the homoclinic orbits of a plane continuum and the existence of fixed points in its complement.

\begin{lemma}\label{intersection}
Suppose that $K$ is an isolated invariant continuum of a plane flow and $\mathcal{H}(K)-K\neq \emptyset $. Then there exists a fixed point in $\mathbb{R}^{2}$ $-K.$ More {ge\-ne\-ra\-lly}, if $K$ is an isolated invariant continuum of a plane flow and a component $U$ of $\mathbb{R}^{2}$ $-K$
contains a trajectory $\gamma $ such that $\omega (\gamma )\cap K\neq\emptyset $ and $\omega ^*(\gamma )\cap K\neq \emptyset $ then there
exists a fixed point in $U$.
\end{lemma}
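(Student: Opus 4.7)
The plan is to prove the second (more general) statement by contradiction, from which the first will follow by taking $U$ to be the component of $\mathbb{R}^{2}-K$ containing an arbitrary $x\in\mathcal{H}(K)-K$ (both of whose $\omega$- and $\omega^*$-limits are non-empty subsets of $K$). Assuming $U$ contains no fixed points of $\varphi$, I will construct a Jordan curve $J\subset U$ whose bounded complementary disk $D$ lies in $U$ and is flow-invariant, and then Brouwer's fixed-point theorem applied to iterates $\varphi_{1/k}:\overline{D}\to\overline{D}$ (followed by a subsequential-limit argument) will yield a fixed point of the flow in $D\subset U$, contradicting the assumption. As a preliminary, Guti\'errez' theorem lets me assume the flow is $C^{\infty}$, and $U$ is $\varphi$-invariant because $K$ is invariant and the orbit of any point of $U$ is a connected subset of $\mathbb{R}^{2}-K$ meeting $U$. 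Fix $p\in\omega(\gamma)\cap K$, $q\in\omega^{*}(\gamma)\cap K$, and sequences $t_{n}\to+\infty$, $s_{n}\to-\infty$ with $\gamma(t_{n})\to p$, $\gamma(s_{n})\to q$.

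The central step is the construction of $J$. In the generic case, the orbit $\gamma$ crosses a short transversal $\Sigma\subset U$ through a point $\gamma(t_{N})$ close to $p$ at infinitely many times $s_{1}<s_{2}<\cdots$, forced by the flow-box theorem together with $\gamma(t_{n})\to p$; the classical Poincar\'e--Bendixson monotonicity on planar transversals puts the crossing points $\gamma(s_{k})$ in monotone order on $\Sigma$, and taking two consecutive crossings $\gamma(s_{k}),\gamma(s_{k+1})$ gives a Jordan curve $J=\gamma([s_{k},s_{k+1}])\cup\Sigma[\gamma(s_{k+1}),\gamma(s_{k})]\subset U$, with no further crossings of $\gamma$ on the $\Sigma$-sub-arc of $J$. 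In the degenerate \emph{homoclinic-loop} case where $\gamma$ crosses every small transversal near $p$ only once (as happens when $p$ is a hyperbolic fixed point of the flow and $\gamma$ is a separatrix), I form $J$ instead from $\overline{\gamma}=\gamma\cup\{p\}$ when $p=q$, or from $\gamma$ together with a short auxiliary arc in $U$ that shadows a path in $K$ joining $p$ to $q$ when $p\neq q$ (using that $K$ is a continuum). In all cases $J\subset U$, the flow is tangent along the orbit portion, and transverse in a single direction across the auxiliary portion.

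The topological crux is verifying that the bounded complementary disk $D$ satisfies $D\cap K=\emptyset$, and hence $D\subset U$; this is where both hypotheses $\omega(\gamma)\cap K\neq\emptyset$ and $\omega^{*}(\gamma)\cap K\neq\emptyset$ enter essentially. After possibly reversing time, $\overline{D}$ is positively invariant (orbits cannot exit through the tangential $\gamma$-part of $J$ and cross the auxiliary part only inward), so $\omega(\gamma)\subset\overline{D}$ and hence $p\in\overline{D}\cap K\subset D$ since $J\cap K=\emptyset$. On the other hand monotonicity of successive transversal crossings (respectively the one-sided nature of the homoclinic approach) forces the backward tail $\gamma^{-}$ to remain in the exterior of $\overline{D}$, placing $\omega^{*}(\gamma)\subset\mathbb{R}^{2}\setminus D$ and hence $q\notin D$. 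But $K$ is connected and disjoint from $J$, so it lies in a single component of $\mathbb{R}^{2}-J$; since $p\in K\cap D$ forces $K\subset D$, which then forces the impossible $q\in K\subset D$, the only consistent possibility is the non-generic/homoclinic construction where $p,q$ themselves lie on $J$ and $K$ lies in the unbounded component, yielding $D\subset U$. Brouwer's theorem applied to $\varphi_{1/k}:\overline{D}\to\overline{D}$ then produces a fixed point $y^{*}\in\overline{D}\subset U$; since the orbit arc and the auxiliary/transversal arc contain no fixed points, $y^{*}\in D\subset U$, completing the contradiction.

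The main obstacle is Step 3: securing the containment $D\subset U$ and, more subtly, choosing the construction of $J$ so that both hypotheses can be brought to bear simultaneously on the connectedness of $K$. The generic/spiraling construction actually forces a topological contradiction under the joint hypotheses (ruling out that configuration), and pushing the argument through to the homoclinic-loop case requires careful handling of the auxiliary arc near a continuum $K$ that may fail to be locally connected, as well as of the situation in which $p$ or $q$ is a flow fixed point lying on $\partial D$, where one refines the Brouwer step via the classical Poincar\'e--Bendixson induction (a periodic orbit in $D$ would itself bound a sub-disk carrying an interior fixed point).
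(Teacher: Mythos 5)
Your argument never invokes the hypothesis that $K$ is \emph{isolated}, and the lemma is false without it, so there must be a gap; it sits precisely in your homoclinic-loop case. Consider $\dot z=z^{2}$ on $\mathbb{R}^{2}\cong\mathbb{C}$: its only fixed point is the origin, and every orbit off the real axis is a circle through the origin (minus the origin), homoclinic to $0$. Take $K=\{0\}$ (an invariant continuum, but not isolated, since arbitrarily small homoclinic circles are invariant sets accumulating on it), $U=\mathbb{R}^{2}-\{0\}$ and $\gamma$ one of these circles: then $\omega(\gamma)=\omega^{*}(\gamma)=K$, yet $U$ contains no fixed point. Running your proof on this example lands exactly in the homoclinic case: $J=\gamma\cup\{p\}$ with $p=0$, $\overline{D}$ is invariant, and Brouwer applied to $\varphi_{1/k}\colon\overline{D}\to\overline{D}$ returns the fixed point $p\in K\cap\partial D$ rather than a point of $D$; your refinement via Poincar\'e--Bendixson finds neither a periodic orbit nor an interior equilibrium because $D$ is an elliptic sector foliated by homoclinic loops shrinking to $p$. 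So the step ``since the orbit arc and the auxiliary arc contain no fixed points, $y^{*}\in D$'' is where the proof breaks: the only fixed point Brouwer guarantees may be the boundary point $p\in K$, and nothing you say excludes the elliptic-sector configuration. What does exclude it is isolation (small homoclinic loops accumulating on $p$ would be compact invariant sets inside every neighborhood of $K$ but not contained in $K$), and any repair has to bring that hypothesis into play.

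There are secondary gaps as well: the sub-arc of the transversal between two consecutive crossings need not avoid $K$ (so ``$K$ is disjoint from $J$,'' on which the topological contradiction ruling out the generic case rests, is unjustified), and the generic/homoclinic dichotomy is not exhaustive --- $\omega(\gamma)\cap K$ may consist entirely of equilibria of the flow without $\gamma$ closing up into a Jordan curve, e.g.\ when $\gamma$ accumulates on a continuum of fixed points in $K$. For comparison, the paper's proof is structured around the isolating block from the outset: it takes the circle $C\subset\partial N$ in $U$, an exit point $x\in C\cap\gamma$ returning to $C$ at time $t$, and the arc $A\subset C$ between $x$ and $xt$; Poincar\'e--Bendixson forces the midpoint of $A$ to leave and return as well (else a fixed point appears in the disk bounded by $x[0,t]\cup A$), and iterating this bisection produces nested arcs converging to a point of $\partial N$ whose orbit is internally tangent to $\partial N$ --- impossible for an isolating block. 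Isolation is used exactly where your argument is silent.
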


\begin{proof}
 Suppose, to get a contradiction, that there exists a trajectory $\gamma $ in a component $%
U$ of $\mathbb{R}^{2}$ $-K$ such that $U$ does not contain fixed points and $\omega (\gamma )\cap K\neq \emptyset $ and $\omega ^*(\gamma )\cap
K\neq \emptyset .$ Take an isolating block $N$ of $K$. By \cite{Conley-Easton} $N$ can be chosen to be a topological closed disk with $i$ holes, one for every bounded
component of $\mathbb{R}^{2}-K$. We suppose that $U$ is the unbounded component (the argument being only slightly diferent in the other case) and
consider the only circle $C\subset \partial N$ contained in $U$. Then there exists a point $x\in C\cap \gamma $ leaving $N$ and returning to $N$ after a
time $t\neq 0$, i.e. such that $xt\in C$ and $x(0,t)\cap N=\emptyset .$ The possibility that the time $t$ be positive or negative is irrelevant in this
construction. Consider the arc $A$ in $C$ with extremes $x$ and $xt$ such that the topological circle $x[0,t]\cup A$ does not contain $K$ in its
interior. This arc can be mapped to the unit interval $I=[0,1]$ of the real line by a homeomorphism $h:A\rightarrow I$. If we take the point $x_{1}\in 
\mathring{A}$ corresponding to the center of $I$ then $x_{1}$ must leave $N$ (in the past or in the future) and return again since, otherwise, the
Theorem of Poincar\'{e}-Bendixson would imply the existence of a fixed point in the disk limited by $x[0,t]\cup A$. Hence we can repeat the operation
with $x_{1}[0,t_{1}]\cup A_{1}$, where $A_{1}$ is an arc in $A$ with extremes $x_{1}$ and $x_{1}t_{1}$ and the topological circle $%
x_{1}[0,t_{1}]\cup A_{1}$ does not contain $K$ in its interior. Now take $x_{2}\in $ $\mathring{A}_{1}$ corresponding to the middle point of $h(A_{1})$
and repeat the construction. In this way we obtain a sequence $A\supset A_{1}\supset A_{2}\supset\ldots$ of arcs whose intersection $\bigcap_{i=1}^\infty A_{i}$ consists of one point $p\in \partial N$. The orbit of $p$ defines an internal tangency to $\partial N,$ which is in contradiction with the
properties of isolating blocks. This contradiction proves that if $\omega(\gamma )\cap K\neq \emptyset $ and $\omega ^*(\gamma )\cap K\neq \emptyset $ then there exists a fixed point in $U$.
\end{proof}

As a consequence of the last proposition a lower bound for the number of fixed points in the complement of an isolated invariant plane continuum is obtained.

\begin{corollary}
Let $K$ be an isolated invariant continuum of a plane flow. {Suppo\-se} that $\mathbb{R}^{2}$ $-K$ has $i$ connected components. Then, there are at least $%
i-1$ fixed points in $\mathbb{R}^{2}-K.$
\end{corollary}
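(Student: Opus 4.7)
The plan is to show that every bounded component of $\mathbb{R}^{2}-K$ contains at least one fixed point, which yields the desired lower bound. Indeed, since $K$ is an isolated invariant continuum it is in particular a compactum, so exactly one component of $\mathbb{R}^{2}-K$ is unbounded and the remaining $i-1$ are bounded. Being pairwise disjoint, they will contribute pairwise distinct fixed points.

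Fix a bounded component $U$ of $\mathbb{R}^{2}-K$. Since $K$ is invariant, no trajectory can cross $\partial U\subset K$, so $U$ is invariant and every orbit $\gamma(x)\subset U$ is pre-compact; hence $\omega(x)$ and $\omega^{*}(x)$ are non-empty compact invariant subsets of $\overline{U}$. If both $\omega(x)\cap K$ and $\omega^{*}(x)\cap K$ are non-empty, then the general form of Lemma~\ref{intersection} applied to $\gamma(x)$ produces a fixed point in $U$. Otherwise, after possibly reversing the flow, I may assume $\omega^{*}(x)\subset U$. Arguing by contradiction, suppose $U$ contains no fixed point. Then $\omega^{*}(x)$ has no fixed point either, so the planar Poincar\'e--Bendixson theorem identifies $\omega^{*}(x)$ with a periodic orbit $C\subset U$.

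The Jordan curve $C$ bounds an open disk $D$. Because $K$ is connected and disjoint from $C$, either $K\subset D$ or $K\subset\mathbb{R}^{2}\setminus\overline{D}$. The first alternative is impossible: it would force $\mathbb{R}^{2}\setminus\overline{D}$, together with $C$, to be a connected subset of $\mathbb{R}^{2}-K$ reaching infinity, so that $C$ would belong to the unbounded component of $\mathbb{R}^{2}-K$, contradicting $C\subset U$. Hence $K\subset\mathbb{R}^{2}\setminus\overline{D}$, so $\overline{D}\subset\mathbb{R}^{2}-K$, and by connectedness $\overline{D}\subset U$. But any periodic orbit of a planar flow encloses a fixed point (the index of $C$ equals $\pm 1$, which must match the sum of the indices of the enclosed equilibria), producing a fixed point in $D\subset U$, contrary to our assumption. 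Thus $U$ contains a fixed point.

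The main obstacle in the argument is the Jordan-curve step placing the disk $D$ bounded by the periodic orbit entirely inside $U$: this uses both the connectedness of $K$ (so that $K$ lies on one definite side of $C$) and the boundedness of $U$ (to rule out that $K$ sits inside $C$, which would put $C$ in the unbounded component). Once this geometric step is secured, the proof combines Lemma~\ref{intersection} in the homoclinic case with Poincar\'e--Bendixson plus the classical index formula for planar periodic orbits in the remaining case.
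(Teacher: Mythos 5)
Your proof is correct and follows essentially the same route as the paper's: reduce to showing each of the $i-1$ bounded components of $\mathbb{R}^{2}-K$ contains a fixed point, apply Lemma~\ref{intersection} when the trajectory accumulates on $K$ in both time directions, and otherwise invoke Poincar\'e--Bendixson to produce a periodic orbit whose interior lies in the component and hence encloses a fixed point there. You merely supply more detail than the paper does at the step where the disk bounded by the periodic orbit is shown to lie inside $U$.
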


\begin{proof}
 We see that, in fact, there is at least one fixed point in every bounded component $U$ of $\mathbb{R}^{2}$ $-K$. Otherwise, if $\gamma $ is a
trajectory in the bounded component $U,$ where $U$ does not contain fixed points, then by Lemma~\ref{intersection} either $\omega(\gamma)\cap K=\emptyset$ or $\omega^*(\gamma)\cap K=\emptyset$. Hence, Poincar\'e-Bendixson Theorem ensures the existence of a periodic orbit contained in $U$, and thus the existence of a fixed point in its interior. This leads to a contradiction with the assumption, since the interior of any periodic orbit contained in $U$ is also contained in $U$. The number of bounded components of $\mathbb{R}^2-K$ is exactly $i-1$, so this contradiction establishes the corollary.
\end{proof}

In \cite{Barge-Sanjurjo} it was proved that the non-existence of fixed points in an isolated invariant continuum $K$ of a planar flow is a sufficient condition for $K$ to be non-saddle. We see in the next result that an {a\-ssump\-tion} about the region of influence of an isolated invariant plane continuum is also  sufficient to guarantee its non-saddleness.

\begin{theorem}\label{plane}
Let $K$ be an isolated invariant continuum of a flow on $\mathbb{R}^{2}$ and suppose that $\mathcal{I}(K)=\mathbb{R}^{2}.$ Then $K$ is non-saddle and, as
a consequence, a stable global attractor or a negatively stable global repeller.
\end{theorem}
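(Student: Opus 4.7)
The strategy is to reduce the statement to non-saddleness of $K$; once that is established, the conclusion that $K$ is a stable global attractor or negatively stable global repeller is immediate from the Euclidean corollary at the end of Theorem~\ref{structure}. All the work is therefore to show that the hypothesis $\mathcal{I}(K)=\mathbb{R}^2$ forces $K$ to be non-saddle.

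I would first collect three elementary observations. (i) $\mathbb{R}^2\setminus K$ contains no fixed points, since a fixed $p\notin K$ would satisfy $\omega(p)=\omega^\ast(p)=\{p\}\not\subset K$, contradicting $p\in\mathcal{I}(K)=\mathbb{R}^2$. (ii) Lemma~\ref{intersection} then forces the absence of homoclinic orbits: any $x\in\mathcal{H}(K)\setminus K$ would supply a fixed point in the component of $\mathbb{R}^2\setminus K$ containing $\gamma(x)$. Consequently $\mathcal{H}(K)=K$ is closed in $\mathbb{R}^2$, and Proposition~\ref{closure} gives that $K$ has no dissonant points; in addition the decomposition $\mathbb{R}^2\setminus K=\mathcal{A}^\ast(K)\sqcup\mathcal{R}^\ast(K)$ holds.

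The main step is to derive a contradiction from the assumption that $K$ is saddle by exhibiting a dissonant point. Choose a neighborhood $U$ of $K$ and a sequence $x_n\to K$ with $\gamma^+(x_n),\gamma^-(x_n)\not\subset U$, and fix an isolating block $N\subset U$. After reversing time if necessary (all hypotheses are time-symmetric) and extracting a subsequence, assume $x_n\in\mathcal{A}^\ast(K)$. Let $t_n>0$ be the first forward exit of $\gamma^+(x_n)$ from $N$ and put $y_n=x_n t_n\in N^o$; invariance of $K$ together with $x_n\to K$ forces $t_n\to\infty$, and compactness of $N^o$ gives $y_n\to y\in N^o$ along a subsequence. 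For each fixed $s\ge 0$ and $n$ with $t_n\ge s$ one has $y_n(-s)=x_n(t_n-s)\in N$ by the choice of $t_n$; letting $n\to\infty$ yields $\gamma^-(y)\subset N$, so that $y\in N^-$ and $\emptyset\neq\omega^\ast(y)\subset K$. Since $y\in\partial N$ lies outside $K$ and no homoclinic orbits exist, $y\in\mathcal{R}^\ast(K)=\mathcal{R}(K)\setminus\mathcal{A}(K)$. On the other hand, invariance of $\mathcal{A}^\ast(K)$ puts each $y_n$ in $\mathcal{A}^\ast(K)$; picking $\sigma_n\ge n$ with $d(y_n\sigma_n,K)<1/n$ (possible because $\omega(y_n)\subset K$ is non-empty) and extracting a convergent subsequence $y_n\sigma_n\to w$ gives $w\in J^+(y)\cap K$. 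Combined with $y\in\mathcal{R}(K)\setminus\mathcal{A}(K)$, this exhibits $y$ as a positively dissonant point of $K$, contradicting the absence of dissonant points obtained above. The main obstacle I anticipate is verifying $y\in\mathcal{R}^\ast(K)$ (rather than $y\in K$ or $y$ being homoclinic), which is precisely where the absence of homoclinic orbits, and hence of dissonant points, plays its decisive role.
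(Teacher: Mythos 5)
Your reductions are sound up to the last step: the absence of fixed points in $\mathbb{R}^{2}-K$, the absence of homoclinic orbits via Lemma~\ref{intersection}, and the construction of the point $y\in N^{o}$ with $\gamma^{-}(y)\subset N$, $\emptyset\neq\omega^{*}(y)\subset K$ and $J^{+}(y)\cap K\neq\emptyset$ all check out, and this $y$ is essentially the point $y_{0}$ with which the paper's own proof begins. The gap is in how you close the argument. You declare $y$ positively dissonant and invoke ``the absence of dissonant points obtained above,'' but that absence was derived from Proposition~\ref{closure}, which (like Proposition~\ref{topol} on which its proof rests, via the openness of $\mathcal{H}(K)-K$ and the strong attraction/repulsion machinery) is established only for \emph{non-saddle} sets --- precisely what you are trying to prove. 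Under your contradiction hypothesis that $K$ is saddle, none of that applies, so exhibiting a point of $\mathcal{R}^{*}(K)$ in the closure of $\mathcal{A}^{*}(K)$ with $J^{+}(y)\cap K\neq\emptyset$ contradicts nothing: it is a configuration perfectly consistent with saddleness. Note also that your witnesses $y_{n}$ lie in $\mathcal{A}^{*}(K)$, not in $\mathcal{H}(K)$, so even the ``readily seen'' direction of Proposition~\ref{closure} (dissonant $\Rightarrow$ boundary of $\mathcal{H}(K)$) gives you nothing here, since $\mathcal{H}(K)=K$ and $y\notin K$.

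What is missing is the genuinely two-dimensional part of the paper's proof, which converts your configuration into something that \emph{does} contradict a hypothesis-independent fact. The paper takes two consecutive exit points $y_{0},y_{1}$ on an arc $A\subset N^{o}$, an auxiliary block $N_{1}\subset\mathring N$, and forms a Jordan curve from $\gamma^{-}(y_{0})$ down to $\partial N_{1}$, an arc $J\subset A$ from $y_{0}$ to $y_{1}$, $\gamma^{+}(y_{1})$ down to $\partial N_{1}$, and an arc of $\partial N_{1}$. Since $\mathring J$ is an entrance or exit set for the enclosed disk $D$ and Poincar\'e--Bendixson forbids a semi-orbit from staying in $D$ (there are no fixed points there), one of $\gamma^{+}(y_{0})$, $\gamma^{-}(y_{1})$ must reach $\partial N_{1}$; letting $N_{1}$ shrink to $K$ yields $\omega(y_{0})\cap K\neq\emptyset$ or $\omega^{*}(y_{1})\cap K\neq\emptyset$, so Lemma~\ref{intersection} produces a fixed point in $\mathbb{R}^{2}-K$, contradicting $\mathcal{I}(K)=\mathbb{R}^{2}$. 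Your proof needs this step (or a substitute yielding an orbit whose $\omega$- and $\omega^{*}$-limits both meet $K$); as written, the final contradiction is circular.
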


\begin{proof}
 Suppose, to get a contradiction, that $K$ is saddle. Then there exist an isolating block $N$
which, by the previous corollary can be taken to be a topological closed disk, and a sequence of points $x_{n}\rightarrow K$ whose trajectories leave $N$ in the future
and in the past. Denote by $y_{n}$ and $z_{n}$ the corrresponding exit points (in the future and in the past respectively). We may assume that $y_{n}\rightarrow y_{0}\in \partial N$ and $z_{n}\rightarrow z_{0}\in\partial N$. A simple argument proves that $\emptyset\neq\omega ^*(y_{0})\subset K$
and $\emptyset\neq\omega (z_{0})\subset K$. Since $\mathcal{I}(K)=\mathbb{R}^{2}$ then we may suppose that either $\emptyset\neq\omega (y_{n})\subset K$ for almost all $n$ or $º\emptyset\neq\omega^*(y_{n})\subset K$ for almost all $n$; we consider the first situation and suppose, for the sake of simplicity in notation, that $\emptyset\neq\omega (y_{n})\subset K$ for $n\geq 1$. We may also assume that all points $y_{n}$ and $y_{0}$ are contained in an arc $A\subset N^{o}$ with no tangency points. Consider now an arbitrary isolating block $N_{1}\subset \mathring{N}$ \ which is also a disk. We can define a topological circle $C$, not having $K$ in its interior, consisting of the union of the following sets: a) the trajectory $\gamma ^{-}(y_{0})$ until it reaches $\partial N_{1}$ in a point $a$, b) an arc $J$ $\subset A\subset N^{o}$,
linking $y_{0}$ to $y_{1}$, c) the trajectory $\gamma ^{+}(y_{1})$ until it reaches $\partial N_{1}$ in a point $b$ and d) an arc in $\partial N_{1}$
linking $a$ to $b$. Denote by $D$ the disk bounded by $C$.
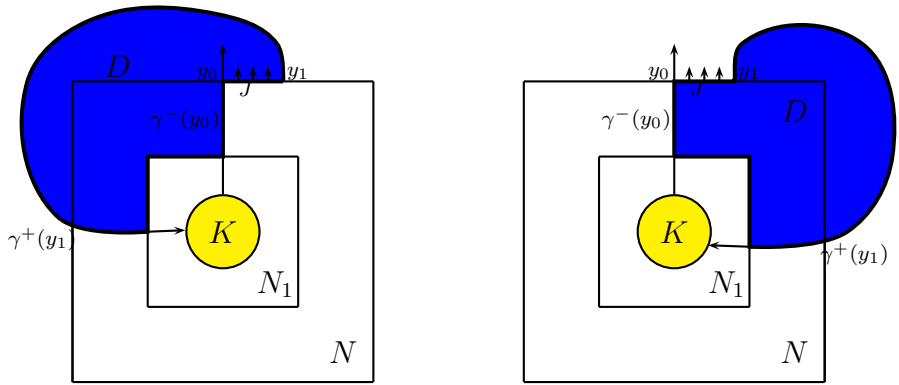
\begin{figure}[h]
\begin{pspicture}(-3,-2)(11,3)

\psline(-2,-2)(2,-2)
\psline(-2,-2)(-2,2)
\psline(2,-2)(2,2)
\psline(-2,2)(2,2)
\psline(-1,-1)(-1,1)
\psline(-1,-1)(1,-1)
\psline(-1,1)(1,1)
\psline(1,-1)(1,1)
\pscustom[fillstyle=solid,fillcolor=yellow]{
\pscircle(0,0){0.5}}

\psline(4,-2)(8,-2)
\psline(4,-2)(4,2)
\psline(8,-2)(8,2)
\psline(4,2)(8,2)
\psline(5,-1)(5,1)
\psline(5,-1)(7,-1)
\psline(5,1)(7,1)
\psline(7,-1)(7,1)
\pscustom[fillstyle=solid,fillcolor=yellow]{
\pscircle(6,0){0.5}}


\psline{->}(6,0.5)(6,2.5)
\psline{->}(6.2,2)(6.2,2.2)
\psline{->}(6.4,2)(6.4,2.2)
\psline{->}(6.6,2)(6.6,2.2)

\pscustom[linewidth=1.5pt,fillstyle=solid,fillcolor=blue]{
\psline(6,1)(6,2)
\psline(6,2)(6.8,2)
\pscurve(6.8,2)(6.9,2.5)(8.5,2.5)(8.9,1)(8.3,0)(7,-0.2)  
\psline(7,-0.2)(7,1)   
\psline(7,1)(6,1)
}
\psline{->}(7,-0.2)(6.45,-0.18)

\psline(8,-2)(8,2)
\psline(6,2)(8,2)


\pscustom[linewidth=1.5pt,fillstyle=solid,fillcolor=blue]{
\pscurve(0.8,2)(0.72,2.5)(-0.1,2.9)(-1.8,2.8)(-2.6,2)(-2.2,.2)(-1,0)
\psline(-1,0)(-1,1)
\psline(-1,1)(0,1)
\psline(0,1)(0,2)
\psline(0,2)(0.8,2)
}

\psline{->}(-1,0)(-0.5,0.02)
\psline(-2,-2)(-2,2)
\psline(-2,2)(0,2)
\psline{->}(0,0.5)(0,2.5)
\psline{->}(0.2,2)(0.2,2.2)
\psline{->}(0.4,2)(0.4,2.2)
\psline{->}(0.6,2)(0.6,2.2)


\rput(-0.2,2.1){\tiny $y_0$}
\rput(0.3,1.9){\tiny $J$}
\rput(1,2.1){\tiny $y_1$}
\rput(-0.5,1.5){\tiny $\gamma^-(y_0)$}
\rput(-2.4,-0.1){\tiny $\gamma^+(y_1)$}
\rput(0,0){$K$}
\rput(1.6,-1.6){$N$}
\rput(0.7,-0.7){$N_1$}
\rput(-1.4,2.2){$D$}

\rput(5.8,2.1){\tiny $y_0$}
\rput(6.3,1.9){\tiny $J$}
\rput(7,2.1){\tiny $y_1$}
\rput(5.5,1.5){\tiny $\gamma^-(y_0)$}
\rput(8.4,-0.3){\tiny $\gamma^+(y_1)$}
\rput(6,0){$K$}
\rput(7.6,-1.6){$N$}
\rput(6.7,-0.7){$N_1$}
\rput(7.6,1.6){$D$}
\end{pspicture}
\caption{The region $D$}
\end{figure}

 Then $\mathring{J}$ is either an exit set or an entrance set for $D.$ If $\mathring{J}$ is an exit set then either $\gamma^{-}(y_{1})$ is contained in $D$ or there exists a point of $\gamma^{-}(y_{1})$ contained in $\partial N_{1}$. It is not difficult to see, using the Poincar\'{e} Bendixson Theorem that the first case is impossible. If $\mathring{J}$ is an entrance set for $D$ then either $\gamma ^{+}(y_{0})$ is contained in $D$ or there exists a point of $\gamma^{+}(y_{0})$ contained in $\partial N_{1}$. The first case is impossible as before. We conclude that either there exists a point of $\gamma ^{-}(y_{1})$ contained in $\partial N_{1}$ or there exists a point of $\gamma ^{+}(y_{0})$ contained in $\partial N_{1}$. If we repeat this construction for a sequence of isolating neighborhoods $N_{i}$ shrinking to $K$ then we get that there exists a point of $K$ in the $\omega$-limit of $y_{0}$ or a point of $K$ in the $\omega ^*$-limit of $y_{1}.$ Then, as a consequence of Proposition~\ref{intersection}, there would exist a fixed point in $\mathbb{R}^{2}-K$, which cannot be in the region of influence of $K$. It follows from this contradiction that $K$ is non-saddle and, as a consequence, a global atractor or a global repeller.
\end{proof}

A direct consequence of Theorem~\ref{plane} is the following result from \cite{Moron}.

\begin{corollary}[Mor\'on, S\'anchez-Gabites and Sanjurjo  \cite{Moron}]
Let $K$ be an isolated invariant continuum of a flow on $\mathbb{R}^2$ and suppose that $\mathcal{A}(K)=\mathbb{R}^2$. Then $K$ is stable and, thus, a global attractor.
\end{corollary}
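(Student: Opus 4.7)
The plan is to derive this as a direct consequence of Theorem~\ref{plane} together with Lemma~\ref{intersection}, by ruling out the repeller alternative via a fixed-point obstruction.

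First, I observe that the hypothesis $\mathcal{A}(K)=\mathbb{R}^{2}$ immediately yields $\mathcal{I}(K)=\mathcal{A}(K)\cup\mathcal{R}(K)=\mathbb{R}^{2}$. Theorem~\ref{plane} therefore applies, and $K$ is either a stable global attractor or a negatively stable global repeller. The corollary will follow as soon as I exclude the second possibility.

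So suppose, for contradiction, that $K$ is a negatively stable global repeller. Then $\mathcal{R}(K)=\mathbb{R}^{2}$, so every $x\in\mathbb{R}^{2}$ satisfies $\emptyset\neq\omega^{*}(x)\subset K$; combining this with $\mathcal{A}(K)=\mathbb{R}^{2}$, every $x\in\mathbb{R}^{2}$ also satisfies $\emptyset\neq\omega(x)\subset K$. Since $K\subsetneq\mathbb{R}^{2}$ by our standing convention, the set $\mathcal{H}(K)-K$ is nonempty: any $x\in\mathbb{R}^{2}-K$ is homoclinic. Lemma~\ref{intersection} then furnishes a fixed point $p\in\mathbb{R}^{2}-K$.

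This produces the contradiction I want: since $p$ is a fixed point outside $K$, its omega-limit is $\omega(p)=\{p\}$, which is not contained in $K$. Hence $p\notin\mathcal{A}(K)$, contradicting $\mathcal{A}(K)=\mathbb{R}^{2}$. The repeller case is impossible, so $K$ is a stable global attractor. I do not foresee any real obstacle here; the only subtlety is making sure to invoke Lemma~\ref{intersection} on the correct trajectory, and the contradiction via a fixed point in $\mathbb{R}^{2}-K$ is automatic once the homoclinic point is produced.
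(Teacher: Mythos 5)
Your argument is correct and follows exactly the route the paper intends: the paper offers no written proof beyond the remark that the corollary is ``a direct consequence of Theorem~\ref{plane}'', and you carry out precisely that derivation, noting $\mathcal{I}(K)\supseteq\mathcal{A}(K)=\mathbb{R}^2$ and then eliminating the repeller alternative. Your use of Lemma~\ref{intersection} to rule out the repeller case (every point of $\mathbb{R}^2-K$ would be homoclinic, forcing a fixed point outside $K$ that cannot be attracted to $K$) is a clean and valid way to fill in the detail the paper leaves implicit.
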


\section{Robustness of non-saddle sets}

It was shown in \cite{GS Top} that the property of being non-saddle is not robust, i.e. it is not preserved by continuation of
isolated invariant sets. However, it turns out that there exist some relations between the preservation of certain topological properties by continuation and the preservation of the dynamical property of non-saddleness. As a matter of fact, in some situations both properties are equivalent, as we see in the
following result. We introduce first the necessary definitions.

\begin{definition} 
Suppose $\varphi_\lambda:M\times\mathbb{R}\to M$ is a parametrized family of flows (parametrized by $\lambda\in I$, the unit interval) in
a locally compact ANR, $M$, and suppose that $K_0$ is an isolated non-saddle set for $\varphi_0$. We say that $K_0$ is \emph{{dy\-na\-mi\-ca\-lly} robust} if
for every isolating neighborhood $N$ of $K_0$ there exists $\delta > 0$ such that, for every $\lambda\in [0, \delta)$, the isolated invariant subset $K_\lambda$ of
$N$ (with respect to the flow $\varphi_\lambda$) which has $N$ as an isolating neighborhood is a (non-empty) non-saddle set.
\end{definition}

By \cite[Lemma~6.2]{Salamon}, we have that $K_0$ is dinamically robust if and only if there exist an isolating neighborhood $N$ of $K_0$ and a $\delta > 0$ such that, for every $\lambda\in[0, \delta)$, the isolated invariant subset $K_\lambda$ of $N$ (with respect to the flow $\varphi_\lambda$) which has $N$ as an isolating neighborhood is a (non-empty) non-saddle set.

\begin{definition}
Suppose $\varphi_\lambda:M\times\mathbb{R}\to M$ is a parametrized family of flows (parametrized by $\lambda\in I$, the unit interval) in
a locally compact ANR, $M$, and suppose that $K_0$ is an isolated invariant set for $\varphi_0$. We say that $K_0$ is \emph{topologically} robust
if for every isolating neighborhood $N$ of $K_0$ there exists $\delta>0$ such that, for every $\lambda\in [0, \delta)$, the isolated invariant subset $K_\lambda$
of $N$ (with respect to the flow $\varphi_\lambda$) which has $N$ as an isolating neighborhood has the same shape as $K_0$.  
\end{definition}

By \cite[Lemma~6.2]{Salamon}, we have that $K_0$ is topologically robust if and only if there exist an isolating
neighborhood $N$ of $K_0$ and a $\delta > 0$ such that, for every $\lambda\in[0, \delta)$, the isolated invariant subset $K_\lambda$ of $N$ (with respect to the
flow $\varphi_\lambda$) which has $N$ as an isolating neighborhood has the same shape as $K_0$.

Note that when a non-saddle set is dynamically robust, this fact implies the existence of a (local) continuation made
of non-saddle sets. On the other hand, if an isolated invariant set is topologically robust, then it has a (local) continuation
whose members have the same shape.

\begin{theorem}\label{robustness}
Let $\varphi _{\lambda },$ with $\lambda \in \lbrack 0,1]$, be a differentiable parametrized family of flows on a closed and connected orientable differentiable manifold $M$ with $H^{1}(M)=\{0\}$ and $K_{0}$ be a connected isolated non-saddle set. Then $K_0$ is dinamically robust if and only if it is topologically robust.
\end{theorem}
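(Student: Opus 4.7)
The overall strategy combines three ingredients: (i) the continuation invariance of the Conley index, which forces $h^{\pm}(K_\lambda)\cong h^{\pm}(K_0)$ for all small $\lambda$; (ii) the fact, recalled in the paper after Proposition~\ref{neighborhood}, that for a non-saddle set $K$ admitting an isolating block of the form $N^+\cup N^-$ the inclusion $K\hookrightarrow N^+\cup N^-$ is a shape equivalence; and (iii) the rigidity supplied by Theorem~\ref{structure}: under $H^{1}(M)=\{0\}$ a connected non-saddle set has no dissonant points, each component of $\partial N$ sits inside a distinct component of $M-K$, and the flow on every such component is either locally attracted or locally repelled by $K$.

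For the direction \emph{dynamically robust implies topologically robust} I would fix an isolating block $N=N^+\cup N^-$ of $K_0$ of the form provided by Theorem~\ref{structure}. By Salamon's lemma and the dynamical robustness hypothesis, $K_\lambda$ is, for $\lambda\in[0,\delta)$, the maximal invariant set of $\varphi_\lambda$ in $N$ and is itself non-saddle, so Proposition~\ref{neighborhood} supplies an isolating block $M_\lambda=M_\lambda^+\cup M_\lambda^-\subset N$ for $K_\lambda$. Ingredient (ii) applied to both $K_0$ and $K_\lambda$ reduces the shape coincidence $\mathrm{Sh}(K_\lambda)=\mathrm{Sh}(K_0)$ to the homotopy equivalence of the compact ANRs $M_\lambda$ and $N$. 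I would establish this via the Mayer--Vietoris squares $N=N^+\cup N^-$, $N^+\cap N^-=K_0$, and $M_\lambda=M_\lambda^+\cup M_\lambda^-$, $M_\lambda^+\cap M_\lambda^-=K_\lambda$, using that the flow produces strong deformation retractions $N^{\pm}\to K_0$ and $M_\lambda^{\pm}\to K_\lambda$. The Conley-index isomorphism (i) matches the boundary pieces across $\lambda$, and Theorem~\ref{structure} ensures that the combinatorial assignment of the components of $\partial N$ (resp.\ $\partial M_\lambda$) to the components of $M-K_0$ (resp.\ $M-K_\lambda$) is consistent, allowing the pieces to be glued into a homotopy equivalence $M_\lambda\simeq N$.

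For the converse, assume that $K_\lambda$ has the same shape as $K_0$; in particular $K_\lambda$ is connected, and by Alexander duality in the closed oriented manifold $M$ the number of components of $M-K_\lambda$ equals that of $M-K_0$. I would split into cases. If $K_0$ is an attractor or a repeller, the very restricted form of its Conley index is preserved by continuation and, together with the shape coincidence, this forces $K_\lambda$ to be again of the same type, hence non-saddle. If $K_0$ is neither, Theorem~\ref{structure} ensures that $K_0$ genuinely disconnects its isolating block into two local components, and the component-count preservation transfers this local disconnection to $K_\lambda$; combined with the continuation isomorphism $CH_{\pm}^{*}(K_\lambda)\cong CH_{\pm}^{*}(K_0)$ and a long-exact-sequence computation in the isolating block that makes $CH_{\pm}^{1}$ vanish in this case, Theorem~\ref{conley} applies to $K_\lambda$ and yields its non-saddleness. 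The main obstacle throughout is the passage from the purely Conley-theoretic data, which continuation preserves automatically, to the global shape and embedding data of the invariant set in $M$; this gap is bridged precisely by the hypothesis $H^{1}(M)=\{0\}$ via Theorem~\ref{structure}.
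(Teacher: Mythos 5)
Your proposal diverges from the paper in both directions, and the divergences hide genuine gaps. For the implication ``dynamically robust $\Rightarrow$ topologically robust'' the paper simply invokes \cite{GS Top}; your attempted reproof rests on two unavailable ingredients: the flow does \emph{not} in general provide strong deformation retractions $N^{\pm}\to K_0$ (the inclusion $K_0\hookrightarrow N^{+}\cup N^{-}$ is only a \emph{shape} equivalence, which is precisely why shape theory enters the paper), and a Mayer--Vietoris comparison can at best match (co)homology groups --- it cannot produce the homotopy (or shape) equivalence $M_\lambda\simeq N$ that you need to conclude.

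The converse is the direction the paper actually proves, and your route through Theorem~\ref{conley} does not go through. First, Theorem~\ref{structure} does not say that $K_0$ disconnects a neighborhood into \emph{two} components: $M-K_0$ may have any finite number of components, each locally attracted or locally repelled, so the hypothesis of Theorem~\ref{conley} need not hold. Second, $CH^{1}_{\pm}(K_\lambda)$ need not vanish: if, say, $N^{o}$ is disconnected (which happens as soon as $K_0$ repels into two different components of $M-K_0$), the exact sequence of the pair $(N,N^{o})$ forces $\check{H}^{1}(N,N^{o})\neq\{0\}$. Third, in the attractor/repeller case the Conley index together with the shape of $K_\lambda$ does not by itself force $K_\lambda$ to be of the same type; what saves that case is the persistence of a block $N$ with $N^{o}=\emptyset$ (resp. $N^{i}=\emptyset$), not index-theoretic data. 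The paper's actual argument is a single counting step that your proposal is missing: since $H^{1}(M)=\{0\}$, the exact sequence of $(M,M-K_\lambda)$ combined with Alexander duality and the Universal Coefficient Theorem identifies $\widetilde{H}^{0}(M-K_\lambda)$ with the shape invariant $\check{H}^{n-1}(K_\lambda)$, so all $M-K_\lambda$ have the same number of components; by \cite{GS Top} a fixed block $N$ of $K_0$ serves for all small $\lambda$ with the same tangency-free entrance and exit sets, and by Theorem~\ref{structure} distinct components of $\partial N$ lie in distinct components of $M-K_0$. If some $K_\lambda$ were saddle, an orbit segment running from $N^{i}$ to $N^{o}$ would place two components of $\partial N$ in a single component of $M-K_\lambda$, strictly decreasing the component count --- a contradiction. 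You should replace your case analysis by this argument.
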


\begin{proof}
It has been proved in \cite{GS Top} that dynamical robustness implies topological robustness. We will prove here the converse implication. Suppose that $K_0$ is topologically robust, i.e that $Sh(K_{\lambda })=Sh(K_{0})$ for $\lambda < \delta.$ We see that since $H^{1}(M)=\{0\}$ then all $M-K_{\lambda }$ have the same number of components. Consider the terminal part of the cohomology long exact sequence
\[
\ldots\leftarrow H^{1}(M)=\{0\}\leftarrow H^{1}(M,M-K_{\lambda })\leftarrow 
\widetilde{H}^{0}(M-K_{\lambda })\leftarrow \widetilde{H}^{0}(M)=\{0\}.
\]

Then $H^{1}(M,M-K_{\lambda })\cong \widetilde{H}^{0}(M-K_{\lambda })$ and $H^{1}(M,M-K_{\lambda })$ is free and finitely generated and, by the Universal Coefficient Theorem, $\rank H^{1}(M,M-K_{\lambda })=\rank H_{1}(M,M-K_{\lambda })$. Moreover, by Alexander duality, $H_{1}(M,M-K_{\lambda})\cong
\check{H}^{n-1}(K_{\lambda })$ and, since \v{C}ech cohomology is a shape invariant, we have that all $\widetilde{H}^{0}(M-K_{\lambda })$ are the
same for $\lambda < \delta$ and, hence, all $M-K_{\lambda }$ have the same number of components.

Consider a connected isolating block $N$ of $K_{0}$ such that $N$ is a differentiable manifold. It was proved in \cite{GS Top} that $N$ is also an isolating block for all $K_{\lambda }$ with $\lambda $ sufficiently close to $0$ (we may assume that this happens for all $\lambda < \delta$). It was also proved that the entrance and exit sets for $\varphi _{\lambda }$ are strict (i.e. without tangencies) and that they agree with those for $\varphi_{0}.$ Moreover, in Theorem~\ref{structure} it was proved that different components of $\partial N$ lie in different components of $M-K_{0}$. Suppose now, to get a contradiction, that $K_{\lambda }$ is saddle for\ some $\lambda < \delta$. Then there are sequences $x_{n}\rightarrow x\in K_{\lambda },$ $s_n<0<t_{n}$ such that $x_{n}t_{n}\in N^{0}$ and $x_{n}s_{n}\in N^{i}$. This implies that the component of $M-K_{\lambda }$ containing $x_{n}$ contains points of $N^{0}$ and $N^{i}$ and, hence, that there are two components, or more, of $\partial N$ in the component of $M-K_{\lambda }$ which contains $x_n.$ We deduce from this that the number of components of $M-K_{\lambda }$ is less than the number of components of $M-K_{0}$. We get from this contradiction that all $K_{\lambda }$ are non-saddle for $\lambda< \delta$.
\end{proof}

\begin{remark}
It is straightforward to see that Theorem~\ref{robustness} also holds in the case of differentiable parametrized families of flows on $\mathbb{R}^n$, $n\geq 2$. 
\end{remark}

\end{document}